\documentclass{amsart}

\usepackage{bm}
\usepackage{amssymb,amsthm}
\usepackage{thmtools}
\usepackage{mathtools}
\usepackage{bbm} 
\usepackage{hyperref}
\usepackage[capitalise]{cleveref}

\declaretheorem[style=plain, name=Theorem, numberwithin=section]{theorem}
\declaretheorem[style=plain, name=Lemma, sibling=theorem]{lemma}
\declaretheorem[style=plain, name=Proposition, sibling=theorem]{proposition}
\declaretheorem[style=plain, name=Corollary, sibling=theorem]{corollary}

\declaretheorem[style=definition, name=Definition, sibling=theorem]{definition}

\declaretheorem[style=remark, name=Remark, sibling=theorem]{remark}
\declaretheorem[style=remark, name=Notation, sibling=theorem]{notation}

\newcommand{\cantor}{2^{\omega}}
\newcommand{\baire}{\omega^{\omega}}

\newcommand{\bPi}[2]{\bm{\Pi}^{#1}_{#2}}

\newcommand{\lPi}[2]{\Pi^{#1}_{#2}}
\newcommand{\lDelta}[2]{\Delta^{#1}_{#2}}

\newcommand{\upairs}{2^{<\omega} \times \omega^{<\omega}}
\newcommand{\cl}{\mathrm{cl}}

\newcommand{\open}[1]{[#1]^{\prec}}
\newcommand{\laver}{\mathbb{L}}

\newcommand{\concat}[2]{#1^{\smallfrown}#2}
\newcommand{\stem}[1]{\mathrm{stem}(#1)}

\newcommand{\T}{\mathrm{T}}

\newcommand{\ideal}[1]{\mathcal{#1}}
\newcommand{\ock}{\omega^{\mathrm{CK}}_1}

\newcommand{\atr}{\ensuremath{\mathsf{ATR}_0}}

\DeclareMathOperator{\dom}{dom}

\newcommand{\upto}{{\upharpoonright}}

\newcommand{\nl}[2]{\mathrm{NL}^{#1}(#2)}
\newcommand{\stm}[1]{\tau^{\bm{#1}}}
\newcommand{\tree}[1]{T^{\bm{#1}}}
\newcommand{\fun}[1]{f^{\bm{#1}}}
\newcommand{\mindom}[1]{\sigma^{\bm{#1}}}
\newcommand{\bad}[1]{B^{\bm{#1}}}

\newcommand{\cohen}{\mathbb{C}}
\newcommand{\hechler}{\mathbb{H}}

\newcommand{\hechlerb}{\mathbb{H}_{\mathsf{B}}}
\newcommand{\iter}{\mathbb{I}}
\newcommand{\bushy}{\mathbb{O}}
\newcommand{\hyp}{\mathsf{HYP}}
\newcommand{\lhyp}{\leq_{\hyp}}

\newcommand{\SME}[1]{#1\text{-}\mathsf{SME}}
\newcommand{\SNE}[1]{#1\text{-}\mathsf{SNE}}
\newcommand{\domm}[1]{#1\text{-}\mathsf{DOM}}

\newcommand{\wpdom}{\mathsf{Dom}}
\newcommand{\wpesc}{\mathsf{Esc}}
\newcommand{\spill}[1]{\mathsf{Spill}(\mathcal{#1})}
\newcommand{\capture}[1]{\mathsf{Capture}(\mathcal{#1})}

\newcommand{\force}{\Vdash}
\newcommand{\vphi}{\varphi}
\DeclareMathOperator{\rk}{rk}

\newcommand{\seq}[1]{{\left\langle{#1}\right\rangle}}
\newcommand{\smallseq}[1]{\langle{#1}\rangle}
\newcommand{\rooot}{\seq{}}

\newcommand{\converge}{\!\!\downarrow}

\author{Noam Greenberg}
\address{School of Mathematics and Statistics\\
Victoria University of Wellington\\
Wellington\\
New Zealand}
\email{\href{mailto:greenberg@msor.vuw.ac.nz}{greenberg@sms.vuw.ac.nz}}

\author{Gian Marco Osso}
\address
{Dipartimento di Scienze Matematiche, Informatiche e Fisiche\\
	Universit\`a di Udine\\
	33100 Udine\\
	Italy}
\email{\href{mailto:osso.gianmarco@spes.uniud.it}{osso.gianmarco@spes.uniud.it}}

\thanks{The authors would like to thank Jun Le Goh, Alberto Marcone, Joseph Miller, Isabella Scott, Mariya Soskova and Dan Turetsky for useful conversations on the contents of this paper.}
\date{\today}

\title{Forcing and classes of $\hyp$-dominating functions}
\begin{document}

	\maketitle
	
	\begin{abstract}
		We study the question, what computational power is sufficient to perform constructions using either Laver or Hechler forcing. As a result, we obtain a separation between three relativised non-lowness classes that are the computability-theoretic analogues of three of the cardinals in Cichon's diagram. 
	\end{abstract}

\section{Introduction}
	A Turing degree bounds a weakly 1-generic real if and only if it is hyperimmune, that is, computes a function that escapes all computable functions. One direction of this equivalence is immediate: thought of as an element of Baire space, a 1-generic real \emph{is} a function that escapes domination by computable functions. The other direction uses a particular property of codes of the dense sets that we need to meet, namely, they are computably enumerable (c.e.), and there is a uniform enumeration of all c.e.\ sets.  Intuitively, we feel that the first implication is more ``necessary'' than the other. 

	A way of making this intuition precise is examining the situation in other contexts. At the far end of the spectrum, we can consider domination and genericity over a ground model of set theory. Here we have a separation: every Cohen generic function escapes domination by any function in the ground model; however, there are generic extensions containing such escaping functions, in which there are no Cohen generics. Such extensions are given for example by forcing with rational perfect trees (Miller forcing). 

	A natural question is then to measure what computational strength is required to make Miller forcing ``work''. In the argument that such forcing indeed does not add Cohen reals, we need to pass from a given condition to an extension that is more complicated. In particular, we need to ask questions about values of a functional on a given Miller tree~$T$, that can be answered by $T''$ (the double jump of~$T$). This is, in some sense, why we cannot get a separation at the computable level: the collection of computable Miller trees is insufficient to carry out the argument, so we cannot use it to produce a hyperimmune oracle that does not compute a weakly 1-generic. However, the fact that we only need a few jumps to carry out the argument, shows that, for example, there is a function that escapes all arithmetically definable functions, but does not compute an arithmetically generic real, even with the help of an arithmetic oracle. 

	In general, we work over a countable Turing ideal $\ideal{I}$, and use Turing reducibility relativised to $\ideal{I}$:
	\begin{definition} \label{def:I-computability}
		For a Turing ideal~$\ideal{I}$ and $x,y\in \baire$, we write $x\le_{\T(\ideal{I})} y$ if $x\le_{\T}y\oplus z$ for some $z\in \ideal{I}$; we say that $y$ \emph{$\ideal{I}$-computes}~$x$.   
	\end{definition}
	We ask what closure properties of~$\ideal{I}$ are sufficient (and perhaps necessary) so that there is a function~$f$ that is not dominated by any function in~$\ideal{I}$, but does not $\ideal{I}$-compute any $\ideal{I}$-Cohen generic real (a real that is a memeber of every dense open set with code in~$\ideal{I}$). To date, a complete characterisation is not known.\footnote{The current known upper bound is ideals satisfying: for all $X\in \ideal{I}$, there is some $g\in \ideal{I}$ which is diagonally nonrecursive relative to~$X$. The current known lower bound are ideals that fail this property, and also are not closed under finding an escaping function: ideals for which there is some $X\in \ideal{I}$ such that every $g\in \ideal{I}$ is dominated by some $X$-computable function.}

	This question is just one in a broad family of questions. There are several classes of oracles with many equivalent characterisations, with some of the implications more natural than others. For example, the high degrees are those that compute a list of exactly all computable sets. In \cite{DowneyGreenbergMiller}, Downey, Greenberg and Miller show that for a Turing ideal~$\ideal{I}$, there is a list of all the sets in $\ideal{I}$ which does not compute an $\ideal{I}$-dominating function, if and only if $\ideal{I}$ is a model of Weak K\"onig's Lemma. 

	Many of these classes, and implications between them are closely related to \emph{Cichon's diagram}, that describes the relationships between ten cardinal characteristics of the continuum. This connection was first observed by Rupprecht \cite{rupprecht,Rupprecht2010}, and then further studied by Brendle, Brooke-Taylor, Ng and Nies \cite{nies}, Kihara \cite{Kihara18}, Kjos-Hanssen, Stephan and Terwijn \cite{Kjos-HanssenEtAl:Covering}, Monin and Nies \cite{MoninNiesGammaClasses}, and Greenberg, Kuyper and Turetsky \cite{GKT}, among others. The cardinals and the classes of oracles both arise from binary relations that are now known as \emph{Weihrauch problems}. For the binary relations~$A$ that we will consider, we call elements of the domain of~$A$ \emph{instances} of~$A$, and if $a A b$ then we call~$b$ an \emph{$A$-solution} of~$a$. For example, consider the following two Weihrauch problems: 
	\begin{itemize}
		\item The problem $\wpdom$, for which instances are elements $f\in \baire$, and~$g$ is a solution of~$f$ if $g$ dominates~$f$. 
		\item The problem $\capture{\ideal{M}}$, for which instances are elements $f\in \baire$ and a solution of~$f$ is a meagre set containing~$f$. 
	\end{itemize}
	If $A$ is a Weihrauch problem, then:
	\begin{itemize}
		\item We let $|A|$ be the smallest size of a subset~$W$ of the range of~$A$, with the property that every instance of~$A$ has a solution in~$W$; 
		\item For a Turing ideal $\ideal{I}$, we let  $\nl{\ideal{I}}{A}$ be the collection of oracles~$x$ that $\ideal{I}$-compute an instance of~$A$, that has no solution in~$\ideal{I}$. 
	\end{itemize}
	For our examples, $|\wpdom|$ is the \emph{dominating number}, denoted by $\mathfrak{d}$: the smallest size of a set of functions that is cofinal in the partial ordering $(\baire,\le^*)$; $\nl{\ideal{I}}{\wpdom}$ is the collection of oracles that $\ideal{I}$-compute a function escaping all functions in~$\ideal{I}$. $|\capture{\ideal{M}}|$, denoted by $\mathsf{cov}(\ideal{M})$, is the smallest number of meagre sets whose union is the entire real line; $\nl{\ideal{I}}{\capture{\ideal{M}}}$ is the collection of oracles that $\ideal{I}$-compute an \emph{$\ideal{I}$-Cohen} real, one which is not an element of any meagre set with code in~$\ideal{I}$.\footnote{When $\ideal{I}$ is the ideal of computable sets, then an $\ideal{I}$-Cohen real is called a weakly 1-generic real. In this paper, we will only be concerned with jump ideals. When $\ideal{I}$ is a jump ideal, there is no difference between weakly $\ideal{I}$-generic reals and $\ideal{I}$-generic reals. We use the term $\ideal{I}$-Cohen to differentiate with genericity for various other notions of forcing we use.}

	The main tool for understanding relationships between cardinals and classes is the notion of a \emph{morphism} between Weihrauch problems: a morphism from $A$ to~$B$ is a pair of functions (reductions), a function $\psi_{\texttt{inst}}$ from the instances of~$A$ to the instances of~$B$, and a function $\psi_{\texttt{sol}}$ from the solutions of~$B$ to the solutions of~$A$, with the property that for any $A$-instance~$a$, for any $B$-solution~$b$ of $\psi_{\texttt{inst}}(a)$, $\psi_{\texttt{sol}}(b)$ is an $A$-solution of~$a$. The use of morphisms is the following:
	\begin{itemize}
		\item If there is a morphism from~$A$ to~$B$ then $|A|\le |B|$. 
		\item If there is an effective morphism from~$A$ to~$B$ then for any Turing ideal~$\ideal{I}$, $\nl{\ideal{I}}{A}\subseteq \nl{\ideal{I}}{B}$. 
	\end{itemize}
	For example, there is an effective morphism from $\capture{\ideal{M}}$ to $\wpdom$, showing that $\mathsf{cov}(\ideal{M})\le \mathfrak{d}$ is a theorem of ZFC, and that for any Turing ideal~$\ideal{I}$, every $\ideal{I}$-Cohen real computes an $\ideal{I}$-escaping function (and in particular, every weakly 1-generic has hyperimmune degree).\footnote{The terminology in this area is varied; see \cite{GKT} for a survey of various names used for Weihrauch problems, morphisms, and notation for the associated cardinals and classes.}

	The general idea is that if there is an effective morphism from~$A$ to~$B$, then the inclusion $\nl{\ideal{I}}{A}\subseteq \nl{\ideal{I}}{B}$ is ``necessary''; the morphism shows why this inclusion holds in all possible contexts. Often (but not always), when there is no morphism from~$B$ to~$A$ then it is relatively consistent that $|A|<|B|$; and for sufficiently closed ideals $\ideal{I}$, we have $\nl{\ideal{I}}{B}\nsubseteq \nl{\ideal{I}}{A}$. 

	The machinery for showing both relative consistency results and non-inclusion of classes is by finding a notion of forcing that adds an instance of~$B$ with no simple solutions, while not adding complicated instances of~$A$. For example, as mentioned above, Miller forcing adds an escaping function (an instance of $\wpdom$ with no solution in the ground model), while not adding any Cohen reals (instances of $\capture{\ideal{M}}$ without solutions in the ground model). Then, iterating the notion of forcing yields a model in which $|A|<|B|$. And as we saw, if we can effectivise the notion of forcing to work over an ideal~$\ideal{I}$, then we can construct an oracle in $\nl{\ideal{I}}{B}\setminus \nl{\ideal{I}}{A}$. 

	In the current paper, we are concerned with three of the Weihrauch problems and classes that appear in Cichon's diagram.

	\begin{enumerate}
		\item The escaping problem $\wpesc$: an instance is a function~$f$, a solution is a function~$g$ not dominated by~$f$. The cardinal $|\wpesc|$ is $\mathfrak{b}$, the \emph{bounding number}: the smallest size of a collection of functions that is not bounded by a single function. For a Turing ideal $\ideal{I}$, we denote the class $\nl{\ideal{I}}{\wpesc}$ by $\domm{\ideal{I}}$: the collection of oracles that $\ideal{I}$-compute a function that dominates all functions in~$\ideal{I}$. 

		\item The ``spilling problem'' $\spill{\ideal{M}}$ for the meagre ideal: an instance is a meagre set~$A$, a solution is a meagre set $B\nsubseteq A$. The cardinal $|\spill{\ideal{M}}|$ is the \emph{additivity number} of the meagre ideal, $\mathsf{add}(\ideal{M})$: the smallest size of a collection of meagre sets whose union is not meagre. For a Turing ideal $\ideal{I}$, the class $\nl{\ideal{I}}{\spill{\ideal{M}}}$ consists of the oracles that $\ideal{I}$-compute a meagre set which contains all meagre sets in $\ideal{I}$. Such oracles are sometimes known as the \emph{strongly meagre-engulfing} oracles (relative to~$\ideal{I}$), and so we denote this class by $\SME{\ideal{I}}$.

		\item The corresponding problem $\spill{\ideal{N}}$ for the null ideal, which gives the additivity number $\mathsf{add}(\ideal{N})$, and the class $\SNE{\ideal{I}}$ of \emph{strongly null-engulfing} oracles relative to~$\ideal{I}$: those that $\ideal{I}$-compute a null set containing all null sets in~$\ideal{I}$. 
	\end{enumerate}

To make the last two definitions precise, we need to explain what it means to compute a null set and to compute a meagre set. We use the most common definition (as in \cite{rupprecht,nies,GKT}), using \emph{codes} or \emph{names} (the terminology comes from the theory of represented spaces developed by the Weihrauch school of computable analysis, see \cite{Weihrauch:ComputableAnalysis}). A name of an open subset $\mathcal{U}$ of Cantor space is a set $U\subseteq 2^{<\omega}$ of finite binary sequences such that 
\[
	\mathcal{U} = [U]^\prec = \left\{ x\in \cantor \,:\, (\exists \sigma\in U)\,\,\sigma\prec x   \right\}. 
\]
A name of a closed set  $\mathcal{P}\subseteq \baire$ is a name of a tree $T\subseteq \omega^{<\omega}$ such that $\mathcal{P}$ is the set of paths in $T$:
\[
	\mathcal{P} = [T] = \left\{ x\in \baire \,:\,  (\forall \sigma\prec x)\,\,\sigma\in T  \right\}. 
\]
A name of an $F_\sigma$ set $\mathcal{F}$ is a sequence of names of closed sets whose union is $\mathcal{F}$; a name of a $G_\delta$ set $\mathcal{G}$ is a sequence of names of open sets whose intersection is $\mathcal{G}$. We also recall that a name of a real number $r\in \mathbb{R}$ is a \emph{fast Cauchy sequence} converging to~$r$: a sequence of rational numbers $(q_n)$ with $|q_n-r|\le 2^{-n}$ for all~$n$. Armed with these, we can now define: 
\begin{itemize}
	\item A name of a $G_\delta$ null set~$\mathcal{H}$ consists of: a sequence of names of sets $\mathcal{U}_n$ satisfying $\mathcal{H} = \bigcap_n \mathcal{U}_n$; and a sequence of names of the reals $\lambda(\mathcal{U}_n)$, where~$\lambda$ denotes Lebesgue measure on Cantor space. 
	\item A name of an $F_\sigma$ meagre set is simply a name of an $F_\sigma$ set that happens to be meagre. 
\end{itemize}
We are only concerned with names of $G_\delta$ null sets, since every null set is contained in a $G_\delta$ null set. Similarly, every meagre set is contained in an $F_\sigma$ meagre set, so we only name those. We emphasise again that when naming null sets, it is not enough to give a name of a set $\mathcal{H}$ as a $G_\delta$ set; we also need to state the precise measures of the sets $\mathcal{U}_n$ whose intersection is $\mathcal{H}$. In algorithmic randomness, the null sets with computable names are called \emph{Schnorr null} sets.\footnote{We remark, however, that if $x$ is a name of a $G_\delta$ set $\mathcal{H}$ that happens to be null, then $x'$ (the Turing jump of~$x$) computes a name of $\mathcal{H}$ as a null set. Below we will work over jump ideals, so we can ignore the extra requirement to name the measures of the sets $\mathcal{U}_n$.} 

\medskip

The key fact concerning the three problems / classes we listed above, is the existence of two effective morphisms (see \cite{GKT}): 
\begin{itemize}
	\item There is an effective morphism from $\spill{\ideal{N}}$ to $\spill{\ideal{M}}$; 
	\item There is an effective morphism from $\spill{\ideal{M}}$ to $\wpesc$. 
\end{itemize}
Hence, the cardinals satisfy 
\[
	\mathsf{add}(\ideal{N}) \le \mathsf{add}(\ideal{M}) \le \mathfrak{b}; 
\]
and for any Turing ideal~$\ideal{I}$, 
\[
	\SNE{\ideal{I}} \subseteq \SME{\ideal{I}} \subseteq \domm{\ideal{I}}. 
\]

In the other direction, we sometimes get ``accidental'' containments. For example, when $\ideal{I}$ is the ideal of computable sets, then all three classes coincide with the collection of high degrees. We investigate what closure property guarantees separation between classes. Our main theorem is:

\begin{theorem} \label{thm:main}
	Suppose that $\ideal{I}$ is closed under hyperarithmetic reducibility $\lhyp$: for every $X\in \ideal{I}$, every $Y\lhyp X$ (that is, $Y$ which is $\Delta^1_1(X)$) is also in~$\ideal{I}$. Then
	\[
	\SNE{\ideal{I}} \subsetneq \SME{\ideal{I}} \subsetneq \domm{\ideal{I}}. 
\]
\end{theorem}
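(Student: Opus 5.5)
The inclusions themselves already follow from the two effective morphisms quoted above, so the content of \cref{thm:main} is the two separations. Following the set-theoretic template, I would use Laver forcing for $\domm{\ideal{I}}\nsubseteq\SME{\ideal{I}}$ and Hechler forcing for $\SME{\ideal{I}}\nsubseteq\SNE{\ideal{I}}$, effectivised over $\ideal{I}$. The relevant set-theoretic facts are that Laver forcing adds a dominating real but no Cohen reals, and that Hechler forcing adds a Cohen real and a dominating real, hence makes the ground-model meagre sets meagre, but adds no random reals.

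Concretely, I would fix a countable enumeration of $\ideal{I}$ and build a descending sequence of conditions, each lying in $\ideal{I}$. These would be Laver trees in the first case and Hechler conditions $(s,f)$ with $f\in\ideal{I}$ in the second. The generic $g$ is the limit. Since every $h\in\ideal{I}$ is dominated by the generic, $g\in\domm{\ideal{I}}$ in the Laver case. In the Hechler case, $g$ computes a Cohen-over-$\ideal{I}$ real together with a dominating function, and from these one computes a meagre set engulfing all $\ideal{I}$-coded meagre sets, using the standard morphism argument from \cite{GKT}; so $g\in\SME{\ideal{I}}$. The negative requirements are indexed by a Turing functional $\Phi$ and a $z\in\ideal{I}$:
\begin{itemize}
\item in the Laver case, $\Phi^{g\oplus z}$ is not an $\ideal{I}$-Cohen real, i.e.\ it is total only if it lands in some meagre set coded in~$\ideal{I}$;
\item in the Hechler case, every null set $\Phi^{g\oplus z}$ names fails to contain some null set coded in~$\ideal{I}$.
\end{itemize}
It is cleaner to aim for the reverse: show that $\Phi^{g\oplus z}$ lies in a closed or null set coded in $\ideal{I}$ whenever it is total. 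This prevents $g$ from computing a Cohen real, respectively a random real over $\ideal{I}$, which suffices because an engulfing null set lets one compute a random relative to~$\ideal{I}$ (cf.\ the morphisms). Each requirement is handled in one step: given a condition $p\in\ideal{I}$, find $q\le p$ in $\ideal{I}$ that forces the desired property.

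The main obstacle is the fusion and rank analysis that produces $q$ inside $\ideal{I}$. For Laver forcing the key lemma is the pure-decision / Laver property: for any condition $T$ and functional, there is $T'\le T$ with the same stem such that the value of $\Phi^{g\oplus z}\upto n$ is confined to a finite set of size bounded by a function of~$n$. This yields a closed nowhere dense set in~$\ideal{I}$ capturing $\Phi^{g\oplus z}$. The proof goes through a rank function on nodes of the tree (``$\sigma$ is good if it decides, or infinitely many children have lower rank''). These ranks are countable ordinals computed by a hyperarithmetic recursion relative to $T\oplus z$, and the resulting tree $T'$ is $\Delta^1_1(T\oplus z)$. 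This is exactly where closure under $\lhyp$ is used, and I expect it to be the hardest step: one must check that the rank is bounded below $\omega_1^{T\oplus z}$, via $\Sigma^1_1$-bounding, so that the construction really stays hyperarithmetic. For Hechler forcing I would adapt the analogous rank argument (Baumgartner--Dordal style) showing that the set of outcomes forced from conditions with a fixed stem has positive measure in a controlled way. This would give, for each $(s,\Phi)$, a null set coded hyperarithmetically in $p\oplus z$ that contains $\Phi^{g\oplus z}$, again with ranks bounded by $\Sigma^1_1$-bounding. Finally, I would interleave all requirements, together with the density requirements making $g$ dominating, in an $\omega$-length construction. The result is a single $g$ in each case witnessing the two strict inclusions.
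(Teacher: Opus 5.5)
There is a genuine gap, at exactly the step you flag as hardest, and it cannot be repaired while you force with plain Laver trees or plain Hechler conditions from~$\ideal{I}$.

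\textbf{Where the rank analysis breaks.} The ranked case is fine: if a $\Pi^1_1(X)$ set is $\omega$-big above a node, its rank is $X$-computable and there is a $\Delta^1_1(X)$ $\omega$-bushy witness (\cref{prop:closure_of_Pi11_sets}). The trouble is the unranked case. Suppose, e.g., that the set $A$ of strings on which $\Phi$ converges at a given input is $\omega$-small above the stem of~$T$.
\begin{itemize}
\item Then $T\setminus\cl(A\cap T)$ is a Laver tree avoiding~$A$, but it is only $\Sigma^1_1$.
\item There need not be any Laver subtree of~$T$ in~$\ideal{I}$ forcing divergence.
\item $\Sigma^1_1$-bounding says nothing about nodes of rank~$\infty$. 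The $\atr$ argument of \cref{prop:ATR:Laver:fund} handles this case with a descending sequence through an ill-founded order, and that sequence need not be $\Delta^1_1(X)$.
\end{itemize}

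\textbf{The obstruction is unavoidable.} By Turetsky's \cref{prop:dans}, a sufficiently generic $x_G$ for $\laver(\hyp)$ has Kleene's $\mathcal{O}$ in $\Sigma^0_2(x_G)$. Together with domination, this puts $x_G\in\SNE{\hyp}\subseteq\SME{\hyp}$. So $\laver(\hyp)$ lacks the Laver property over $\hyp$, and your key lemma is false for $\ideal{I}=\hyp$. The same leftmost-path argument applies to plain Hechler conditions $(\tau,f)$ with $f\in\hyp$: extend the stem along~$f$. So a plain Hechler generic over $\hyp$ also lands in $\SNE{\hyp}$, and your second separation fails for the same reason.

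\textbf{The missing idea: Kumabe's bad sets, made infinitary through $\omega$-bigness.} In the paper, Laver conditions are pairs $(T,B)$ with $T\in\laver(\ideal{I})$ and $B\subseteq T$ an $\omega$-closed, upward closed $\Pi^1_1(\ideal{I})$ set not containing the stem. The generic is forced into $[T]\setminus[B]^{\prec}$.
\begin{itemize}
\item In the small case, one simply adds the upward closure of $\cl(A\cap T)$ to~$B$, which strongly forces divergence.
\item In the big case, the fusion is an effective construction in $L_{\omega_1^X}[X]$. For each $\tau\in P_i(T_i)$ one waits until $\tau$ enters either $B$ or the relevant closure, and admissibility keeps the fusion in~$\ideal{I}$.
\end{itemize}
Since $B$ is only $\Pi^1_1(\ideal{I})$, a condition's leftmost path need not lie in~$\ideal{I}$, and that is exactly what Turetsky's argument relied on.

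\textbf{A second gap in the Hechler part.} You never secure what \cref{lem:truss} needs: a real dominating $\ideal{I}[x]$ for an $\ideal{I}$-Cohen real~$x$. Dominating $\ideal{I}$ alone is not enough. The paper obtains this by forcing with an explicit iteration $\iter$ of $\cohen\star\hechler$. Its conditions carry a Cohen name in~$\ideal{I}$ for the Hechler bound and a bad set of pairs, with $\omega$-bigness adapted to sets of pairs.

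Non-randomness then comes from $\sigma$-centredness rather than a measure computation. Once totality is forced (again using bad sets), consider the sets $P_{\sigma,\tau}$ of those $\nu$ for which $\{(\sigma',\tau'):\nu\preceq\Phi(\sigma',\tau')\}$ is $\omega$-big above $(\sigma,\tau)$. These are uniformly $\Pi^1_1(X)$ and infinite. By admissibility they have uniformly $\Delta^1_1(X)$ infinite subsets. One then builds a single finite-weight Solovay test in~$\ideal{I}$ meeting each of them infinitely often, and the condition forces that this test captures $\Phi(x_G,y_G)$.
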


This result is optimal, in the sense that weaker closure does not guarantee separation. In a follow-up paper, Osso, Miller and Scott \cite{listing} show that for every computable ordinal~$\alpha$, if we let $\ideal{I}_\alpha$ be the collection of reals computable from $\emptyset^{(\beta)}$ for some $\beta<\alpha$, then 
\[
	\SNE{\ideal{I}_\alpha} = \SME{\ideal{I}_\alpha} = \domm{\ideal{I}_\alpha}, 
\]
as they all coincide with the collection of oracles that $\ideal{I}_\alpha$-compute a \emph{list} of $\ideal{I}_\alpha$ --- a list of reals containing exactly the reals in $\ideal{I}_\alpha$. The question of whether we can obtain separation for the ideal of hyperarithmetic sets has been open for a while. 

\smallskip

There are many other reducibilities other than $\le_{\T(\ideal{I})}$ relative to which we can examine separations of associated classes. Some of these were studied by Kihara~\cite{Kihara18}. In particular, for hyperarithmetic reducibility, Kihara observed that the following classes of oracles~$x$ coincide:
\begin{itemize}
	\item Oracles~$x$ such that some $f\lhyp x$ dominates all $\Delta^1_1$ functions; 
	\item Oracles~$x$ such that some meagre set $A\lhyp x$ contains all $\Delta^1_1$ meagre sets; 
	\item Oracles~$x$ such that some null set $B\lhyp x$ contains all $\Delta^1_1$ null sets. 
\end{itemize}
Indeed, a single jump suffices. Suppose that $f$ dominates all~$\Delta^1_1$ functions. Then uniformly in a notation~$a$ for a computable ordinal, $f$ computes some $x=^* \emptyset^{(a)}$ (these iterations of the jump have uniform self-moduli). This implies that~$f'$ computes a \emph{weak list}\footnote{We say that $(x_n)$ is a weak list of $\ideal{I}$ if $\ideal{I} \subseteq \{x_n : n \in \omega\}$. It is known that, for any $\ideal{I}$, if $x$ is a weak list of $\ideal{I}$, then $x \in \SNE{\ideal{I}}$.} of all $\Delta^1_1$ functions, which implies that $f'\in \SNE{\hyp}$. Hence, the witnesses for the separations in \cref{thm:main} have to be constructed delicately. 

\smallskip

Some more evidence for why the proof of \cref{thm:main} must be fairly elaborate concerns the notions of forcing that we use. The standard tools for the analogous separations in set theory are \emph{Laver forcing} and \emph{Hechler forcing}. We will observe that the ``normal'' strength required for these arguments is actually stronger than just closure under relative $\Delta^1_1$; the natural usage of these notions of forcing goes through over models of $\atr$ (arithmetic transfinite recursion). Furthermore, below we will present an argument of Turetsky's that explains why a straightforward use of these notions of forcing cannot be used to prove \cref{thm:main}. 
To get them to work in the weaker setting, we need to employ ideas from a completely different quarter: that of \emph{bushy tree} forcing with \emph{bad sets}, originally introduced by Kumabe (see \cite{KL} and \cite{MK}). We will see that the infinitary version of bushy trees allows us to extend Kumabe's techniques from c.e.\ sets of bad strings to~$\Pi^1_1$ sets, and combine them with Laver and Hechler forcing, and iterations thereof. 

\medskip

In \cref{sec:Laver} we consider Laver forcing, and the separation between $\domm{\ideal{I}}$ and $\SME{\ideal{I}}$. We first explain the ``standard'' argument, and show that it goes through if $\ideal{I}$ models $\atr$, confirming a conjecture from \cite{GKT}. We then explain why this will not suffice under the weaker assumption of closure under $\lDelta{1}{1}$, and explain how to modify it to get one part of \cref{thm:main} (\cref{cor:separating_dom_and_SNE_in_general}). In \cref{sec:Hechler} we discuss Hechler forcing, and explain how to use it to obtain a weaker result: adding a dominating real without adding a random one. This is in preparation for \cref{sec:Hechler_and_Cohen}, in which we consider the separation between $\SME{\ideal{I}}$ and $\SNE{\ideal{I}}$, using a 2-step iteration of Cohen and Hechler forcing.

\section{Laver forcing} \label{sec:Laver}


Recall that for a tree $T\subseteq \omega^{<\omega}$, the \emph{stem} of~$T$ (denoted by $\stem{T}$) is the longest node that is comparable with all strings on~$T$, that is, the shortest \emph{splitting point} of~$T$. 

\begin{definition} \label{def:laver_trees}
  	A \emph{Laver tree} is a tree $T\subseteq \omega^{<\omega}$ such that every $\sigma\in T$ extending the stem has infinitely many children in~$T$. That is, if $\sigma\in T$ and $\sigma\succeq\stem{T}$ then there are infinitely many~$n$ such that $\concat{\sigma}{n}\in T$. 
  \end{definition}

  If $S$ and~$T$ are trees, then we say that $T$ \emph{extends} $S$ (and write $T\leq S$) when $T\subseteq S$. \emph{Laver forcing} $\laver$ is the partial ordering of all Laver trees, ordered by extension. For a Turing ideal $\ideal{I}$, $\laver(\ideal{I})$ is the collection of all Laver trees in~$\ideal{I}$, again ordered by extension. 

\smallskip

Any sufficiently generic filter $G\subset \laver(\ideal{I})$ gives us a real $x_G\in \baire$, namely  $x_G = \bigcup \left\{ \stem{T} \,:\, T\in G   \right\}$. The generic real is dominating:

\begin{proposition} \label{prop:laver_adds_dominating_functions}
	Let $\ideal{I}$ be a Turing ideal.  If $G\subset \mathbb{\laver(\ideal{I})}$ is sufficiently generic, then $x_G$ dominates every function in~$\ideal{I}$. 
\end{proposition}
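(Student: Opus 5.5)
Fix $f\in\ideal{I}$. We show that the set
\[
D_f = \{ T\in \laver(\ideal{I}) \,:\, (\forall \sigma\in T)(\forall n)\,\bigl[\,|\stem{T}|\le n<|\sigma| \rightarrow \sigma(n)\ge f(n)\,\bigr]\}
\]
is dense in $\laver(\ideal{I})$. Since $\ideal{I}$ is countable, a sufficiently generic $G$ meets every $D_f$, and this yields the proposition. Indeed, suppose $T\in G\cap D_f$ with stem $\tau$. For every $S\in G$ there is a common extension $R\le S,T$ in $G$, and $\stem{S}\preceq\stem{R}\in R\subseteq T$. Hence every initial segment of $x_G$ lies in $T$, so $x_G\in[T]$. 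It follows that $x_G(n)\ge f(n)$ for all $n\ge|\tau|$, so $x_G$ dominates $f$, and in fact everywhere beyond $|\tau|$.

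To be careful, we also need $x_G$ to be total. For each $k$, the set of Laver trees in $\ideal{I}$ whose stem has length at least $k$ is dense. Given $S$, pick some $\sigma\in S$ extending $\stem{S}$ of length at least $k$; then the full subtree of $S$ above $\sigma$, namely $\{\rho\in S : \rho\preceq\sigma \text{ or } \sigma\preceq\rho\}$, is a Laver tree computable from $S$, hence in $\ideal{I}$, and its stem is $\sigma$. Genericity for these sets makes $x_G$ total. The stems of members of $G$ are pairwise comparable, since two trees in $G$ have a common extension whose stem extends both stems.

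The main step is density of $D_f$. Given $S\in\laver(\ideal{I})$ with stem $\tau$, define
\[
T = \{\sigma\in S \,:\, (\forall n)\,\bigl[\,|\tau|\le n<|\sigma| \rightarrow \sigma(n)\ge f(n)\,\bigr]\}.
\]
This set is closed under initial segments and contains every $\rho\preceq\tau$. Each $\sigma\in T$ extending $\tau$ still has infinitely many children in $T$: from the infinitely many children $\concat{\sigma}{m}$ in $S$ we discard only the finitely many with $m<f(|\sigma|)$.

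It remains to see that the stem of $T$ is exactly $\tau$. Since $\tau$ has infinitely many children in $T$, it is a splitting node, and it lies on $T$, so the stem is $\tau$. Therefore $T$ is a Laver tree, $T\le S$, and $T$ is computable from $S\oplus f$, so $T\in\ideal{I}$ because $\ideal{I}$ is a Turing ideal. By construction $T\in D_f$.

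There is no real obstacle here. The only points needing care are that pruning preserves the Laver property, which holds because only finitely many children are removed at each node, and that membership in $\ideal{I}$ is preserved, which holds by closure under join and Turing reducibility.
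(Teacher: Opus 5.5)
Your proof is correct and is the paper's argument: the paper prunes the given Laver tree in the same way, keeping only the nodes $\sigma$ with $\sigma(i)\ge f(i)$ for $|\stem{T}|\le i<|\sigma|$, and notes that the result is a Laver tree in $\ideal{I}$ that (indeed strongly) forces $x_G$ to dominate $f$. Your extra checks (stems of members of $G$ form a chain, $x_G$ is total, and $x_G\in[T]$ for $T\in G$) supply routine details the paper leaves implicit; one small point is that countability of $\ideal{I}$ is needed only for the existence of such a generic $G$, not for the implication itself.
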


\begin{proof}
	Suppose that $T\in \laver(\ideal{I})$; let $f\in \ideal{I}$. We let $S\subseteq T$ be the collection of $\sigma\in T$ such that for all $i$ with $|\stem{T}|\le i < |\sigma|$ we have $\sigma(i)\ge f(i)$. Then $S\in \laver({\ideal{I}})$, and $S$ forces that $x_G$ dominates~$f$. 
\end{proof}

Recall that $p\force_{\laver(\ideal{I})} \vphi(x_G)$ if for every sufficiently generic $G\subset \laver(\ideal{I})$, $\vphi(x_G)$ holds. In the proof of \cref{prop:laver_adds_dominating_functions} we actually obtained something stronger: for every $x\in [S]$, $x$ dominates~$f$, not just those $x\in [S]$ that are sufficiently generic. We will say that a condition~$T$ \emph{strongly forces} $\vphi(x_G)$  if $\vphi(x)$ holds for all $x\in [T]$. In general, for each notion of forcing $\mathbb{P}$ that we will use, we will  define, for each condition $p\in \mathbb{P}$, a closed set $[p]\subseteq \baire$,  and we will say that $p$ \emph{strongly forces} $\vphi$  if for all $x\in [p]$, $\vphi(x)$ holds. For a general framework for notions of forcing with associated closed sets see for example~\cite{CGM}.

\subsection{The Laver property and Cohen generics} 
The Laver property is a well-known property of certain forcing notions (among which, of course, Laver forcing). A forcing notion with the Laver property does not add any Cohen reals. We give a definition adapted to our context of forcing over a countable ideal $\ideal{I}$, and prove that it implies that a generic adds no $\ideal{I}$-Cohen real.

	\begin{definition} \label{def:traces}
		A \emph{trace} of a function~$f$ is a sequence $(F_n)$ of finite subsets of~$\omega$ such that for all~$n$, 
		\begin{itemize}
			\item $f(n)\in F_n$; and
			\item $|F_n|\le 2^n$. 
		\end{itemize}
	\end{definition}
	
	Traces are often called \emph{slaloms}. The bound $|F_n|\le 2^n$ is arbitrary; for our purposes, it is important that we have a fixed, computable bound. 

	\begin{definition} \label{def:I[x]}
		Let $\ideal{I}$ be a Turing ideal, and let $x\in \baire$. We let 
		\[
			\ideal{I}[x] = \left\{ y\in \baire  \,:\,  y\le_{\T(\ideal{I})} x \right\}.
		\]
	\end{definition}

	We say that a function is \emph{$\ideal{I}$-dominated} if it is dominated by some function in~$\ideal{I}$. 

	\begin{definition} \label{def:Laver_property}
		Let $\ideal{I}$ be a countable Turing ideal; let $\mathbb{P}$ be a notion of forcing. We say that $\mathbb{P}$ has the \emph{Laver property} over $\ideal{I}$ if for all sufficiently generic $G\subset \mathbb{P}$, every $g\in \ideal{I}[x_G]$ that is $\ideal{I}$-dominated has a trace in~$\ideal{I}$.
	\end{definition}

	\begin{lemma} \label{lem:Laver_property_and_generics}
		Let $\ideal{I}$ be a countable Turing ideal. Suppose that a notion of forcing $\mathbb{P}$ has the Laver property over $\ideal{I}$. Then for sufficiently generic $G\subset \mathbb{P}$, no $f\in \ideal{I}[x_G]$ is  $\ideal{I}$-Cohen. 
	\end{lemma}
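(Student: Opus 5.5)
The plan is to take an arbitrary $f\in\ideal{I}[x_G]$, convert it into an $\ideal{I}$-dominated function $g\in\ideal{I}[x_G]$, apply the Laver property to $g$, and turn the resulting trace in~$\ideal{I}$ into a meagre set with a name in~$\ideal{I}$ that contains~$f$. Since $\ideal{I}[x_G]$ is closed under Turing reducibility, any function computed from~$f$ also lies in $\ideal{I}[x_G]$. It suffices to treat $f\in\cantor$; for $f\in\baire$ we first replace $f$ by a bounded code, for instance the binary sequence $0^{f(0)}10^{f(1)}1\cdots$. This code is Turing equivalent to~$f$, and it is $\ideal{I}$-Cohen whenever $f$ is, because the coding map is continuous and open onto its comeagre image. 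If the reader prefers to work in Baire space throughout, the same argument below goes through with blocks of $f$ encoded as natural numbers.

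For $f\in\cantor$, let $g(n)$ be a canonical code of the block $f\upto[2^{n},2^{n+1})$, that is, a binary string of length $2^n$, so $g(n)<2^{2^n}$. Then $g\le_{\T} f$, so $g\in\ideal{I}[x_G]$, and $g$ is bounded by the computable function $n\mapsto 2^{2^n}$, which lies in~$\ideal{I}$. Hence $g$ is $\ideal{I}$-dominated. Because $G$ is sufficiently generic, the Laver property gives a trace $(F_n)\in\ideal{I}$ of~$g$. In other words, for every~$n$ the block of $f$ on the interval $I_n=[2^n,2^{n+1})$ belongs to a set of at most $2^n$ strings of length $2^n$ that is computable from a member of~$\ideal{I}$.

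The remaining step is to show that the set
\[
\mathcal{A}=\left\{ y\in\cantor \,:\, (\forall^\infty n)\,\, y\upto I_n \in F_n \right\}
\]
is meagre and has an $F_\sigma$ name in~$\ideal{I}$. Writing $\mathcal{A}=\bigcup_m \mathcal{C}_m$ with $\mathcal{C}_m=\{y : (\forall n\ge m)\, y\upto I_n\in F_n\}$, each $\mathcal{C}_m$ is closed, and the tree naming it is computable from $(F_n)$, hence lies in~$\ideal{I}$. Each $\mathcal{C}_m$ is also nowhere dense. Given any string~$\sigma$, choose $n\ge m$ with $2^{n}>|\sigma|$. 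Since $|F_n|\le 2^n<2^{2^n}$, some string $\tau$ of length $2^n$ is not in $F_n$. Extending $\sigma$ so that it agrees with $\tau$ on $I_n$ produces a string whose cone misses $\mathcal{C}_m$. Thus $\mathcal{A}$ is meagre with code in~$\ideal{I}$, and $f\in\mathcal{A}$, so $f$ is not $\ideal{I}$-Cohen.

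The only step that needs care is choosing the block sizes so that two conditions hold at once: the trace bound $2^n$ must be strictly smaller than the number of possible blocks, and the intervals $I_n$ must be disjoint so that each $\mathcal{C}_m$ is nowhere dense. The choice of doubling intervals handles both conditions. Uniformity is not an issue, because ``sufficiently generic'' can be taken to meet the Laver property requirement for each of the countably many reductions $f=\Phi^{x_G\oplus z}$ with $z\in\ideal{I}$.
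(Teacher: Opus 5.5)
Your proposal is correct and is essentially the paper's argument: you code pieces of $f$ into a computably bounded (hence $\ideal{I}$-dominated) function in $\ideal{I}[x_G]$, take a trace in $\ideal{I}$ from the Laver property, and read off a nowhere dense closed set with a name in $\ideal{I}$ that contains $f$. The differences are cosmetic: the paper uses the overlapping initial segments $f\upto 2^n$ and the single closed set $\{x : (\forall n)\, x\upto 2^n\in F_n\}$, which is nowhere dense by counting, so disjoint intervals are not actually necessary; you use disjoint blocks and an $F_\sigma$ set, where your $\mathcal{C}_0$ alone would already suffice; and you also treat $f\in\baire$ by coding, a case the paper leaves implicit.
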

	
	\begin{proof}
		Let $f\in \ideal{I}[{x_G}]$ be an element of Cantor space~$\cantor$; we show how to construct a nowhere dense closed set in~$\ideal{I}$ that contains~$f$. 

		For $n<\omega$ let $\hat f(n) = f\upto 2^n$. Then $\hat f\in \ideal{I}[x_G]$ is dominated by the computable function $n\mapsto 2^{2^n}$, in particular, it is $\ideal{I}$-dominated. Hence, let $(F_n)\in \ideal{I}$ be a trace of~$\hat f$. We may assume that for all~$n$, every $\sigma\in F_n$ has length $2^n$. Let 
		\[
			\mathcal{P} = \left\{ x\in \cantor \,:\, (\forall n)\,\,x\upto n\in F_n    \right\}. 
		\]
		Then $\mathcal{P}$ is closed, is in $\ideal{I}$ (as it is computable from $(F_n)$), contains $f$, and is nowhere dense. To see the latter, we use the fact that $2^n < 2^{2^n}$; for each~$n$, at most $2^n$ strings of length $2^n$ have extensions in~$\mathcal{P}$. 
	\end{proof}

	Our main interest in Cohen generics relies on the fact that there is an effective morphism from $\spill{\ideal{M}}$ to the Weihrauch problem $\capture{\ideal{M}}$ mentioned in the introduction (see \cite[Proposition 3.4]{GKT}). Hence, for any ideal~$\ideal{I}$, every $x\in \SME{\ideal{I}}$ computes an $\ideal{I}$-Cohen real. We conclude:

	\begin{corollary} \label{cor:Laver_property_and_SME}
		Let $\ideal{I}$ be a countable Turing ideal; suppose that a notion of forcing $\mathbb{P}$ has the Laver property over $\ideal{I}$. Then for sufficiently generic $G\subset \mathbb{P}$, $x_G\notin \SME{\ideal{I}}$. 
	\end{corollary}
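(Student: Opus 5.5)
We argue by contradiction, combining \cref{lem:Laver_property_and_generics} with the effective morphism from $\spill{\ideal{M}}$ to $\capture{\ideal{M}}$ recalled just before the statement.

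Let $G\subset\mathbb{P}$ be sufficiently generic, so that the conclusion of \cref{lem:Laver_property_and_generics} holds: no $f\in\ideal{I}[x_G]$ is $\ideal{I}$-Cohen. Suppose, for a contradiction, that $x_G\in\SME{\ideal{I}}$. Then $x_G$ $\ideal{I}$-computes a name $A$ of an $F_\sigma$ meagre set which contains every meagre set with a name in~$\ideal{I}$. In other words, $A$ is an instance of $\spill{\ideal{M}}$ with no solution in~$\ideal{I}$.

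Next, apply the effective morphism $(\psi_{\texttt{inst}},\psi_{\texttt{sol}})$ from $\spill{\ideal{M}}$ to $\capture{\ideal{M}}$ of \cite[Proposition 3.4]{GKT}, and set $f=\psi_{\texttt{inst}}(A)$. Since $\psi_{\texttt{inst}}$ is computable and $A\le_{\T(\ideal{I})}x_G$, we get $f\le_{\T(\ideal{I})}x_G$, so $f\in\ideal{I}[x_G]$. We claim that $f$ is $\ideal{I}$-Cohen. If not, some meagre set $B$ with a name in~$\ideal{I}$ contains~$f$, so $B$ is a $\capture{\ideal{M}}$-solution of~$f$. Because $\psi_{\texttt{sol}}$ is effective and $\ideal{I}$ is a Turing ideal, $\psi_{\texttt{sol}}(B)$ is in~$\ideal{I}$ and is a $\spill{\ideal{M}}$-solution of~$A$. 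That contradicts the choice of~$A$.

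Thus $f\in\ideal{I}[x_G]$ is $\ideal{I}$-Cohen, contradicting \cref{lem:Laver_property_and_generics}. Hence $x_G\notin\SME{\ideal{I}}$.

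There is no real obstacle; the argument only assembles the two cited facts. The one point to check is that $\ideal{I}$-computability is preserved under the computable maps $\psi_{\texttt{inst}}$ and $\psi_{\texttt{sol}}$. This holds because $\ideal{I}$ is closed under Turing reducibility and $\le_{\T(\ideal{I})}$ is transitive for a Turing ideal.
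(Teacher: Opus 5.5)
Your proof is correct and is essentially the paper's argument: the paper obtains the corollary by combining \cref{lem:Laver_property_and_generics} with the observation that every element of $\SME{\ideal{I}}$ $\ideal{I}$-computes an $\ideal{I}$-Cohen real, which follows from the effective morphism from $\spill{\ideal{M}}$ to $\capture{\ideal{M}}$ of \cite[Proposition 3.4]{GKT}. You have simply unpacked, for this instance, the general fact that an effective morphism yields $\nl{\ideal{I}}{\spill{\ideal{M}}}\subseteq\nl{\ideal{I}}{\capture{\ideal{M}}}$.
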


	\subsection{Fusion of Laver trees}

	The fusion of a sequence of trees was first introduced by Sacks. The notion applies to Laver trees as well.

	\begin{definition} \label{def:P_i_of_a_Laver_tree}
		Let $T$ be a Laver tree.
		We define a family of subsets of $T$ recursively as follows:
		\begin{itemize}
			\item $P_0(T) = \{\stem{T}\}$. 
			\item For $i\ge 0$, $P_{i+1}(T)$ is obtained from $P_i(T)$ by adding the left-most children of elements of $P_i(T)$ that are not yet in $P_i(T)$. That is, 
			\[
				P_{i+1}(T) = P_i(T)\cup \left\{ \concat{\sigma}n \,:\,  \sigma\in P_i(T) \text{ and $n$ is least with } \concat{\sigma}{n}\notin P_i(T) \right\}. 
			\]
		\end{itemize}

		For $i\ge 0$ and Laver trees~$T$ and~$S$, we write $T\le_i S$ if $T\subseteq S$ and $P_i(T) = P_i(S)$. 
	\end{definition}

	Like fusion with Sacks trees, the idea is to present a tree~$T$ as an increasing union of finite subtrees (ignoring the part below the stem). That is, 
	 \[
	 T = \left\{ \sigma  \,:\,  \sigma\prec \stem{T} \right\} \cup \bigcup_i P_i(T). 
	 \]
	 The particular definition is chosen so that for all~$T$ and~$i$, 
	\[
		|P_i(T)| = 2^i.
	\]
	Since at each stage we add a child to each existing node, if $j>i$ and $\sigma\in P_i(T)$ then $\sigma$ has $j-i$ many children in $P_j(T)\setminus P_i(T)$.

	\begin{lemma}[Fusion for Laver trees]\label{lem:fusion}
	Suppose that $T_0, T_1, \dots$ are Laver trees, and that 
	\[
		T_0 \ge_0 T_1 \ge_1 T_2 \ge_2 T_3 \ge_3\cdots.
	\]
	Let $T= \bigcap_{i \in \omega} T_i$. Then~$T$ is a Laver tree, and for all~$i$, $T\le_i T_i$. 
	\end{lemma}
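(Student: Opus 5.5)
The plan is to first show that the finite sets $P_i(T_i)$ stabilise, and then express $T$ as the tree they generate. Since $T_{i+1}\le_i T_i$ means $P_i(T_{i+1}) = P_i(T_i)$, a backwards induction along the chain shows that for $j\ge i$, $P_i(T_j)=P_i(T_i)$. Here I use that $P_i(S)\subseteq P_{i+1}(S)$, and that $P_{i'}(S)$ is determined by $P_{i}(S)$ for $i'\le i$. For the latter I would check that $P_{i'}(S)$ can be read off from $P_i(S)$ as the nodes of $P_i(S)$ "added by stage $i'$": the stem, together with, for each $\sigma\in P_{i'-1}$, its least child not in $P_{i'-1}$. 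Because children are added in increasing order, this is recoverable from $P_i(S)$. Define $Q_i = P_i(T_i)$. Then $Q_0\subseteq Q_1\subseteq\cdots$, all $T_j$ have the same stem $\tau = \stem{T_0}$, and I claim
\[
T = \{\sigma : \sigma\preceq\tau\}\cup\bigcup_i Q_i .
\]

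For the inclusion $\supseteq$: each $\sigma\in Q_i$ lies in $P_i(T_j)\subseteq T_j$ for all $j\ge i$, and also in $T_j$ for $j<i$ since the trees decrease. Initial segments of $\tau$ lie in every $T_j$. For the inclusion $\subseteq$: every node of a Laver tree $S$ is either an initial segment of the stem or lies in $\bigcup_k P_k(S)$, by the displayed decomposition in the paper. So if $\sigma\in T$ extends $\tau$, then $\sigma\in T_j$ for all $j$. Take $j=|\sigma|+\sigma$'s "rank", more concretely $j$ large. I would argue directly that $\sigma\in Q_k$ for some $k$ by induction on $|\sigma|-|\tau|$. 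For the inductive step, suppose $\sigma = \concat{\rho}{n}$ with $\rho\in Q_k$. Then $\rho\in P_m(T_m)$ for all $m\ge k$, and each stage adds a new child of $\rho$ in increasing order. Since $P_m(T_m)=Q_m$ is stable, the children of $\rho$ in $Q_m$ for $m\ge k$ are exactly the first $m-k$ children of $\rho$ in $T_m$ beyond those in $Q_k$. As $n\in$ children of $\rho$ in every $T_m$, and fewer than $n+1$ children lie below $n$, $\concat{\rho}{n}$ must be among the first $n+1$ children of $\rho$ in $T_m$. Hence $\concat{\rho}{n}\in Q_m$ once $m\ge k+n+1$. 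I expect this step to be the main point requiring care.

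Finally, the Laver property and $T\le_i T_i$. The stem of $T$ is $\tau$, and each $\rho\in Q_k$ acquires a new child in each $Q_m$, $m>k$, so it has infinitely many children in $T$. Every node of $T$ extending $\tau$ is in some $Q_k$, so $T$ is a Laver tree with stem $\tau$. Then $P_i(T)$ is computed from $T$ by the same recursion. By induction on $i$, $P_i(T)=Q_i$: the least child of $\sigma\in Q_i$ not in $Q_i$ is the same in $T$ as in $T_{i+1}$. Indeed, $T\subseteq T_{i+1}$, and the child chosen by $T_{i+1}$ lies in $Q_{i+1}\subseteq T$. Thus $P_i(T)=P_i(T_i)$ and $T\subseteq T_i$, i.e.\ $T\le_i T_i$.
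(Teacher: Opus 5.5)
Your proof is correct and follows essentially the same route as the paper's: you stabilise $P_i(T_j)=P_i(T_i)$ for $j\ge i$, prove $T=\{\sigma:\sigma\preceq\tau\}\cup\bigcup_i P_i(T_i)$ by induction on length (a child $\concat{\rho}{n}$ of a protected node $\rho$ is itself protected after boundedly many further stages), and then read off the Laver property and $P_i(T)=P_i(T_i)$. You spell out two points the paper only asserts, namely that $\le_{i+1}$ implies $\le_i$ and the induction giving $P_i(T)=Q_i$; the leftover phrase about taking $j=|\sigma|+$ ``rank'' should simply be deleted.
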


	A sequence $(T_i)$ as in the lemma can be viewed as the result of a recursive process of thinning of~$T_0$: at stage~$i$ we have~$T_i$. To define $T_{i+1}$, we first declare that the strings in $P_i(T_i)$ are protected, and cannot be removed (now or at any future stage). For each node~$\sigma$ we can nonetheless remove infinitely many children when passing from $T_i$ to $T_{i+1}$, as long as we keep infinitely many. 

	\begin{proof}
		Since $\le_{i+1}$ implies $\le_i$, and each $\le_i$ is transitive, for all $j\ge i$ we have $T_j \leq_{i} T_i$, so $P_i(T_i) \subseteq T$. In particular, all the trees~$T_i$ have the same stem, call it~$\sigma$, and
		\[
			T = \left\{ \rho  \,:\,  \rho\prec \sigma \right\} \cup \bigcup_i P_i(T_i). 
		\]
		To see the containment from left to right, since $T\subseteq T_0$, every string in~$T$ is comparable with~$\sigma$. By induction on the length of $\tau\in T$ extending~$\sigma$, we show that $\tau\in P_i(T_i)$ for some~$i$. We have $\sigma\in P_0(T_0)$. If $\tau\succ\sigma$, let $\bar\tau$ be the parent of~$\tau$, and let~$i$ such that $\bar \tau\in P_i(T_i)$. Suppose that~$\tau$ is the $k^{\textup{th}}$ child of $\bar \tau$ in~$T_i$. As $\tau\in T_{i+k}$ and $P_i(T_{i+k}) = P_i(T_i)$, we have $\tau\in P_{i+k}(T_{i+k})$. The same argument shows that $\bar \tau$ has infinitely many children on~$T$, so $T$ is a Laver tree, and $P_i(T) = P_i(T_i)$. 
	\end{proof}

	A sequence $(T_i)$ as above is called a \emph{fusion sequence}, and the resulting tree~$T$ is called the {fusion} of the sequence.	For the purposes of constructing fusion sequences below, we use the following partition of a given Laver tree. For the following definition and below, for a tree~$T$ and a node $\tau\in T$, we let $T\upto\tau$ denote the full subtree issuing from~$\tau$, that is, the collection of all $\sigma\in T$ comparable with~$\tau$. If $T$ is a Laver tree, then so is $T\upto{\tau}$; if $\tau\succeq \stem{T}$ then the stem of $T\upto{\tau}$ is~$\tau$. 

	\begin{definition} \label{def:fusion_partition}
		Let $T$ be a Laver tree, and let $i<\omega$. For $\tau \in P_i(T)$ we let $T^i_\tau$ be the result of removing from $T\upto\tau$ all proper extensions of~$\tau$ in $P_i(T)$ (and their extensions). 
	\end{definition}
	
	Since we are only removing finitely many children of~$\tau$, each $T^i_\tau$ is a Laver tree, with $\stem{T^i_\tau} = \tau$. The collection of closed sets $[T^i_\tau]$ for $\tau\in P_i(T)$ is a finite partition of $[T]$.

	\subsection{$\omega$-bushy trees and a notion of largeness}\label{subsect:big}

	As mentioned in the introduction, we will use an infinitary version of notions arising from forcing with bushy trees (see \cite{MK} for a survey). For the following definition, we weaken the notion of ``tree'', to allow a root that is not necessarily the empty sequence. More precisely, a \emph{tree above $\sigma$} is a set~$T$ of nodes $\tau\succeq\sigma$ such that for all $\tau\in T$, for all~$\rho$ with $\sigma\preceq \rho \preceq \tau$, we have $\rho\in T$.

	\begin{definition} \label{def:omega-bushy}
		Let $\sigma\in \omega^{<\omega}$. A tree~$T$ above~$\sigma$ 
		is \emph{$\omega$-bushy} above $\sigma$ if for every $\tau\in T$, either $\tau$ is a leaf of~$T$, or $\tau$ has infinitely many children $\concat{\tau}{n}$ in~$T$.
	\end{definition}
	
	Note the difference with Laver trees: the latter are not allowed to have leaves. Our interest will be with well-founded $\omega$-bushy trees.

	\begin{definition} \label{def:omega-big}
		Let $B\subseteq \omega^{<\omega}$. We say that $B$ is \emph{$\omega$-big} above $\sigma$ if there is a well-founded tree~$T$, $\omega$-bushy above~$\sigma$, such that every leaf of~$T$ is in~$B$. 
	\end{definition}

	Central to our investigations is a characterisation of bigness ``from below'', using a transfinite ranking procedure. Let $B\subseteq \omega^{<\omega}$. By recursion on ordinals $\alpha$, we define the relation ``$\rk_B(\sigma)\le  \alpha$'' for $\sigma\in \omega^{<\omega}$. If this has been defined for all $\beta<\alpha$, then we write $\rk_B(\sigma)<\alpha$ if $\rk_B(\sigma)\le \beta$ for some $\beta <\alpha$. 
	\begin{itemize}
		\item $\rk_B(\sigma)\le 0$ if  $\sigma\in B$. 
		\item For $\alpha>0$, $\rk_B(\sigma)\le \alpha$ if there are infinitely many~$n$ with $\rk_B(\concat{\sigma}n)<\alpha$.
	\end{itemize}
	If $\rk_B(\sigma)\le \alpha$ for some~$\alpha$, then we let $\rk_B(\sigma)$ denote the least such~$\alpha$, and we write $\rk_B(\sigma)<\infty$.  Otherwise we write $\rk_B(\sigma)=\infty$. (Observe that if $\rk_B(\sigma)\le \alpha$ then $\rk_B(\sigma)\le \beta$ for all $\beta\ge \alpha$, justifying this notation.)

	\begin{lemma} \label{lem:equivalence_of_ranked_and_bigness}
		For any $B\subseteq \omega^{<\omega}$ and any $\sigma\in \omega^{<\omega}$, $\rk_B(\sigma)<\infty$ if and only if $B$ is $\omega$-big above~$\sigma$. 
	\end{lemma}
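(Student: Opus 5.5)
We prove the two directions separately. In both, the ranking is an inductive definition and the bushy tree is a well-founded witness, so the argument is a standard induction on one side and a bar-induction or rank argument on the other.

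For the direction ``$\rk_B(\sigma)<\infty$ implies $B$ is $\omega$-big above~$\sigma$'', we argue by induction on the ordinal $\alpha=\rk_B(\sigma)$. If $\alpha=0$ then $\sigma\in B$, and the tree $\{\sigma\}$ works: it is well-founded and $\omega$-bushy above~$\sigma$, and its only leaf~$\sigma$ is in~$B$. If $\alpha>0$, we fix an infinite set $N$ of~$n$ with $\rk_B(\concat{\sigma}{n})<\alpha$. For each $n\in N$ the induction hypothesis gives a well-founded tree $T_n$, $\omega$-bushy above $\concat{\sigma}{n}$, whose leaves lie in~$B$. We set $T=\{\sigma\}\cup\bigcup_{n\in N}T_n$. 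This is a tree above~$\sigma$ in which $\sigma$ has the infinitely many children $\concat{\sigma}{n}$, $n\in N$. Every other node lies in exactly one $T_n$, and its children in~$T$ are its children in $T_n$, because the $T_n$ sit above incomparable roots. Hence $T$ is $\omega$-bushy, its leaves are leaves of some $T_n$ and so lie in~$B$, and $T$ is well-founded since any infinite path would pass through some $\concat{\sigma}{n}$ and then be a path through $T_n$.

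For the converse, we suppose $T$ is a well-founded tree, $\omega$-bushy above~$\sigma$, with all leaves in~$B$, and show $\rk_B(\tau)<\infty$ for every $\tau\in T$. Suppose some $\tau\in T$ had $\rk_B(\tau)=\infty$. Such a~$\tau$ is not a leaf, since leaves are in~$B$ and so have rank~$0$. So $\tau$ has infinitely many children in~$T$, and we claim one of them also has infinite rank. If instead all of them had rank $<\infty$, we take $\alpha$ to be the supremum of their ranks plus one; this is an ordinal, since it is a countable supremum. Then infinitely many children $\concat{\tau}{n}$ satisfy $\rk_B(\concat{\tau}{n})<\alpha$, so $\rk_B(\tau)\le\alpha$, a contradiction. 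Iterating this choice (dependent choice, or simply choosing the least such~$n$ at each step) produces an infinite path through~$T$, contradicting well-foundedness. Applying this to $\tau=\sigma$ gives $\rk_B(\sigma)<\infty$.

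The only point needing care is the ordinal bookkeeping: we need that infinite rank is exactly the failure of the inductive clause at every ordinal, and that the supremum of countably many ordinals is an ordinal. Alternatively, one can define $\rk_T$ as the usual well-founded rank of~$T$ and check by induction on it that $\rk_B(\tau)\le\rk_T(\tau)$. Neither step is a serious obstacle; this is the routine equivalence of an inductive definition with the existence of a well-founded witness.
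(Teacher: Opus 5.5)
Your proof is correct. For the direction from bigness to finite rank, your infinite-descent argument is the contrapositive of the paper's induction on the tree-rank $\rk_T(\tau)$, which you also mention as an alternative. The paper's form has the small advantage of giving the quantitative bound $\rk_B(\tau)\le\rk_T(\tau)$.

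For the direction from finite rank to bigness you take a genuinely different route. You build a witness bottom-up by induction on $\rk_B(\sigma)$, gluing together witness trees $T_n$ for the children. This requires choosing one $T_n$ for each $n$, which is countable choice and harmless in ZFC.

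The paper instead writes down a single canonical witness: the set of $\tau\succeq\sigma$ along which $\rk_B$ strictly decreases. This needs no choice, has rank bounded by $\rk_B(\sigma)$, and is explicitly definable from the rank function. That definability is what the paper exploits later. In \cref{prop:closure_of_Pi11_sets}(2), the same construction applied to a $\Delta^1_1$ stage $B_s$ of the enumeration immediately yields a $\Delta^1_1$ bushy witness. In \cref{prop:ATR:Laver:fund}, it is reused with a ranking along an $X$-computable ordering $L$.

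Your gluing construction proves the lemma as stated, but it does not transfer as directly to those effective settings.
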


	\begin{proof}
		Suppose that $B$ is $\omega$-big above $\sigma$; let $T$ be a well-founded, $\omega$-bushy tree above~$\sigma$ witnessing this. By induction on the tree-rank $\rk_T(\tau)$ of $\tau\in T$ we can see that $\rk_B(\tau)\le \rk_T(\tau)$; it follows that $\rk_B(\sigma)<\infty$. 

		In the other direction, suppose that $\rk_B(\sigma)<\infty$. Let~$T$ be the collection of all sequences $\tau\succeq \sigma$ such that for all $\rho,\rho'$ with $\sigma \preceq \rho\prec \rho' \preceq \tau$ we have $\rk_B(\rho')< \rk_B(\rho)$. The definition of~$T$ implies that it is well-founded, indeed its rank is bounded by $\rk_B(\sigma)$. If $\tau\in T$ and $\rk_B(\tau)>0$ then there are infinitely many~$n$ with $\rk_B(\concat{\tau}n)<\rk_B(\tau)$, showing that $\tau$ has infinitely many children on~$T$. If $\tau\in T$ and $\rk_B(\tau)=0$ then $\tau$ must be a leaf of~$T$, and it is an element of~$B$. Hence, $T$ is $\omega$-bushy above~$\sigma$, and witnesses that~$B$ is $\omega$-big above~$\sigma$. 
	\end{proof}
	
	We obtain analogues of two fundamental facts about $n$-big sets: the concatenation property \cite[Lem.\:2.6]{MK} and the smallness preservation property \cite[Lem.\:2.7]{MK}:

	\begin{lemma} \label{lem:concatenation_and_preservation_of_smallness}
		Let $A,B,C\subseteq \omega^{<\omega}$. 
		\begin{enumerate}
			\item Suppose that $B$ is $\omega$-big above~$\sigma$, and that for all $\tau\in B$, $C$ is $\omega$-big above~$\tau$.  Then~$C$ is $\omega$-big above~$\sigma$. 

			\item If $A\cup B$ is $\omega$-big above~$\sigma$, then either~$A$ or~$B$ are $\omega$-big above~$\sigma$. 
		\end{enumerate}
	\end{lemma}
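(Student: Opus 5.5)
Both parts go through the rank characterisation of \cref{lem:equivalence_of_ranked_and_bigness}, by induction on ordinals. The one exception is part (1), where it is simplest to build the witnessing tree directly.

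For (1), we fix a well-founded $\omega$-bushy tree $T$ above $\sigma$ whose leaves all lie in $B$. For each leaf $\tau$ of $T$ we fix a well-founded $\omega$-bushy tree $S_\tau$ above $\tau$ whose leaves all lie in $C$. We then set
\[
	T^* = (T\setminus\{\text{leaves of } T\}) \cup \bigcup_{\tau \text{ a leaf of } T} S_\tau,
\]
so that each leaf of $T$ is replaced by the tree grafted at it.

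We check that $T^*$ is a tree above $\sigma$. A node of $S_\tau$ extends the leaf $\tau$, and every initial segment between $\sigma$ and $\tau$ lies in $T$. We also need the grafted trees not to interfere with one another or with $T$. For this, note that if $\rho\in S_\tau$ properly extends $\tau$, then $\rho\notin T$, because $\tau$ is a leaf of $T$. Moreover, distinct leaves of $T$ are incomparable, so distinct $S_\tau$ are disjoint beyond their roots. It follows that every node of $T^*$ is either an internal node of $T$ or a node of exactly one $S_\tau$.

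Each such node keeps its children: internal nodes of $T$ retain infinitely many children, and nodes of $S_\tau$ retain their children from $S_\tau$. Hence $T^*$ is $\omega$-bushy, and its leaves are exactly the leaves of the $S_\tau$, all of which lie in $C$. Finally, an infinite path through $T^*$ either stays inside $T$ forever, which is impossible since $T$ is well-founded, or passes through some leaf $\tau$ and then stays in $S_\tau$, which is also impossible. So $T^*$ is well-founded.

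Alternatively, one could prove by induction on $\alpha$ that $\rk_B(\rho)\le\alpha$ implies $\rk_C(\rho)<\infty$. The base case is exactly the hypothesis, and the successor and limit steps are immediate from the definition of the rank.

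For (2), we prove by induction on $\alpha$ that if $\rk_{A\cup B}(\rho)\le\alpha$, then $\rk_A(\rho)<\infty$ or $\rk_B(\rho)<\infty$. If $\alpha=0$, then $\rho\in A\cup B$, so one of the two ranks is $0$. If $\alpha>0$, there are infinitely many $n$ with $\rk_{A\cup B}(\concat{\rho}{n})<\alpha$. By the induction hypothesis, each such child $\concat{\rho}{n}$ has finite $A$-rank or finite $B$-rank. By pigeonhole, infinitely many of these children have finite rank with respect to the same set, say $A$.

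The only point needing care is that a single ordinal must bound these ranks, so that $\rk_A(\rho)$ is actually defined. Countably many ordinals $\rk_A(\concat{\rho}{n})$ have a supremum $\gamma$, and then $\rk_A(\rho)\le\gamma+1$. By \cref{lem:equivalence_of_ranked_and_bigness} this gives (2), and I do not expect any real obstacle here.
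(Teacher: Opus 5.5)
Your proof is correct and follows the paper's. For (1) the paper also grafts witnessing trees onto the leaves of $T$. For (2) it also inducts on $\rk_{A\cup B}$ with a pigeonhole on the children, but proves the sharper identity $\rk_{A\cup B}(\tau)=\min\{\rk_A(\tau),\rk_B(\tau)\}$, so the ordinal $\alpha$ itself serves as the bound and your supremum step is unnecessary.
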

	
	\begin{proof}
		(1): Let~$T$ witness that $B$ is $\omega$-big above~$\sigma$, and for $\tau\in B$, let $T_\tau$ witness that~$C$ is $\omega$-big above~$\tau$. Let~$S$ be the union of~$T$ and the trees $T_\tau$ for leaves~$\tau$ of~$T$. Then~$S$ witnesses that~$C$ is $\omega$-big above~$\sigma$. 

		(2): By induction on $\rk_{A\cup B}(\tau)$, we observe that $\rk_{A\cup B}(\tau)= \min \{\rk_A(\tau), \rk_B(\tau)\}$. It follows that $\rk_A(\sigma)<\infty$ or $\rk_B(\sigma)<\infty$. 
	\end{proof}
	
	We can rephrase these properties in the language of closure operators.

	\begin{definition} \label{def:closure_of_B}
		For a set $B\subseteq \omega^{<\omega}$, we let $\cl(B)$ denote the collection of~$\sigma$ such that~$B$ is $\omega$-big above~$\sigma$. 
	\end{definition}

	\Cref{lem:concatenation_and_preservation_of_smallness} implies:

	\begin{proposition} \label{prop:closure_operator}
		The operator~$\cl$ satisfies:
		\begin{enumerate}
			\item $B\subseteq \cl(B)$; 
			\item $\cl(\cl(B)) = \cl(B)$; and
			\item $\cl(A\cup B) = \cl(A)\cup \cl(B)$. 
		\end{enumerate}
	\end{proposition}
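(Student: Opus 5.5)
We will derive each of the three clauses directly from \cref{lem:concatenation_and_preservation_of_smallness}, together with the trivial observation that bigness is monotone: if $A\subseteq B$ and $A$ is $\omega$-big above $\sigma$, then so is $B$, witnessed by the same tree. It follows at once that $\cl$ is monotone.

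For (1), suppose $\sigma\in B$. The one-node tree $\{\sigma\}$ is well-founded and $\omega$-bushy above~$\sigma$: its only node is a leaf. That leaf lies in~$B$, so $B$ is $\omega$-big above~$\sigma$, and hence $\sigma\in\cl(B)$. (Equivalently, $\rk_B(\sigma)\le 0$, and we may appeal to \cref{lem:equivalence_of_ranked_and_bigness}.)

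For (2), the inclusion $\cl(B)\subseteq\cl(\cl(B))$ is (1) applied to the set $\cl(B)$. For the reverse inclusion, let $\sigma\in\cl(\cl(B))$. Then $\cl(B)$ is $\omega$-big above~$\sigma$, and by definition $B$ is $\omega$-big above every $\tau\in\cl(B)$. Part~(1) of \cref{lem:concatenation_and_preservation_of_smallness}, with $\cl(B)$ in the role of~$B$ and $B$ in the role of~$C$, gives that $B$ is $\omega$-big above~$\sigma$, so $\sigma\in\cl(B)$.

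For (3), monotonicity gives $\cl(A)\cup\cl(B)\subseteq\cl(A\cup B)$. Conversely, if $A\cup B$ is $\omega$-big above~$\sigma$, then part~(2) of \cref{lem:concatenation_and_preservation_of_smallness} says that $A$ or $B$ is $\omega$-big above~$\sigma$, so $\sigma\in\cl(A)\cup\cl(B)$. There is no real obstacle here. The only point needing care is that the application of concatenation in~(2) requires bigness above \emph{every} element of $\cl(B)$, and this holds precisely by the definition of~$\cl$.
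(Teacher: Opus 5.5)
Your proof is correct and follows the paper's route: the paper just says the proposition follows from \cref{lem:concatenation_and_preservation_of_smallness}. You supply the routine details the paper leaves implicit: the one-node witness for (1), concatenation for the nontrivial inclusion in (2), and monotonicity together with preservation of smallness for (3).
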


	\begin{definition} \label{def:omega-closed}
		We say that a set $B$ is \emph{$\omega$-closed} if for all~$\sigma$, if for infinitely many~$n$, $\concat{\sigma}n\in B$, then $\sigma\in B$. 
	\end{definition}

	\begin{lemma} \label{lem:characterisation_of_omega-closed}
		A set $B\subseteq \omega^{<\omega}$ is $\omega$-closed if and only if $B = \cl(B)$. 
	\end{lemma}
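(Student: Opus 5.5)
We prove the two directions separately, using only the rank characterisation of bigness (\cref{lem:equivalence_of_ranked_and_bigness}) and part (1) of \cref{prop:closure_operator}.

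\emph{From $B=\cl(B)$ to $\omega$-closed.} Suppose $B=\cl(B)$ and that $\concat{\sigma}{n}\in B$ for infinitely many~$n$. For each such~$n$ we have $\rk_B(\concat{\sigma}{n})\le 0<1$. By the definition of the rank, $\rk_B(\sigma)\le 1$, so $\rk_B(\sigma)<\infty$. By \cref{lem:equivalence_of_ranked_and_bigness}, $B$ is $\omega$-big above~$\sigma$, that is, $\sigma\in\cl(B)=B$. Equivalently, the one-level tree consisting of~$\sigma$ and these infinitely many children is a well-founded $\omega$-bushy tree witnessing bigness directly.

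\emph{From $\omega$-closed to $B=\cl(B)$.} Suppose $B$ is $\omega$-closed. The inclusion $B\subseteq\cl(B)$ is \cref{prop:closure_operator}(1), so it remains to show $\cl(B)\subseteq B$. By \cref{lem:equivalence_of_ranked_and_bigness}, $\sigma\in\cl(B)$ if and only if $\rk_B(\sigma)\le\alpha$ for some ordinal~$\alpha$. We show by induction on~$\alpha$ that $\rk_B(\sigma)\le\alpha$ implies $\sigma\in B$.
\begin{itemize}
	\item If $\alpha=0$, then $\sigma\in B$ by the definition of the rank.
	\item If $\alpha>0$, then there are infinitely many~$n$ with $\rk_B(\concat{\sigma}{n})<\alpha$, that is, $\rk_B(\concat{\sigma}{n})\le\beta_n$ for some $\beta_n<\alpha$. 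By the induction hypothesis each such $\concat{\sigma}{n}$ lies in~$B$. Since this happens for infinitely many~$n$, $\omega$-closedness gives $\sigma\in B$.
\end{itemize}
The induction hypothesis has to be stated for all strings simultaneously, not just for~$\sigma$, since it is applied to the children $\concat{\sigma}{n}$.

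An alternative for this direction is induction on the tree-rank of a well-founded $\omega$-bushy tree witnessing that $B$ is $\omega$-big above~$\sigma$: leaves are in~$B$, and each internal node has infinitely many children in~$B$, so it is in~$B$ by $\omega$-closedness. This is the same argument phrased without ranks.
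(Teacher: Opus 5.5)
Your proof is correct and is essentially the paper's. The paper also obtains $\omega$-closedness from $\rk_B(\sigma)\le 1$ when infinitely many children lie in $B$, and proves $\cl(B)\subseteq B$ by induction on $\alpha$, phrased contrapositively as: $\sigma\notin B$ implies $\rk_B(\sigma)\nleq\alpha$.
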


	\begin{proof}
		Suppose that $B$ is $\omega$-closed. Then by induction on $\alpha\ge 0$, we see that for all $\sigma\notin B$, $\rk_B(\sigma)\nleq \alpha$; so $\rk_B(\sigma)=\infty$ for all $\sigma\notin B$; so $\cl(B)=B$. In the other direction, suppose that for infinitely many~$n$, $\concat{\sigma}n\in B$. Then $\rk_B(\sigma)\le 1$, so $\sigma\in \cl(B)$. If $B = \cl(B)$, then $\sigma\in B$. 
	\end{proof}

\begin{lemma} \label{rmk:closure_stays_inside_tree}
	Let $T\subseteq \omega^{<\omega}$ be a tree, and let $B\subseteq T$.
	\begin{enumerate}
		\item $\cl(B)\subseteq T$. 
		\item If $B$ is $\omega$-closed, then so is its upwards closure in~$T$. 
		\item If $A\subseteq \omega^{<\omega}$ is $\omega$-closed, then so is $A\cap T$. 
	\end{enumerate}
\end{lemma}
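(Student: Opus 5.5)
We prove the three parts in order, each directly from the definitions: rank for (1), and the definition of $\omega$-closed for (2) and (3).

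\textbf{Part (1).} We show by induction on $\alpha$ that $\rk_B(\sigma)\le\alpha$ implies $\sigma\in T$. For $\alpha=0$ this holds because $\sigma\in B\subseteq T$. For $\alpha>0$, there are infinitely many $n$ with $\rk_B(\concat{\sigma}{n})<\alpha$. There is at least one such $n$, and the induction hypothesis gives $\concat{\sigma}{n}\in T$. Since $T$ is closed under prefixes, $\sigma\in T$. By \cref{lem:equivalence_of_ranked_and_bigness}, every $\sigma\in\cl(B)$ has $\rk_B(\sigma)<\infty$, so $\cl(B)\subseteq T$.

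\textbf{Part (2).} Let $U=\{\tau\in T : (\exists\rho\in B)\,\rho\preceq\tau\}$ be the upwards closure of $B$ in $T$. Suppose $\concat{\sigma}{n}\in U$ for infinitely many $n$; we must show $\sigma\in U$. Each such $n$ comes with some $\rho_n\in B$ satisfying $\rho_n\preceq\concat{\sigma}{n}$, and $\sigma\in T$ by prefix-closure. We split into two cases.
\begin{itemize}
\item If $\rho_n\preceq\sigma$ for some such $n$, then $\sigma\in U$ directly.
\item Otherwise $\rho_n=\concat{\sigma}{n}$ for infinitely many $n$. Then $\concat{\sigma}{n}\in B$ for infinitely many $n$, so $\omega$-closedness of $B$ gives $\sigma\in B\subseteq U$.
\end{itemize}

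\textbf{Part (3).} Suppose $\concat{\sigma}{n}\in A\cap T$ for infinitely many $n$. Then $\sigma\in A$ because $A$ is $\omega$-closed, and $\sigma\in T$ because $T$ is a tree. Hence $\sigma\in A\cap T$.

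No step is a real obstacle. The only point needing care is the case split in (2): a witness $\rho_n$ lies either at or below $\sigma$, or it is the child $\concat{\sigma}{n}$ itself.
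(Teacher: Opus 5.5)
Your proof is correct and follows the paper's argument: (2) uses the same case split (either every relevant child $\concat{\sigma}{n}$ lies in $B$, or some witness is a prefix of $\sigma$), and (3) is identical. The only cosmetic difference is in (1), where you induct on $\rk_B$, whereas the paper takes a well-founded $\omega$-bushy witness tree and notes that its nodes lie below leaves in $B\subseteq T$; the two arguments are equivalent.
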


\begin{proof}
	(1): Let $\sigma\in \cl(B)$; let $R$ be an $\omega$-bushy tree above~$\sigma$ witnessing this. Since~$R$ is well-founded and its leaves are in~$B$, and hence in~$T$, $R\subseteq T$, so $\sigma\in T$. 

	(2): Let~$C$ be the upward closure of~$B$ in~$T$. Let $\sigma\in \omega^{<\omega}$, and suppose that for infinitely many~$n$, $\concat{\sigma}n\in C$; we show that $\sigma\in C$. Note that since $C\subseteq T$, we have $\sigma\in T$. There are two possibilities. If for all such~$n$, $\concat{\sigma}n\in B$, then $\sigma\in B$, so $\sigma\in C$. Otherwise, $\sigma$ itself extends some string in~$B$, so $\sigma\in C$. 

	(3): If $\sigma$ has infinitely many children in $A\cap T$, then $\sigma\in T$ (since~$T$ is a tree) and $\sigma\in A$ (since~$A$ is $\omega$-closed). 
\end{proof}

	We turn to calculations of complexity. 

	\begin{proposition} \label{prop:closure_of_Pi11_sets}
		Suppose that $B\subseteq \omega^{<\omega}$ is $\Pi^1_1$. 
		\begin{enumerate}
			\item For all $\sigma$, if $\rk_B(\sigma)<\infty$ then $\rk_B(\sigma)< \ock$.
			\item If $B$ is $\omega$-big above~$\sigma$, then there is a $\Delta^1_1$ tree~$T$, $\omega$-bushy above~$\sigma$, witnessing this. 
			\item $\cl(B)$ is $\Pi^1_1$. 
		\end{enumerate}
	\end{proposition}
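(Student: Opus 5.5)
The plan is to view the rank function as the stages of a positive $\Pi^1_1$ inductive definition, and to apply the standard closure ordinal bound for such inductions. Consider the operator $\Gamma_B$ on subsets of $\omega^{<\omega}$ given by
\[
\Gamma_B(X) = B\cup\{\sigma : (\exists^\infty n)\, \concat{\sigma}{n}\in X\}.
\]
This operator is monotone. Since $B$ is $\Pi^1_1$, and the clause ``$\exists^\infty n\,\concat{\sigma}{n}\in X$'' is arithmetic and positive in $X$, the operator $\Gamma_B$ is a positive $\Pi^1_1$ operator. Write $\Gamma_B^{\alpha}$ for its stages. By induction on $\alpha$ one checks that $\Gamma_B^{\alpha}$, taken cumulatively as the union of earlier stages with the operator applied, is exactly $\{\sigma : \rk_B(\sigma)\le\alpha\}$. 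The least fixed point of $\Gamma_B$ is therefore $\cl(B)$, by \cref{lem:equivalence_of_ranked_and_bigness}.

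For (1) and (3), I would invoke the Spector--Gandy style theorem that positive $\Pi^1_1$ monotone inductive definitions close off by stage $\ock$ and have $\Pi^1_1$ least fixed points. To give a direct argument for (3) instead, I would express the least fixed point as
\[
\sigma\in\cl(B) \iff (\forall X)\bigl[\Gamma_B(X)\subseteq X \rightarrow \sigma\in X\bigr].
\]
This looks $\Pi^1_1$ at first glance, but it is not quite, because ``$B\subseteq X$'' places $B$ in a negative position, making the hypothesis $\Sigma^1_1\wedge\ldots$ rather than what we need.

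The cleanest way around this is to reduce to $B$ being the set of $\sigma$ whose associated computable tree $R_\sigma$ is well-founded, and then argue with the bushy witness tree directly. Concretely, the route I would take is this. $\sigma\in\cl(B)$ iff there is a well-founded $\omega$-bushy tree $T$ above $\sigma$ with leaves in $B$. Using the rank characterisation, this is equivalent to a $\Pi^1_1$ statement via Kleene's theorem that $\Pi^1_1$ is closed under $\Delta^1_1$-bounded quantification over hyperarithmetic witnesses, together with (1).

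The hard step, then, is (1): the bound $\rk_B(\sigma)<\ock$. I would prove it via $\Sigma^1_1$ bounding. Suppose for contradiction that some $\sigma$ has rank $\ge\ock$ but $<\infty$. Consider the set
\[
D=\{\sigma : \rk_B(\sigma)<\infty\}.
\]
The stages $\{\sigma:\rk_B(\sigma)\le a\}$, for notations $a\in\mathcal O$, are uniformly $\Pi^1_1$, by effective transfinite recursion on $\mathcal O$. So $\bigcup_{a}\{\rk_B\le|a|\}$ is $\Pi^1_1$. The standard stage comparison argument (Gandy) then shows that stages cannot be added beyond $\ock$. Equivalently, if the stage-$\ock$ set $E$ is not closed, then $E$ would be $\Pi^1_1$ and its complement in some sense $\Sigma^1_1$, giving a $\Sigma^1_1$ well-ordering of unbounded height, which contradicts $\Sigma^1_1$ bounding. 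Hence $\Gamma_B(E)=E$, and so $E=\cl(B)$, which yields (1) and (3) together.

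For (2), given $\rk_B(\sigma)=\alpha<\ock$, I would fix a notation for $\alpha$. The tree constructed in the proof of \cref{lem:equivalence_of_ranked_and_bigness} uses the rank, and the rank is not $\Delta^1_1$. Instead, I would build $T$ by $\Delta^1_1$ recursion, using that each stage set $\{\rho:\rk_B(\rho)\le\beta\}$ for $\beta\le\alpha$ is $\Pi^1_1$. I would then select by $\Pi^1_1$-uniformization (Kreisel) infinitely many children of strictly smaller rank, choosing each child's rank witness as a hyperarithmetic stage. Alternatively, I would apply the Kreisel basis theorem: the set of witnessing trees $T$ restricted to leaves in the $\Delta^1_1$ set $\{\rho:\rk_B(\rho)\le\alpha\}\cap$(a $\Delta^1_1$ subset of $B$ obtained by Kleene separation at stage bounded by the notation) is a nonempty $\Sigma^1_1$ set of well-founded trees with bounded rank. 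Such a set has a $\Delta^1_1$ member, by the Gandy basis theorem combined with rank bounding by $\alpha$.
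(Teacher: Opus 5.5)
Your framework is sound as far as it goes. $\Gamma_B$ is a monotone $\Pi^1_1$ operator, its stages are the rank levels, these are uniformly $\Pi^1_1$ along $\mathcal{O}$, and (3) follows once (1) is known. But your argument for (2) has a genuine gap.

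\textbf{Part (2).} The set $\{\rho:\rk_B(\rho)\le\alpha\}$ is $\Pi^1_1$ but not $\Delta^1_1$ in general; for $\alpha=0$ it is $B$ itself. The whole content of (2) is the existence of a $\Delta^1_1$ set $B'\subseteq B$ that is still $\omega$-big above $\sigma$. You posit such a $B'$ (``Kleene separation at a stage bounded by the notation'') without argument, and separation does not produce it. The basis-theorem step is also invalid. Basis theorems for nonempty $\Sigma^1_1$ classes give members recursive in $\mathcal{O}$ (Kleene) or with $\omega_1^X=\ock$ (Gandy), not $\Delta^1_1$ members. A bound on the rank of the trees does not change this: nonempty $\Pi^0_1$ classes with no hyperarithmetic member can be coded into classes of rank-$1$ $\omega$-bushy trees. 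Your $\Pi^1_1$-uniformisation option has the same defect: choices made by a $\Pi^1_1$ uniformising function assemble into a $\Pi^1_1$ tree, and nothing shows that tree is $\Delta^1_1$. No general fact about monotone inductions rescues this, because the rank is not a $\Pi^1_1$-norm (its bottom level is $B$).

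The missing idea, which is what the paper does, is as follows. Fix an $\ock$-enumeration $(B_s)_{s<\ock}$ of $B$, and prove by induction on $\alpha=\rk_B(\rho)$ that $\rk_{B_s}(\rho)=\alpha$ for some $s<\ock$. In the inductive step, the infinite $\Pi^1_1$ set $\{n:\rk_B(\concat{\rho}{n})<\alpha\}$ has an infinite $\Delta^1_1$ subset $C$. Admissibility of $\ock$ then gives a single stage $s$ that works for every $n\in C$. Now $B_s$ is $\Delta^1_1$, so the tree of strictly decreasing $\rk_{B_s}$-ranks from \cref{lem:equivalence_of_ranked_and_bigness} is already $\Delta^1_1$, and no basis theorem is needed.

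\textbf{Part (1).} Quoting the known closure-ordinal bound for monotone $\Pi^1_1$ inductions on $\omega$, as a black box, would suffice. The direct argument you then give is not one. The set $E=\{\sigma:\rk_B(\sigma)<\ock\}$ is $\Pi^1_1$ with $\Sigma^1_1$ complement whether or not it is closed, so no contradiction comes from that. What must be shown is that if $\sigma$ has infinitely many children in $E$, then $\sigma\in E$. Bounding cannot be applied directly to the $\Pi^1_1$ set of those children, since such a set can carry ranks cofinal in $\ock$. You must first pass to an infinite $\Delta^1_1$ subset, whose ranks admissibility then bounds; this is the same device as in (2).

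\textbf{Part (3).} Your objection to the least-fixed-point formula has the polarity reversed. $\Gamma_B(X)\subseteq X$ is $\Sigma^1_1$ in $X$ and sits in the antecedent, so $\forall X\,[\Gamma_B(X)\subseteq X\rightarrow\sigma\in X]$ is $\Pi^1_1$. This proves (3) outright, independently of (1). That is simpler than the paper's derivation of (3) from the stage analysis.
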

	
	 \begin{proof}
	 	Let $(B_s)_{s<\ock}$ be an $\ock$-computable enumeration of~$B$. For each $\alpha<\ock$ and $s<\ock$, the collection of~$\sigma$ such that $\rk_{B_s}(\sigma)= \alpha$ is $\ock$-computable, uniformly in~$\alpha$ and~$s$. If $s<t$ then $B_s\subseteq B_t$, and so for all~$\sigma$, $\rk_{B_t}(\sigma)\le \rk_{B_s}(\sigma)$. By induction on $\alpha<\ock$, we see that if $\rk_B(\sigma)=\alpha$ then from some computable stage $s$ we have $\rk_{B_s}(\sigma) =\alpha$. This is immediate for $\alpha=0$ (as $B = \bigcup_s B_s$). Suppose that $\alpha>0$ and that this is known for all $\beta<\alpha$. Suppose that $\rk_B(\sigma)=\alpha$. Then 
	 	\[
	 		A = \{ n\,:\, \rk_B(\concat{\sigma}{n})<\alpha\}
	 	\]
	 	is infinite and $\ock$-c.e.\ (that is, $\Pi^1_1$). Since $\ock$ is admissible, $A$ has an infinite $\Delta^1_1$ subset~$C$. Again since $\ock$ is admissible, and by the induction assumption, there is some~$s$ such that for all $n\in C$, $\rk_{B_s}(\concat{\sigma}{n}) = \rk_B(\concat{\sigma}{n})$. Then $\rk_{B_s}(\sigma)=\alpha $ (and so $\rk_{B_t}(\sigma) = \alpha$ for all computable $t\ge s$). 

	 	There is no $\sigma$ with $\rk_B(\sigma)=\ock$: suppose that $\rk_B(\sigma)\le \ock$. Then $\{ n\,:\, \rk_B(\concat{\sigma}{n})<\ock\}$ is infinite and as we just observed, is~$\Pi^1_1$. Let $D$ be an infinite, $\Delta^1_1$ set of such $n$'s. By admissibility of $\ock$, the ranks $\rk_B(\concat{\sigma}{n})$ for $n\in D$ are all bounded below $\ock$; such a bound also bounds $\rk_B(\sigma)$. Now (1) follows by induction on the ordinals $\ge \ock$. 

	 	(2) follows from the analysis above: suppose that $\rk_B(\sigma)<\infty$. Then $\alpha = \rk_B(\sigma)$ is computable; let $s<\ock$ such that $\rk_{B_s}(\sigma) = \alpha$. Since $B_s$ is $\Delta^1_1$, the definition of the $\omega$-bushy tree in \cref{lem:equivalence_of_ranked_and_bigness} gives a $\Delta^1_1$ tree. 

	 	Finally, (3) follows from (1) and the analysis above. 
	 \end{proof}

	 Note that for (2) of \cref{prop:closure_of_Pi11_sets}, we cannot require that the leaves of~$T$ are \emph{minimal} elements of~$B$. They can be minimal elements of some~$B_s$, but shorter strings may later appear in~$B$.

	 \medskip

	To conclude this subsection, we note that $\omega$-bigness is $\lPi{1}{1}$-complete, both in the sense of $m$-reduction (for subsets of $\omega$) and in the sense of Wadge reduction (for subsets of $\cantor$ or of~$\baire$). The proof of this fact, due to Marcone, is based on the concept of \emph{smooth trees} (present in \cite{marcone} in the form of smooth barriers; see also \cite[Exercises VI.1.8 and VI.1.9]{simpson}).
	
	\begin{proposition}[Marcone]
		The set 
		 \[
		 O=\{B \subseteq \omega^{<\omega} \,:\,  B \text{ is $\omega$-big above }\rooot\} 
		 \]
		 is $\Pi^1_1$-complete, via a computable reduction. 
	\end{proposition}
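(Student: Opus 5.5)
The statement has two halves, membership and hardness, and I treat them separately.

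\emph{Membership.} By \cref{lem:equivalence_of_ranked_and_bigness}, $B\in O$ if and only if $\rk_B(\rooot)<\infty$. By \cref{def:omega-big}, this is equivalent to
\[
(\exists T)\,[T \text{ is well-founded, $\omega$-bushy above }\rooot\text{, and all leaves of $T$ lie in } B].
\]
Written this way the condition looks $\Sigma^1_2$, so it is better to phrase it as a $\Pi^1_1$ condition on~$B$ directly. The natural formulation is that $B$ is \emph{not} $\omega$-big above~$\rooot$ if and only if there is an $\omega$-closed set $C\supseteq B$ with $\rooot\notin C$; that is still $\Sigma^1_1$ only if $C$ can be chosen $\Delta^1_1(B)$, which is not obvious. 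Instead I would use the inductive characterisation: $\cl(B)$ is the least fixed point of the positive arithmetical operator $\Gamma_B(X)=B\cup\{\sigma : \exists^\infty n\ \concat{\sigma}{n}\in X\}$. Hence
\[
\rooot\in\cl(B)\iff (\forall X)\,\bigl[\Gamma_B(X)\subseteq X\to \rooot\in X\bigr].
\]
The direction ``$\Leftarrow$'' uses \cref{lem:characterisation_of_omega-closed}, which gives $\cl(B)\supseteq B$ is $\omega$-closed. The direction ``$\Rightarrow$'' uses the rank induction: every $\Gamma_B$-closed $X$ contains every $\sigma$ with $\rk_B(\sigma)\le\alpha$. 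The displayed formula is visibly $\Pi^1_1$, uniformly in~$B$.

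\emph{Hardness.} I would reduce the well-founded trees, which are $\Pi^1_1$-complete, to~$O$ computably. Given a tree $S\subseteq\omega^{<\omega}$, I want a set $B_S$, uniformly computable from~$S$, such that $B_S$ is $\omega$-big above $\rooot$ if and only if $S$ is well-founded. The idea is smooth trees in Marcone's sense. Encode nodes of~$S$ into strings so that each node of~$S$ has infinitely many ``copies'' among the children in $\omega^{<\omega}$; this makes the $\omega$-bushiness requirement automatic. Then declare a string bad (in $B_S$) exactly when it codes a finite sequence that leaves~$S$.

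Concretely, I would interpret each $\tau\in\omega^{<\omega}$ as a sequence $\langle\pi(\tau(0)),\pi(\tau(1)),\dots\rangle$, where $\pi(k)=(k)_0$ takes the first coordinate of a pairing. This way every value is repeated infinitely often. Put $\tau\in B_S$ iff the decoded sequence is not in~$S$; this is computable from~$S$. If $S$ is well-founded, the set of $\tau$ whose decoding lies in~$S$, together with its immediate children, forms a well-founded $\omega$-bushy tree whose leaves lie in $B_S$. The catch is that a node in~$S$ might have only finitely many children in~$S$, but that is fine: the children decoding outside~$S$ are leaves in $B_S$, and there are infinitely many of them.

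Conversely, suppose $S$ has a path~$x$. Then the set $C$ of $\tau$ whose decoding is an initial segment of~$x$ (in~$S$) witnesses that every bushy tree above $\rooot$ has a branch avoiding $B_S$. Indeed, any node in~$C$ has infinitely many children in~$C$, namely the copies of the next value of~$x$. So by induction no $\omega$-bushy well-founded tree above a node of~$C$ can have all leaves in $B_S$; equivalently, the complement of $C$'s upward closure is $\omega$-closed and contains $B_S$.

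\emph{Main obstacle.} The hard step is getting the pairing right so that ``outside $S$'' nodes still branch infinitely while paths of~$S$ force infinite branching inside the non-bad region. The choice of $\pi$ as the first coordinate with infinitely many preimages handles both. The Wadge-reduction version would be the same map, which is continuous (indeed computable) on $\cantor$-codes of~$S$.
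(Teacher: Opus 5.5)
Your membership argument is correct. It also differs from the paper's: you present $\cl(B)$ as the least fixed point of the positive arithmetical operator $\Gamma_B$, which makes the $\Pi^1_1$ bound visible uniformly in $B$, whereas the paper relativises the admissibility and rank analysis behind its result on closures of $\Pi^1_1$ sets. The first half of your hardness argument (if $S$ is well-founded then $B_S$ is $\omega$-big) is also fine.

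\textbf{The gap is the other half of the hardness argument.} If $S$ is ill-founded, $B_S$ need not be $\omega$-small. Having a set $C\ni\rooot$ disjoint from $B_S$ in which every node has \emph{infinitely many} children does not block bushy trees. A bushy tree may use any infinite set of children, in particular infinitely many outside $C$. Since $\cl(B)$ is $\omega$-closed, the correct criterion is this: $B$ is $\omega$-small above $\rooot$ if and only if there is some $D\ni\rooot$ with $D\cap B=\emptyset$ such that every node of $D$ has \emph{all but finitely many} of its children in $D$.

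Your decoding makes such a $D$ impossible except in a trivial case. Write $\hat\tau$ for the decoding of $\tau$. Every value $m$ has infinitely many $\pi$-preimages, so for each $\tau\in D$ and each $m$, some child of $\tau$ in $D$ decodes to $\concat{\hat\tau}{m}$. This forces $\omega^{<\omega}=\{\hat\tau:\tau\in D\}\subseteq S$. So your map sends a tree $S$ into $O$ if and only if $S\neq\omega^{<\omega}$.

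For a concrete counterexample, let $S$ be $\omega^{<\omega}$ minus all extensions of $\langle 0\rangle$. This $S$ is ill-founded, yet the infinitely many strings $\langle k\rangle$ with $\pi(k)=0$ all lie in $B_S$, so $\rk_{B_S}(\rooot)\le 1$. The pairing you present as the cure actually makes things worse. The obstruction was never too few good children; it is too many bad ones.

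What is needed is a construction in which following a path $f\in[S]$ makes all sufficiently large children good. No pointwise decoding of values can do this for every path simultaneously. The paper's smoothing does: it sends $S$ to $\omega^{<\omega}\setminus S^+$, where $S^+$ is the set of strings that pointwise dominate some element of $S$ of the same length. If $f\in[S]$ then every $\sigma\ge f\upto|\sigma|$ lies in $S^+$. Hence always choosing a large child leads any well-founded $\omega$-bushy tree to a leaf in $S^+$. The price is an extra argument, using compactness of $\{h: h\le g\}$ together with K\"onig's lemma, that $S^+$ is still well-founded when $S$ is. Once that is shown, bigness of the complement follows exactly as in your first half.
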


	The fact that there is a computable reduction from any $\Pi^1_1$ subset of Baire space to~$O$ gives the ``type 1'' computable analogue of this result: the set of indices of computable $B \subseteq \omega^{<\omega}$ that are $\omega$-big above $\rooot$ is $\lPi{1}{1}$-complete for subsets of~$\omega$ (using many-one reducibility). 

	\begin{proof}
		Relativising \cref{prop:closure_of_Pi11_sets} to an oracle, uniformly, shows that $O$ is $\Pi^1_1$. For $\Pi^1_1$-hardness, we exhibit a computable Wadge reduction from the set of wellfounded trees to~$O$. Given any tree $T \subseteq \omega^{<\omega}$, we define the tree 
		 \[
		 T^+=\{\sigma \in \omega^{<\omega}: \exists \tau \in T \,\, (|\tau|=|\sigma| \land \tau \leq \sigma)\},
		 \]
		 where $\tau\le \sigma$ means $\tau(i)\le \sigma(i)$ for all $i\le |\tau|$. Note that $T^+$ is computable from~$T$, uniformly. We define a computable function by mapping a tree~$T$ to $\omega^{<\omega} \setminus T^+$.
		
		If $T$ is well-founded, then so is $T^+$ (see, for example, \cite[Exercise VI.1.8]{simpson}; if $f\in [T^+]$ then use the fact that the collection of $h\le f$ is compact, and that~$T$ must be infinite). This implies that $\omega^{<\omega} \setminus T^+$ is $\omega$-big above~$\rooot$: let $A$ be the collection of minimal strings in $\omega^{<\omega} \setminus T^+$; let~$S$ be the downward closure of~$A$. Then~$S$ is well-founded and $\omega$-bushy above~$\rooot$, indeed, if $\sigma\in S$ is not in~$A$ then $\concat{\sigma}n\in S$ for all~$n$. 

		Suppose, on the other hand, that $T$ is ill-founded; let $f\in [T]$. Then every $\tau\ge f$ is in~$T^+$. By choosing large children, this shows that for any well-founded tree~$S$ that is $\omega$-bushy above~$\rooot$, some leaf of~$S$ is in $T^+$, hence $\omega^{<\omega}\setminus T^+$ is not $\omega$-big above~$\rooot$. 
	\end{proof}

	\begin{remark}\label{rem:fusionleaves}
		The notions defined above for fusions of Laver trees (the sets $P_i(T)$, partial ordering $\le_i$, etc.)  can be extended from Laver trees to trees that are $\omega$-bushy above their stem. That is, we can allow leaves; in the definition of $P_{i+1}(T)$ we of course omit adding a child $\concat{\sigma}n$ if such a child does not exist. The fusion~$T$ of a sequence $(T_i)$ will be $\omega$-bushy above the stem; every leaf of~$T$ is a leaf of some $T_i$. When forcing with $f$-bushy trees with bad sets of arbitrary complexity (rather than just c.e.\ ones), we must allow the trees to have some leaves (just not a big set of leaves). In this paper, we only force with $\Pi^1_1$ bad sets (the analogue of~c.e.\ in this setting), so we can restrict ourselves to Laver trees. 
	\end{remark}

\subsection{Laver forcing over $\atr$}

As mentioned in the introduction, before we show that $\SME{\ideal{I}}\subsetneq \domm{\ideal{I}}$ for any $\lhyp$-closed countable ideal~$\ideal{I}$, we first consider the simpler situation, when~$\ideal{I}$ is a countable $\omega$-model of~$\atr$. We will show that this gives enough closure to carry out the forcing argument from set theory. Recall that $\atr$ is the subsystem of second order arithmetic, defined by the axiom of {arithmetic transfinite recursion}. It allows for iterations of the Turing jump along arbitrary linear orderings in the model that the model thinks are well-orderings. This is in contrast with ideals closed under $\lhyp$, where iterations of the Turing jump are provided only for genuine well-orderings in~$\ideal{I}$. 

The fundamental result about Laver forcing that requires $\atr$, is the following.
	
\begin{proposition}\label{prop:ATR:Laver:fund}
Assume $\atr$. Let $X$ be a set, and let $\mathcal{U}$ be an open set with an $X$-computable name. Let $T$ be a Laver tree that is $\Delta^1_1(X)$. Then there is some Laver tree $S\le_0 T$ such that either:
\begin{itemize}
	\item $S\in \Delta^1_1(X)$ and $[S]\subseteq \+U$; or
	\item $[S]\subseteq \+U^{\complement}$. 
\end{itemize}
\end{proposition}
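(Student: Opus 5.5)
We apply the $\omega$-bigness machinery of \cref{subsect:big} to the set of nodes of~$T$ that already force $\mathcal{U}$. Fix an $X$-computable name $U\subseteq \omega^{<\omega}$ of~$\mathcal{U}$ (Baire space, or Cantor space viewed inside it). Let $\sigma_0=\stem{T}$ and
\[
B = \left\{ \tau\in T \,:\, \tau\succeq \sigma_0 \text{ and } (\exists \rho\in U)\,\rho\preceq\tau \right\},
\]
the nodes of $T$ above the stem that already extend a basic open subset of~$\mathcal{U}$. Since $T$ is $\Delta^1_1(X)$, $B$ is $\Delta^1_1(X)$, in particular $\Pi^1_1(X)$. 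The proof splits on whether $\sigma_0\in\cl(B)$.

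\emph{Case 1: $B$ is $\omega$-big above $\sigma_0$.} By \cref{prop:closure_of_Pi11_sets}(2), relativised to~$X$, there is a $\Delta^1_1(X)$ tree $R$, $\omega$-bushy above $\sigma_0$ and well-founded, with all leaves in~$B$. By \cref{rmk:closure_stays_inside_tree}(1) the argument there gives $R\subseteq T$. Let
\[
S = \{\rho : \rho\prec\sigma_0\} \cup R \cup \bigcup\{T\upto\tau : \tau \text{ a leaf of } R\}.
\]
Non-leaf nodes of $R$ have infinitely many children, and above the leaves we keep all of $T$, so $S$ is a Laver tree with stem~$\sigma_0$. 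Every path through $S$ passes through a leaf of $R$ (since $R$ is well-founded), hence through an element of~$U$. So $[S]\subseteq\mathcal{U}$, $S\le_0 T$, and $S$ is $\Delta^1_1(X)$ since $R$ and $T$ are.

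\emph{Case 2: $\sigma_0\notin\cl(B)$.} Let $S=\{\rho:\rho\prec\sigma_0\}\cup\{\tau\in T: \tau\succeq\sigma_0,\ (\forall \rho)(\sigma_0\preceq\rho\preceq\tau \to \rho\notin\cl(B))\}$. By \cref{lem:characterisation_of_omega-closed} and \cref{prop:closure_operator}(2), $\cl(B)$ is $\omega$-closed. So any $\tau\in T\setminus\cl(B)$ has only finitely many children in $\cl(B)$, hence infinitely many children in $T\setminus \cl(B)$. Thus $S$ is a Laver tree with stem $\sigma_0$ and $S\le_0 T$. Since $B\subseteq\cl(B)$, no node of $S$ above $\sigma_0$ extends an element of $U$. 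Nodes below $\sigma_0$ are prefixes of nodes in $S$, so if one of them extended some $\rho\in U$, then $\sigma_0$ would too, putting $\sigma_0\in B$, which is impossible. Hence $[S]\cap\mathcal{U}=\emptyset$. Here $S$ is only $\Delta^1_1$ in $\cl(B)$, i.e.\ $\Pi^1_1(X)$-computable, which is why the statement demands no definability in this case.

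\emph{Where $\atr$ enters.} The hypothesis is needed to carry this out inside the model. We need $\cl(B)$ to exist as a set for Case~2, which follows from $\Pi^1_1$-comprehension for this specific set. Under $\atr$ we can instead obtain it via the rank analysis: by \cref{prop:closure_of_Pi11_sets}(1), ranks are bounded by $\omega_1^{X}$, and $\atr$ lets us iterate the rank-$\alpha$ operator (an arithmetic step) along well-orderings. Alternatively, we can run the dichotomy directly, which I expect to be the main obstacle. For that, consider the $\Sigma^1_1$ statement ``there is a Laver $S\le_0T$ avoiding $\mathcal{U}$'' and use $\Sigma^1_1$-separation / the $\Sigma^1_1$ choice available in $\atr$ to produce either the avoiding tree or, via the ranking, a well-founded bushy witness whose rank function exists by arithmetic transfinite recursion along its Kleene–Brouwer order. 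The point to check carefully is that the case distinction (bigness of $B$) is decided correctly in the model and that the witness in Case~1 really is $\Delta^1_1(X)$. The latter holds because it was built from $B_s$ at a computable stage, uniformly as in \cref{prop:closure_of_Pi11_sets}.
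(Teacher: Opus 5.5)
Your two cases correctly prove the \emph{external} statement, with true well-foundedness, true $\lDelta{1}{1}(X)$ and the true closure $\cl(B)$. But that version needs no hypothesis at all, and it is not what the proposition is for. The proposition has to be a theorem of $\atr$, because it is applied inside an $\omega$-model $\ideal{I}$ of $\atr$ in \cref{lem:atr:forcing_totality_or_divergence}, where the tree in the second alternative must be an element of $\ideal{I}$.

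Formalised in $\atr$, your Case 2 breaks down: your $S$ is defined from $\cl(B)$, and $\omega$-bigness is $\Pi^1_1$-complete. Concretely, take $T=\omega^{<\omega}$, fix a uniform list $(T_e)$ of the $X$-computable trees, and place Marcone's set $\omega^{<\omega}\setminus T_e^+$ above each string $\langle e,0\rangle$. Then the root is not in $\cl(B)$, yet your $S$ contains $\langle e,0\rangle$ if and only if $T_e$ is ill-founded. So $S$ computes a $\Pi^1_1(X)$-complete set, and it lies in no $\omega$-model of $\atr$ that omits $\mathcal{O}^X$.

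The repair you suggest fails for the same reason. Iterating the rank operator along a well-ordering $L$ only detects ranks below the order type of $L$. The ranks occurring in $\cl(B)$ can be cofinal in $\omega_1^X$, and such a model has no genuine well-ordering that long.

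Case 1 has a similar problem: it cannot simply cite \cref{prop:closure_of_Pi11_sets}(2), whose proof uses admissibility of the true $\omega_1^X$. Inside the model, ``well-founded'' only means ``no path in the model''.

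You also misidentify why the second bullet drops definability. It is not because $S$ is merely $\Pi^1_1(X)$-computable; such a tree would typically not lie in $\ideal{I}$ at all. The real reason is explained below.

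The missing idea is $\Sigma^1_1(X)$-bounding, which is available in $\atr$ because the $X$-computable well-orderings do not form a $\Sigma^1_1(X)$ set. Do not split on $\omega$-bigness. Instead split on whether some $X$-computable well-ordering $L$ gives $\rk^L_B(\stem{T})<\infty$, where $\rk^L_B$ is the ranking along $L$; it exists by $\atr$ and is uniformly $\lDelta{1}{1}(X)$.

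\emph{If such an $L$ exists.} The strings above $\stem{T}$ along which $\rk^L_B$ strictly decreases form a $\lDelta{1}{1}(X)$ bushy witness, and your Case 1 construction finishes the job.

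\emph{If no such $L$ exists.} Consider the $\Sigma^1_1(X)$ set of $X$-computable linear orders admitting a $B$-ranking that does not reach $\stem{T}$. It contains every $X$-computable well-ordering, so by bounding it contains some ill-founded $L$. Fix such a ranking and a descending sequence $(\gamma_n)$ in $L$, both in the model. Then $\rk^L_B(\sigma)>\gamma_n$ implies $\rk^L_B(\concat{\sigma}{k})>\gamma_{n+1}$ for almost all $\concat{\sigma}{k}\in T$. Now let $S$ consist of the strings below the stem together with those $\sigma\in T$ such that $\rk^L_B(\rho)>\gamma_{|\rho|}$ for all $\stem{T}\preceq\rho\preceq\sigma$. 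This is a Laver tree $S\le_0 T$, computable from sets in the model and disjoint from $B$, so $[S]\cap\mathcal{U}=\emptyset$ in the true sense.

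This $S$ can fail to be $\lDelta{1}{1}(X)$ only because $(\gamma_n)$ can, and that is the real reason the second bullet asks for no definability. Your closing remark about $\Sigma^1_1$ principles points in this direction, but it never produces this tree.
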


(Recall that $S\le_0 T$ means that $S\subseteq T$ and $\stem{S} = \stem{T}$). 
	
\begin{proof}
	Let $B\le_\T X$ be a name of $\mathcal{U}$: an upwards-closed subset of $\omega^{<\omega}$ with $\mathcal{U} = [B]^\prec$. Let $T$ be an $X$-computable Laver tree; let $C = B\cap T$.  

	The important fact is that $\atr$ proves $\Sigma^1_1(X)$ bounding: if $A$ is a $\Sigma^1_1(X)$ set of $X$-computable indices for $X$-computable well-orderings, then there is some $X$-computable well-ordering which is longer than any of the orderings with indices in~$A$. This is because $\atr$ proves that the collection of $X$-computable indices of well-orderings is not $\Sigma^1_1(X)$ (\cite[Lem.\:VIII.3.4]{simpson}), and that any two well-orderings are comparable. 

	For any $X$-computable well-ordering~$L$, we can define a ranking $\rk^L_C$ along~$L$, in the same way that we defined $\rk_C$ above; for all $\gamma\in L$, the collection of $\sigma$ satisfying $\rk^L_C(\sigma)\le \gamma$ is a set, uniformly in~$\gamma$. $\atr$ proves existence and uniqueness of such ranking. Further, $\atr$ proves that sets of strings with $\rk^L_C(\sigma)\le \gamma$ are $\Delta^1_1(X)$, uniformly in~$\gamma$ (and an index for~$L$). 

	For each $X$-computable~$L$ there are two possibilities: either $\stem{T}$ is ranked, or the elements of~$L$ ``run out'' before we can reach $\stem{T}$, in which case we write $\rk^L_{C}(\stem{T}) = \infty$. 

	If there is some $X$-computable~$L$ such that $\rk_C^L(\stem{T})<\infty$, then just as in the proof of \cref{lem:equivalence_of_ranked_and_bigness}, the collection~$R$ of sequences $\sigma\succeq \stem{T}$ which have  strictly decreasing $\rk^L_{C}$-ranks is $\omega$-bushy above~$\stem{T}$, is well-founded, has leaves in~$C$, and is $\Delta^1_1(X)$. We let~$S$ be the result of adding to each leaf of~$R$ all extensions in~$T$; then $S\le_0 T$ is $\Delta^1_1(X)$ and $[S]\subseteq \+U$. 

	Suppose that for every $X$-computable well-ordering~$L$ we have $\rk^L_C(\stem{T})= \infty$. The property ``there is a $C$-ranking along~$L$ that does not reach $\stem{T}$'' is $\Sigma^1_1(X)$, and so by the $\Sigma^1_1(X)$ bounding principle, there is some $X$-computable linear ordering~$L$ that is ill-founded, and some ranking function $\rk_C^L$ satisfying the definition of $C$-ranking, such that $\rk^L_C(\stem{T})= \infty$. 

	Let $(\gamma_n)$ be an infinite descending sequence in~$L$. For every $\sigma\in T$ extending $\stem{T}$, if $\rk^L_C(\sigma)>\gamma_n$ (meaning that it is not the case that $\rk^L_C(\sigma)\le \gamma_n$) then for almost all~$k$ such that $\concat{\sigma}k\in T$ we have $\rk^L_{C}(\concat{\sigma}k)>\gamma_{n+1}$. 

	We thus define $S\le_0 T$ to consist of all $\sigma\in T$ such that for all~$\rho$ with $\stem{T}\preceq \rho\preceq \sigma$ we have $\rk^L_{B\cap T}(\rho)> \gamma_{|\rho|}$. Then~$S$ is a Laver tree, and since $\rk^L_{C}(\sigma)>0$ for all $\sigma\in S$, we have $S\cap C = \emptyset$, so $[S]\cap \mathcal{U} = \emptyset$. 

	Note that in this case we are not guaranteed that $S$ is $\Delta^1_1(X)$; the linear ordering~$L$ is $X$-computable, but the descending sequence $(\gamma_n)$ may fail to be $\Delta^1_1(X)$. 
\end{proof}
	
\begin{remark} 
The proof of \cref{prop:ATR:Laver:fund} actually yields a slightly stronger result relating Laver and Hechler trees; see \cref{rmk:Hechler_in_atr}. The second author and Marcone have shown in \cite{laverpartition} that this strengthening is actually equivalent to $\atr$. 	
\end{remark}

We will apply \cref{prop:ATR:Laver:fund} inside an $\omega$-model $\ideal{I}$ of $\atr$. Several of the notions involved are \emph{not} absolute between $\ideal{I}$ and~$V$: for example, we have $S\in \Delta^1_1(X)$ \emph{in the sense of~$\ideal{I}$}, meaning $S$ is computable from some transfinite iteration of the Turing jump of~$X$, along a linear ordering that $\ideal{I}$ thinks is well-founded. Further, $\ideal{I}\models [S]\subseteq \+U$ only means $[S]\cap \ideal{I}\subseteq \+U$, and does not imply $[S]\subseteq \+U$. In the proof, the $\omega$-bushy tree~$R$ may be ill-founded, just not contain infinite paths in~$\ideal{I}$. On the other hand, in the other case, we constructed a tree~$S$ with $S\cap B = \emptyset$, which means that $[S]\cap \+U = \emptyset$, not only in the sense of~$\ideal{I}$.

We observe, however, that if $[S]\cap \ideal{I}\subseteq \mathcal{U}$, then $S\force_{\laver(\ideal{I})} x_G\in \mathcal{U}$ (even though it may not strongly force this fact): For all $S'\subseteq S$ in $\laver(\ideal{I})$ we have $[S']\cap \mathcal{U}\ne \emptyset$ (as $[S']\cap \ideal{I}\ne \emptyset$), so we can extend $\stem{S'}$ to some string that is contained in~$\mathcal{U}$, and take the full subtree. The next lemma lifts this to the next level in the Borel hierarchy. In computability theory, it is often called ``forcing totality or divergence'', since $\Pi^0_2$ sets (effectively $G_\delta$ sets) are the domains of Turing functionals (partial computable functions on Baire space or Cantor space). 

	\begin{lemma} \label{lem:atr:forcing_totality_or_divergence}
		Suppose that $\ideal{I}$ is a countable $\omega$-model of $\atr$. Let $\mathcal{H}$ be an $\ideal{I}$-computable $\bPi{0}{2}$ set (a $G_\delta$ set with a name in~$\ideal{I}$), and let $T\in \laver({\ideal{I}})$. There is some $S\le T$ in $\laver({\ideal{I}})$ that decides $x_G\in \mathcal{H}$, indeed, that either forces $x_G\in \mathcal{H}$ or strongly forces $x_G\notin \mathcal{H}$. 
	\end{lemma}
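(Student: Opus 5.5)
Write $\mathcal{H} = \bigcap_n \mathcal{U}_n$, where $(\mathcal{U}_n)$ is a sequence of open sets whose names lie in some $X \in \ideal{I}$. We may assume the sequence is decreasing, by replacing $\mathcal{U}_n$ with $\mathcal{U}_0\cap\dots\cap\mathcal{U}_n$. The plan is a fusion argument. Using \cref{lem:fusion} and the partition of \cref{def:fusion_partition}, we apply \cref{prop:ATR:Laver:fund} (inside $\ideal{I}$) node by node, and at each node we try to force $x_G\in\mathcal{U}_n$ for the current $n$.

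First, a case split. Suppose some $T'\le T$ in $\laver(\ideal{I})$ has no extension $S'\le T'$ with $[S']\cap\mathcal{H}=\emptyset$ of the relevant kind, more precisely, every extension of $T'$ can be further extended into $\mathcal{U}_n$ for each $n$. Then the remark before the lemma shows $T'$ forces $x_G\in\mathcal{H}$: by density, for each $n$ the conditions with $[S]\cap\ideal{I}\subseteq\mathcal{U}_n$ are dense below $T'$, and such conditions force $x_G\in\mathcal{U}_n$. Otherwise we aim for strong forcing of $x_G\notin\mathcal{H}$.

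The concrete construction is this. Build $T_0=T\ge_0 T_1\ge_1 T_2\ge_2\cdots$. At stage $i$, for each $\tau\in P_i(T_i)$ apply \cref{prop:ATR:Laver:fund} to the tree $(T_i)^i_\tau$ and the open set $\mathcal{U}_{n}$, where $n$ is the number of times $\tau$ (or its ancestors along the construction) have already succeeded. This yields $S_\tau\le_0 (T_i)^i_\tau$ that is either inside $\mathcal{U}_n$ (and $\Delta^1_1(X)$ in the sense of $\ideal{I}$) or disjoint from $\mathcal{U}_n$. Put $T_{i+1}=\bigcup_\tau S_\tau$. Since $P_i(T_i)$ is finite and each $S_\tau$ has stem $\tau$, we get $T_{i+1}\le_i T_i$.

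Whenever some node gets the answer ``disjoint from $\mathcal{U}_n$'' (so disjoint from $\mathcal{H}$), we could simply stop and take that full subtree, $S=S_\tau$, which strongly forces $x_G\notin\mathcal{H}$. This is cleaner, so the plan is really a search. Starting from $T$, repeatedly apply \cref{prop:ATR:Laver:fund} with $\mathcal{U}_0,\mathcal{U}_1,\dots$ to the current subtree. If at any point the second alternative occurs, stop with strong forcing of non-membership. If the first alternative always occurs, we get $S_0\ge_0 S_1\ge\cdots$ with $[S_n]\cap\ideal{I}\subseteq\mathcal{U}_n$, but we need a single condition in $\ideal{I}$. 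So instead we must argue per $n$ from the top: apply the dichotomy to $T$ and $\mathcal{U}_n$ for each $n$ separately. If for some $n$ the second alternative holds, we are done. If for every $n$ and every $T'\le T$ in $\laver(\ideal{I})$ the first alternative can be obtained, then the sets $\{S : [S]\cap\ideal{I}\subseteq\mathcal{U}_n\}$ are dense below $T$, and by the remark preceding the lemma, $T$ forces $x_G\in\mathcal{U}_n$ for all $n$, hence $T$ forces $x_G\in\mathcal{H}$.

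The main obstacle is the uniformity and absoluteness issue. The dichotomy of \cref{prop:ATR:Laver:fund} gives the ``disjoint'' tree $S$ possibly outside $\Delta^1_1(X)$, so we must ensure $S\in\ideal{I}$. Here we use that $\ideal{I}\models\atr$, which means the proposition is applied inside $\ideal{I}$ and its witness $S$ exists in $\ideal{I}$. We then check that $S\cap B_n=\emptyset$ is absolute, which the paper notes, so that the non-membership is genuinely strong forcing.
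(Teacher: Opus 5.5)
Your final argument is correct, but it takes a different and lighter route than the paper.

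\textbf{The paper's route.} In the case where no extension of $T$ in $\laver(\ideal{I})$ forces $x_G\notin\mathcal{U}_n$ for any $n$, the paper builds a fusion sequence. At stage $i$ it applies \cref{prop:ATR:Laver:fund} inside $\ideal{I}$ to each piece $(T_i)^i_\tau$, $\tau\in P_i(T_i)$, with the open set $\mathcal{U}_i$. It then gets the whole sequence $(T_i)$ into $\ideal{I}$ via $\Sigma^1_1$-dependent choice in $\Delta^1_1(X)$, which follows from $\atr$ plus $\Pi^1_1$-induction in an $\omega$-model. Getting a single condition in $\ideal{I}$ is exactly the obstacle you hit with your $\ge_0$-chain $S_0\ge S_1\ge\cdots$.

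\textbf{Your route.} You sidestep that obstacle. In that case the conditions forcing $x_G\in\mathcal{U}_n$ are dense below $T$ for each $n$, so $T$ itself forces $x_G\in\mathcal{H}$. This proves the lemma as stated, with no fusion and no dependent choice. In fact \cref{prop:ATR:Laver:fund} is not needed either. If $[T']\cap\mathcal{U}_n\neq\emptyset$ for some $T'\le T$, take $x$ in the intersection and $\rho\prec x$ with $\rho\succeq\stem{T'}$ and $[\rho]\subseteq\mathcal{U}_n$. Then $T'\upto\rho\in\laver(\ideal{I})$ strongly forces $x_G\in\mathcal{U}_n$. So the statement, read literally, holds over any countable Turing ideal.

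\textbf{What each buys.} The paper's fusion gives a stronger conclusion: a single $S\le_0 T$ with $[S]\cap\ideal{I}\subseteq\mathcal{U}_i$ for every $i$, i.e.\ $[S]\cap\ideal{I}\subseteq\mathcal{H}$. That is the form the paper actually invokes in the proof of \cref{lem:atr:LaverProperty} (``by assumption, $[T]\cap\ideal{I}\subseteq\bigcup_{m<h(i)}V_m$''), and it is where $\atr$ does real work. Your route gives exactly the stated forcing conclusion and no more.

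\textbf{A small tidy-up.} State your final case split as: either some $T'\le T$ in $\laver(\ideal{I})$ and some $n$ satisfy $[T']\cap\mathcal{U}_n=\emptyset$, or not. As written, you contrast the second alternative for $T$ itself with the first alternative for every $T'\le T$, and those two cases are not literally exhaustive. The missing case is handled the same way.
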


	\begin{proof}
		Write $\mathcal{H} = \bigcap_n \mathcal{U}_n$, with $(\mathcal{U}_n)\in \ideal{I}$. If there is some~$n$ and some $S\le T$ in $\laver({\ideal{I}})$ such that $S\force x_G\notin \mathcal{U}_n$ then we are done. Suppose otherwise. Fix some $X\in \ideal{I}$ that computes~$T$ and a name of~$\+H$. Work inside~$\ideal{I}$. By \cref{prop:ATR:Laver:fund}, for every $S\le T$ that is $\Delta^1_1(X)$, and all~$n$, there is some $R\le_0 S$ that is $\Delta^1_1(X)$ such that $R\force x_G\in \mathcal{U}_n$. 

		We define a fusion sequence $(T_i)$ starting with $T_0 = T$. Let $i<\omega$ and suppose that~$T_i$ has been defined. Recall the trees $(T_i)^i_\tau$ for $\tau\in P_i(T_i)$ that partition $[T_i]$ into finitely many pieces (\cref{def:fusion_partition}). For each $\tau\in P_i(T_i)$, we find some $S^i_\tau\le_0 (T_i)^i_\tau$ such that $[S^i_\tau]\subseteq \+U_i$ (again, in the sense of $\ideal{I}$), and we let $T_{i+1} = \bigcup_{\tau\in P_i(T_i)} S^i_\tau$. Then $T_{i+1}\le_i T_i$ and $[T_{i+1}]\subseteq \+U_i$. 

		We can ensure that each $T_i$ is $\Delta^1_1(X)$; however, we need the entire sequence $(T_i)$ to exist. By \cite[Thm.\:VIII.4.11]{simpson}, $\atr$ together with $\Pi^1_1$-induction proves that for any $X$, the collection $\Delta^1_1(X)$ satisfies $\Sigma^1_1$-dependent choice, essentially because of $\Sigma^1_1(X)$-bounding: we can perform an ``effective'' construction in the sense of higher computability within $\Delta^1_1(X)$. Since $\ideal{I}$ is an $\omega$-model, it satisfies $\Pi^1_1$-induction. Hence, we can apply $\Sigma^1_1$-dependent choice in $\Delta^1_1(X)$ (again, in the sense of~$\ideal{I}$), and get the entire sequence $(T_i)$ to be $\Delta^1_1(X)$, in particular, $(T_i) \in \ideal{I}$. 

		We can therefore let $S = \bigcap_i T_i$; it is in~$\ideal{I}$. By \cref{lem:fusion}, $S$ is a Laver tree. 
		For all $i$, $S\le T_{i+1}$. We have $S\force_{\laver(\ideal{I})} x_G\in \+U_i$, so overall, $S\force_{\laver(\ideal{I})}x_G\in \+H$. 
	\end{proof}

	We recall the notion of a name (or a code) of a partial continuous function~$\Phi$ on Baire space: a set of pairs $(\sigma,\tau)$ such that $\Phi[\sigma]\subseteq [\tau]$, sufficient to determine the function. When such a name is computably enumerable, it is sometimes called a \emph{Turing functional}. As with the notions of $F_\sigma$ and $G_\delta$ sets, we say that a continuous function is $\ideal{I}$-computable if it has a name in~$\+I$. We will use notation from computability, and use~$\Phi$ to denote both the function and some name for it. For $\rho\in \omega^{\le \omega}$, we write $\Phi(\rho,m)=n$ when there is some pair $(\sigma,\tau)$ in the name such that $\rho\succeq \sigma$ and $\tau(m)=n$. We write $\Phi(\rho,m)\converge$ if $\Phi(\rho,m)=n$ for some~$n$. We let $\Phi(\sigma)$ be the longest string $\tau$ with $|\tau|\le |\sigma|$ and $\Phi(\sigma,m) = \tau(m)$ for all $m<|\tau|$. 	For $x\in \baire$, we say that $\Phi(x)$ is \emph{total} if $\Phi(x,m)\converge$ for all~$m$. The domain of~$\Phi$ --- the collection of~$x$ for which $\Phi(x)$ is total --- is $G_\delta$, indeed $\Pi^0_2$ relative to a name of~$\Phi$. For any Turing ideal~$\ideal{I}$ and any~$x$, the elements of $\ideal{I}[x]$ are precisely the reals $\Phi(x)$, where $\Phi$ is $\ideal{I}$-computable (has a name in~$\ideal{I}$) and $\Phi(x)$ is total. 

	Toward the Laver property for $\laver(\ideal{I})$ (\cref{def:Laver_property}) we use some notation first introduced in \cite{GM:dimension}. For a function $h\in \baire$, we let 
	\[
		h^\omega = \left\{ x\in \baire  \,:\,  x< h \right\}, 
	\]
	also sometimes denoted by $\prod_n h(n)$. If for some bound~$h$ we have $\Phi\colon \baire \to h^\omega$, then we will have $\Phi(x,m)<h(m)$ whenever it is defined. 

	\begin{lemma} \label{lem:atr:LaverProperty}
		If $\ideal{I}$ is a countable $\omega$-model of $\atr$, then $\laver(\ideal{I})$ has the Laver property over~$\ideal{I}$. 
	\end{lemma}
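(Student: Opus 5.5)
We show that for every $T\in\laver(\ideal{I})$, every $\ideal{I}$-computable $\Phi$ and every $h\in\ideal{I}$, there is $S\le T$ in $\laver(\ideal{I})$ which either strongly forces that $\Phi(x_G)$ is not a total function bounded by~$h$, or admits a trace $(F_m)\in\ideal{I}$ with $\Phi(x,m)\in F_m$ for all $x\in[S]$ and all $m$ at which this is defined. A density argument then yields the Laver property. Indeed, if $g=\Phi(x_G)\in\ideal{I}[x_G]$ is total and $\ideal{I}$-dominated, then some $h\in\ideal{I}$ bounds it everywhere (after finitely many modifications, which are absorbed into $\Phi$). For sufficiently generic $G$, the first alternative is impossible, so the second holds and gives a trace of~$g$ in~$\ideal{I}$.

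Fix $T$, $\Phi$, $h$. We may first thin $T$ so that the composite $x\mapsto(\Phi(x),[\Phi(x)<h])$ is handled uniformly. Let $\mathcal{H}$ be the $\ideal{I}$-computable $\bPi{0}{2}$ set of $x$ with $\Phi(x)$ total and $\Phi(x)<h$; more precisely, take the $G_\delta$ set $\mathcal{H}=\bigcap_m\mathcal{U}_m$, where $\mathcal{U}_m$ is the open set of $x$ with $\Phi(x,j)\converge<h(j)$ for all $j\le m$. By \cref{lem:atr:forcing_totality_or_divergence}, we may extend $T$ to a tree that strongly forces $x_G\notin\mathcal{H}$, in which case we are done, or that forces $x_G\in\mathcal{H}$. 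In the latter case we use \cref{prop:ATR:Laver:fund} repeatedly, as in the proof of \cref{lem:atr:forcing_totality_or_divergence}. Because no extension avoids $\mathcal{U}_m$, each application yields a $\Delta^1_1(X)$ extension $\le_0$ the given tree with all paths (in the sense of~$\ideal{I}$) in~$\mathcal{U}_m$. On such a tree $R$, the value $\Phi(x,m)$ is determined by a finite initial segment of every branch. Since $\Phi(x,m)<h(m)$, we then partition: for each $k<h(m)$, the set of $x$ with $\Phi(x,m)=k$ is open, and a finite union argument via \cref{lem:concatenation_and_preservation_of_smallness}(2) applies. Concretely, apply \cref{prop:ATR:Laver:fund} successively to the open sets $\{x:\Phi(x,m)=k\}$ for $k<h(m)$. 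Since $\stem{R}$ is preserved and the union of these finitely many open sets covers $[R]\cap\ideal{I}$, some $k$ must land in the first alternative. This yields $R'\le_0 R$ on which $\Phi(\cdot,m)=k$ is forced, a single value.

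We now build a fusion sequence $T_0\ge_0T_1\ge_1T_2\ge_2\cdots$. At stage $i$, for each of the $2^i$ nodes $\tau\in P_i(T_i)$ (recall $|P_i(T_i)|=2^i$), we shrink $(T_i)^i_\tau$ with the same stem $\tau$ to a $\Delta^1_1(X)$ tree on which $\Phi(x,i)$ takes a single value $k_\tau$. We then set $T_{i+1}=\bigcup_\tau S^i_\tau$ and $F_i=\{k_\tau:\tau\in P_i(T_i)\}$, so that $|F_i|\le 2^i$. Because $T_{j}\le_i T_{i+1}$ for all $j>i$ and the pieces $[T^i_\tau]$ partition, every branch of the fusion $S$ lies in some $[S^i_\tau]$, so $\Phi(x,i)\in F_i$ (for $x\in\ideal{I}$, and hence forced for generics). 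The existence of the whole sequence $(T_i)$, together with $(F_i)$, inside $\ideal{I}$ follows from $\Sigma^1_1$-dependent choice in $\Delta^1_1(X)$, exactly as in \cref{lem:atr:forcing_totality_or_divergence}. By \cref{lem:fusion}, $S$ is a Laver tree in~$\ideal{I}$.

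The main obstacle is the non-absoluteness point. Alternatives from \cref{prop:ATR:Laver:fund} only give ``$[R]\cap\ideal{I}\subseteq\mathcal{U}$'', so the trace property is forced rather than strongly forced. We must check that the non-chosen values $k$ can legitimately be excluded. Here we rely on the observation after \cref{prop:ATR:Laver:fund} that such trees force membership in the open set. Since the open sets $\{\Phi(\cdot,i)=k\}$ are pairwise disjoint, forcing one value $k_\tau$ on $S^i_\tau$ is enough for sufficiently generic $G$ through $S$.
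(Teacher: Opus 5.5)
Your proposal is correct and follows the paper's proof essentially step for step. You force totality (and the bound $h$) with \cref{lem:atr:forcing_totality_or_divergence}. At stage $i$ you shrink each of the $2^i$ pieces $(T_i)^i_\tau$, keeping the stem, to a tree that forces a single value of $\Phi(\cdot,i)$, and you collect these values into $F_i$. Finally you get the whole fusion sequence into $\ideal{I}$ by $\Sigma^1_1$-dependent choice in $\Delta^1_1(X)$.

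Two small points. First, your opening claim that $\Phi(x,m)\in F_m$ for all $x\in[S]$ is too strong; it should read ``$S$ forces this'', as you yourself note at the end. Second, the dependent-choice step needs each $S^i_\tau$ to be $\Delta^1_1(X)$. For that, use the finite-union property of ranks along a single $X$-computable well-ordering rather than successive applications of \cref{prop:ATR:Laver:fund}, because that proposition's second-alternative trees need not be $\Delta^1_1(X)$.
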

	
	\begin{proof}
		Let $h\in \ideal{I}$ be a bound, and let $\Phi\colon \baire\to h^\omega$ be $\ideal{I}$-computable. We need to show that if $G\subset \laver(\ideal{I})$ is sufficiently generic and $\Phi(x_G)$ is total then $\Phi(x_G)$ has a trace in~$\ideal{I}$. Let $T\in \laver(\ideal{I})$. By \cref{lem:atr:forcing_totality_or_divergence}, we may assume that~$T$ forces that $\Phi(x_G)$ is total. 

		\smallskip

		We define a fusion sequence $(T_i)$ starting with $T_0 = T$. Given~$T_i$, we again consider the partition $(T_i)^i_\tau$ for $\tau\in P_i(T_i)$. For each $m< h(i)$ let 
		\[
			V_m = \left\{ x\in \baire \,:\,  \Phi(x,i)=m \right\}. 
		\]
		Now by assumption, $[T]\cap \ideal{I}\subseteq \bigcup_{m<h(i)} V_m$. For each $\tau\in P_i(T_i)$, we find $S^i_\tau\le_0 (T_i)^i_\tau$ in~$\ideal{I}$ such that for some $m = m^i_\tau$ we have $[S^i_\tau]\cap \ideal{I}\subseteq V_m$. We let $T_{i+1} = \bigcup_{\tau\in P_i(T_i)} S^i_\tau$; we let $S = \bigcap_i T_i$; as in the previous proof, we can arrange so that $S\in \ideal{I}$. For each~$i$ we let
		\[
			F_i = \left\{ m^i_\tau  \,:\,  \tau\in P_i(T_i) \right\}. 
		\]
		Then $|F_i| \le |P_i(T_i)| = 2^i$ (see \cref{def:P_i_of_a_Laver_tree}) and $(F_i)\in \ideal{I}$ since the entire construction lives inside~$\ideal{I}$; and $S$ forces that $(F_i)$ is a trace of $\Phi(x_G)$. 
	\end{proof}
	
	Now \cref{prop:laver_adds_dominating_functions,cor:Laver_property_and_SME} imply:

	\begin{proposition} \label{prop:atr:separation_via_laver}
		If $\ideal{I}$ is a countable $\omega$-model of~$\atr$ and $G\subset \laver(\ideal{I})$ is sufficiently generic, then $x_G\in \domm{\ideal{I}}\setminus \SME{\ideal{I}}$. 	
	\end{proposition}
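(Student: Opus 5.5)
The plan is to combine the two halves already proved for a sufficiently generic $G\subset\laver(\ideal{I})$: \cref{prop:laver_adds_dominating_functions} places $x_G$ in $\domm{\ideal{I}}$, and \cref{lem:atr:LaverProperty} together with \cref{cor:Laver_property_and_SME} keeps $x_G$ out of $\SME{\ideal{I}}$. The only work is to fix one countable family of dense sets that does both jobs at once.

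First, membership in $\domm{\ideal{I}}$. For each $f\in\ideal{I}$, the proof of \cref{prop:laver_adds_dominating_functions} shows that the set of conditions which strongly force ``$x_G$ dominates $f$'' is dense in $\laver(\ideal{I})$. Given any $T$, the thinned tree $S$ built there stays in $\ideal{I}$, because it is computable from $T\oplus f$. Since $\ideal{I}$ is countable, these are countably many dense sets. Meeting all of them makes $x_G$ itself a function in $\ideal{I}[x_G]$ dominating every member of $\ideal{I}$, so $x_G\in\domm{\ideal{I}}$.

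Second, non-membership in $\SME{\ideal{I}}$. \Cref{lem:atr:LaverProperty} gives the Laver property over $\ideal{I}$. Concretely, for each pair $(h,\Phi)$ of a bound $h\in\ideal{I}$ and an $\ideal{I}$-computable $\Phi\colon\baire\to h^\omega$, there is a dense set of conditions that either force $\Phi(x_G)$ partial or force that some trace in $\ideal{I}$ captures $\Phi(x_G)$. These are again countably many, since each pair has a name in $\ideal{I}$. Meeting them yields the conclusion of \cref{lem:Laver_property_and_generics}: no element of $\ideal{I}[x_G]$ is $\ideal{I}$-Cohen. Every member of $\SME{\ideal{I}}$ $\ideal{I}$-computes an $\ideal{I}$-Cohen real, via the effective morphism from $\spill{\ideal{M}}$ to $\capture{\ideal{M}}$. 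Hence \cref{cor:Laver_property_and_SME} gives $x_G\notin\SME{\ideal{I}}$.

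The one point to check is the meaning of ``sufficiently generic'': one filter must meet both families simultaneously. That holds because their union is still countable, so a generic filter exists by the usual Rasiowa--Sikorski argument. Beyond this bookkeeping there is no real obstacle, as all the $\atr$-dependent work was done in \cref{lem:atr:forcing_totality_or_divergence,lem:atr:LaverProperty}.
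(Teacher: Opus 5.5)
Your proposal is correct and is the same argument as the paper, which obtains the proposition directly from \cref{prop:laver_adds_dominating_functions} and \cref{cor:Laver_property_and_SME}, with the Laver property over $\ideal{I}$ supplied by \cref{lem:atr:LaverProperty}. Your point about meeting both countable families of dense sets with one filter is the bookkeeping that the paper leaves implicit in the phrase ``sufficiently generic''.
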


\subsection{The insufficiency of Laver forcing over $\hyp$}

Dan Turetsky showed that \cref{prop:atr:separation_via_laver} fails when $\ideal{I}= \hyp$  is the ideal of hyperarithmetic sets. The pertinent facts are:
\begin{itemize}
	\item If $x$ computes a list of reals containing all $\Delta^1_1$ reals, then $x\in \SNE{\hyp}$. 
	\item If Kleene's $\+O$ is $\Sigma^0_2(x)$, and $x \in \domm{\hyp}$, then $x$ computes a list containing all $\Delta^1_1$ reals. 
\end{itemize}  	

See \cite{listing} for details. 

\begin{proposition}[Turetsky]\label{prop:dans}
	For all sufficiently generic $G\subset \laver({\hyp})$, Kleene's~$\+O$ is $\Sigma^0_2(x_G)$. 
\end{proposition}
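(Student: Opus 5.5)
The plan is to find, uniformly in~$e$, a relation $R(e,n,m)$ computable in~$x_G$ with
\[
	e\in \+O \iff (\exists n)(\forall m)\,R(e,n,m).
\]
The positive direction will use that $x_G$ dominates every hyperarithmetic function (\cref{prop:laver_adds_dominating_functions}). The negative direction will be a density argument in $\laver(\hyp)$ that uses the bigness machinery of \cref{subsect:big}.

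\emph{Step 1 (the relation).} Fix a computable sequence of computable trees $W_e\subseteq \omega^{<\omega}$ such that $e\in\+O$ iff $W_e$ is well-founded. Form $W_e^+$ and $B_e=\omega^{<\omega}\setminus W_e^+$ as in the proof of Marcone's proposition, so that $e\in \+O$ iff $B_e$ is $\omega$-big above $\rooot$. For $e\in\+O$, the rank function of $W_e$ is hyperarithmetic, with rank some computable ordinal~$\alpha_e$. By the remark in the introduction, a function dominating all $\Delta^1_1$ functions computes a set $=^*\emptyset^{(a)}$, uniformly in a notation~$a$. I will use this self-modulus idea: from $x_G$ and a starting point~$n$, define a guessing procedure $\Gamma^{x_G}_n$ for the ranked well-founded part of~$W_e$. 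This procedure uses $x_G\upto[n,\infty)$ as a modulus for the transfinite rank computation. Let $R(e,n,m)$ say that this procedure has not, by stage~$m$, found a node of $W_e$ that refutes the ranking built so far. A rank assignment is only locally verifiable, so the refutation must be witnessed in a computable way; that is why I plan to phrase $R$ through the rank function rather than through paths. If $e\in \+O$, then the true modulus is a hyperarithmetic function, $x_G$ eventually dominates it, and some~$n$ makes every check succeed. This gives $(\exists n)(\forall m)R$.

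\emph{Step 2 (density for $e\notin\+O$).} This is the main obstacle. I must show that for each $e\notin\+O$, each~$n$, and each $T\in\laver(\hyp)$, some $S\le T$ in $\laver(\hyp)$ strongly forces $\neg(\forall m)R(e,n,m)$. Here $B_e$ is not $\omega$-big above $\rooot$, so it is not big above many nodes. I first try applying \cref{prop:closure_of_Pi11_sets} to the $\Pi^1_1$ set of ``refuting'' strings, relativised to~$T$. I then apply \cref{lem:concatenation_and_preservation_of_smallness}(2) to split the tree. One of two things happens. In the first case, the refuting set is $\omega$-big above $\stem{T}$ inside~$T$. Then \cref{prop:closure_of_Pi11_sets}(2) gives a $\Delta^1_1$ $\omega$-bushy witness, and adding full extensions at its leaves yields $S\le_0 T$ in $\hyp$ every path of which is refuted. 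In the second case, the refuting set is not big. Then, because $W_e$ is ill-founded while the guessing uses only hyperarithmetic information along~$T$, I would argue that the guess at~$n$ is already wrong at some finite node. This would contradict non-bigness: the Harrison-type pseudo-ordering makes any hyperarithmetic attempt to rank $W_e$ fail at a finite witness. I expect this case analysis to need the most care, and it is where the specific shape of~$R$ must be tuned.

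\emph{Step 3.} By genericity, the conditions of Step~2 are met for every pair $(e,n)$ with $e\notin\+O$. Combined with Step~1 this gives the displayed equivalence, so $\+O$ is $\Sigma^0_2(x_G)$.
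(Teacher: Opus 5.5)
Your proposal has a genuine gap, and it is where you suspected. First, the relation $R$ is never defined. ``A node of $W_e$ that refutes the ranking built so far'' has no precise meaning, so neither direction of $e\in\mathcal{O}\iff(\exists n)(\forall m)\,R(e,n,m)$ can be checked. Even the positive direction is not established: dominating a self-modulus only gives a set $=^*\emptyset^{(a)}$, so you would need to say why finitely many errors never trip a check.

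The decisive problem is Case~2 of Step~2. If the refuting set is not $\omega$-big above $\stem{T}$ inside $T$, all you obtain is a Laver tree $S\le_0 T$ with no refuting nodes. Over $\hyp$ this $S$ is in general not hyperarithmetic. This is exactly the obstruction noted at the end of the proof of \cref{prop:ATR:Laver:fund}. The paper gets around it in \cref{lem:Bushy_Laver:deciding_Pi_2} only by adding bad sets, and $\laver(\hyp)$ has none. The fact you want to contradict (``any hyperarithmetic attempt to rank $W_e$ fails'') is about hyperarithmetic objects, but nothing produced in Case~2 is hyperarithmetic. The paths of $S$ can grow arbitrarily fast, and local checks cannot detect ill-foundedness, so you have given no reason for a contradiction to appear.

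The missing idea is to use that the condition $T$ itself is hyperarithmetic; bigness is the wrong tool here. Since $\neg(\forall m)\,R(e,n,m)$ is $\Sigma^0_1(x_G)$, a single refuting node of $T$ above the stem suffices. So what you must show is that not every path of $T$ passes all the checks. For that, take the leftmost path $h$ of $T$, which is $\Delta^1_1$. Then use compactness of $h^\omega$ to turn ``no hyperarithmetic solution'' into a finite witness $h\upto n$, so that $T\upto(h\upto n)$ strongly forces the negative fact.

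This is what the paper does, but with a concrete relation in place of your $R$. By Spector--Gandy, the indices of $\Pi^0_1$ classes with a $\Delta^1_1$ member form a $\Pi^1_1$-complete set. The property ``has a member dominated by $x_G$'' is $\Sigma^0_2(x_G)$ by effective compactness, after fixing a finite initial segment. The forward implication is \cref{prop:laver_adds_dominating_functions}, and the reverse is the leftmost-path-and-compactness argument. No self-moduli, rank computations or bigness are needed.
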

	
\begin{proof}
	Let $G\subset \laver({\hyp})$ be sufficiently generic. We show that for any $\Pi^0_1$ set $\+P\subseteq \baire$, $\+P$ contains a $\Delta^1_1$ real if and only if~$\+P$ contains a real dominated by~$x_G$. By the Spector-Gandy theorem, the collection of (indices of) $\Pi^0_1$ sets $\+P$ that have hyperarithmetic elements is $\Pi^1_1$-complete, and so is recursively isomorphic with Kleene's~$\+O$. On the other hand, for any $y\in \baire$, the collection of (indices of) $\Pi^0_1$ sets which have an element \emph{majorised} by~$y$ is $\Pi^0_1(y)$, as the space $y^\omega = \prod_n y(n)$ is effectively compact relative to~$y$; it follows that the collection of $\Pi^0_1$ sets which have an element dominated by~$x$ is $\Sigma^0_2(x)$. 

	One direction of the equivalence is implied by \cref{prop:laver_adds_dominating_functions}. In the other direction, suppose that $T\subseteq \omega^{<\omega}$ is a computable tree, and that $[T]$ contains no $\Delta^1_1$ element. Then $[T]$ contains no element dominated by a~$\Delta^1_1$ real (use relative effective compactness as above, and the fact that $\hyp$ is a jump ideal). Let $S\in \laver(\hyp)$. The leftmost path~$h$ of~$S$ is $S$-computable, so is~$\Delta^1_1$. Since $h^{\omega}$ is compact, and $h^\omega\cap [T] = \emptyset$, there is some~$n$ such $[T]$ contains no element~$x$ with $x<h\upto{n}$. Then the full subtree $R = S\upto(h\upto n)$ forces that $[T]$ contains no element majorised by~$x_G$. To improve this to domination, repeat the argument, but fixing some finite initial segment.
\end{proof}

\subsection{Separating dominating from strong meagre engulfing, the general case}

Turetsky's \cref{prop:dans} shows that to show that $\SME{\hyp}\ne \domm{\hyp}$, we cannot use closed sets that all contain $\Delta^1_1$ elements. As indicated in the introduction, we use the idea of Kumabe of adding sets of ``bad strings'', as long as the sets are small. 

\begin{notation}
	For the rest of the section, we fix a countable Turing ideal $\ideal{I} \subseteq \baire$ which is $\lhyp$-downward closed.
\end{notation}
	
\begin{definition}
	We let $\bushy = \bushy(\ideal{I})$ be the collection of pairs $\bm{p}=(\tree{p},\bad{p})$ where:
		\begin{itemize}
			\item $\tree{p}\in \laver(\ideal{I})$, and
			\item  $\bad{p} \subseteq \tree{p}$ is $\omega$-closed and upward closed in~$\tree{p}$, $\stem{\tree{p}} \notin\bad{p}$, and $\bad{p}$ is $\Pi^1_1(\ideal{I})$.\footnote{As with $\le_{\T(\ideal{I})}$, $\Pi^1_1(\ideal{I})$ means $\Pi^1_1(X)$ for some $X\in \ideal{I}$.}
		\end{itemize}
		A condition $\bm{q}$ extends a condition $\bm{p}$ if $\tree{q} \subseteq \tree{p}$ and $\bad{q} \supseteq \bad{p}\cap \tree{q}$.
	\end{definition}
	
	We say that $\bad{p}$ is the set of \emph{bad strings} (or the \emph{bad set}) of $\bm{p}$. We will shortly see that $\bm{p}$ forces the generic real outside of $\open{\bad{p}}$. 

\begin{notation}
		For $\bm{p}\in \bushy$ we let 
	\[
		\stm{p} = \stem{\tree{p}}. 
	\]
	Observe that if $\bm{q}$ extends~$\bm{p}$ then $\stm{q}\succeq \stm{p}$. 
\end{notation}

	\begin{lemma}\label{lem:containslaver}
		Let $\bm{p} \in \bushy$. Then $\tree{p} \setminus \bad{p}$ is a Laver tree with stem $\stm{p}$.
	\end{lemma}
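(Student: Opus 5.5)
Let $\tau = \stm{p}$, $T = \tree{p}$ and $B = \bad{p}$. I will show three things: $\tau \in T\setminus B$; every element of $T\setminus B$ is comparable with~$\tau$ and $T\setminus B$ is closed downwards (so it is a tree); and every $\sigma\in T\setminus B$ with $\sigma\succeq\tau$ has infinitely many children in $T\setminus B$. Together these show that $T\setminus B$ is a Laver tree whose stem is~$\tau$.

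For the first two points, $\tau\notin B$ holds by the definition of a condition. Every element of $T\setminus B$ lies in~$T$ and so is comparable with~$\tau$. For downward closure, let $\sigma\in T\setminus B$ and $\rho\preceq\sigma$. Then $\rho\in T$ because $T$ is a tree. If $\rho$ were in~$B$, then, since $B$ is upward closed in~$T$ and $\sigma\in T$ extends~$\rho$, we would get $\sigma\in B$, which is a contradiction. So $\rho\notin B$.

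For the splitting property, let $\sigma\in T\setminus B$ with $\sigma\succeq\tau$. Since $T$ is a Laver tree, $\sigma$ has infinitely many children $\concat{\sigma}{n}$ in~$T$. Suppose only finitely many of them lay outside~$B$. Then infinitely many $\concat{\sigma}{n}$ would lie in~$B$, and because $B$ is $\omega$-closed this gives $\sigma\in B$, contradicting the choice of~$\sigma$. So infinitely many children of~$\sigma$ are in $T\setminus B$. In particular $\tau$ has infinitely many children in $T\setminus B$, and every node of $T\setminus B$ is comparable with~$\tau$, so the stem of $T\setminus B$ is exactly~$\tau$: it is at least~$\tau$, and it is no longer because $\tau$ splits.

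All three steps are routine. The only point needing any care is to use $\omega$-closure rather than the full closure $\cl$: a single failure of infinite branching at~$\sigma$ already places~$\sigma$ in~$B$ directly. I do not expect a genuine obstacle. (Membership of $T\setminus B$ in~$\ideal{I}$ is not part of the statement, since $B$ is only $\Pi^1_1(\ideal{I})$.)
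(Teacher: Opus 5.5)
Your proof is correct and follows the paper's own argument: downward closure of $\tree{p}\setminus\bad{p}$ from upward closure of $\bad{p}$ in $\tree{p}$, and infinite branching above the stem from $\omega$-closure of $\bad{p}$. You also spell out why the stem is exactly $\stm{p}$, which the paper leaves implicit.
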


	\begin{proof}
		The set $\tree{p} \setminus \bad{p}$ is a tree because $\tree{p}$ is a tree and $\bad{p}$ is upwards closed in~$\tree{p}$. To see that it is a Laver tree with stem $\stm{p}$, let $\sigma \succeq \stm{p}$ be in $\tree{p} \setminus \bad{p}$. Since $\tree{p}$ is a Laver tree with stem $\stm{p}$, $\sigma$ has infinitely many children in~$\tree{p}$. Since $\sigma\notin \bad{p}$ and $\bad{p}$ is $\omega$-closed, only finitely many of these children are in $\bad{p}$.
	\end{proof}
	
	For any $\bm{p}\in \bushy$ and any $\sigma\in \tree{p}\setminus \bad{p}$, 
	\[
		\bm{p}\upto{\sigma} = (\tree{p}\upto{\sigma}, \bad{p}\cap (\tree{p}\upto{\sigma}))
	\]
	is a condition in~$\bushy$ ($\bad{p}\cap (\tree{p}\upto{\sigma})$ is $\omega$-closed by \cref{rmk:closure_stays_inside_tree}(3)); it extends~$\bm{p}$. Together with \cref{lem:containslaver},  it follows that for a sufficiently generic $G\subset \bushy$, 
	\[
		x_G = \bigcup \left\{ \stm{p} \,:\,  \bm{p}\in G \right\}
	\]
	is an element of~$\baire$. 

	For $\bm{p}\in \bushy$, we let
	\[
		[\bm{p}] = [\tree{p}\setminus{\bad{p}}] = [\tree{p}]\setminus \open{\bad{p}}. 
	\]
	Observe that if $\bm{q}\le \bm{p}$ then $\tree{q}\setminus \bad{q}\subseteq \tree{p}\setminus \bad{p}$, so $[\bm{q}]\subseteq [\bm{p}]$. 
	
	\begin{lemma}\label{lem:avoidbad}
		For any $\bm{p} \in \bushy$, $\bm{p}\force_{\bushy} x_G\in [\bm{p}]$.
	\end{lemma}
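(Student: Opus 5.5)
We have to show that for sufficiently generic $G$ containing $\bm{p}$, $x_G\in[\tree{p}]$ and $x_G\notin\open{\bad{p}}$. The natural route is a density argument: it suffices that every $\bm{q}\le\bm{p}$ satisfies $\stm{q}\in\tree{p}\setminus\bad{p}$. Then for $G\ni\bm{p}$, any finite initial segment of $x_G$ lies below $\stm{q}$ for some $\bm{q}\in G$ with $\bm{q}\le\bm{p}$. Since $G$ is a filter, we may take a common extension of $\bm{p}$ and any given member of $G$, and genericity makes the stems of members of $G$ unbounded in length.

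The key step is therefore the following claim: if $\bm{q}\le\bm{p}$, then $\stm{q}\in\tree{p}\setminus\bad{p}$.
\begin{itemize}
\item We have $\stm{q}\in\tree{q}\subseteq\tree{p}$.
\item Suppose toward a contradiction that $\stm{q}\in\bad{p}$. Then $\stm{q}\in\bad{p}\cap\tree{q}\subseteq\bad{q}$, by the definition of extension in $\bushy$.
\item This contradicts the requirement $\stem{\tree{q}}\notin\bad{q}$ in the definition of a condition.
\end{itemize}

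Since $\bad{p}$ is upward closed in $\tree{p}$ and $\tree{p}\setminus\bad{p}$ is a tree (\cref{lem:containslaver}), every initial segment of $\stm{q}$ that extends $\stm{p}$ also lies in $\tree{p}\setminus\bad{p}$. Initial segments of $\stm{q}$ shorter than $\stm{p}$ are initial segments of $\stm{p}$. None of these lies in $\bad{p}$, because $\bad{p}\subseteq\tree{p}$ consists of nodes comparable with $\stm{p}$, and $\bad{p}$ is upward closed with $\stm{p}\notin\bad{p}$.

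To conclude, take $G\ni\bm{p}$ sufficiently generic; in particular $G$ meets the dense sets $\{\bm{q}:|\stm{q}|\ge n\}$, which are dense by the restrictions $\bm{q}\upto\sigma$. Then every initial segment of $x_G$ is an initial segment of some $\stm{q}$ with $\bm{q}\in G$ and $\bm{q}\le\bm{p}$. By the claim, every initial segment of $x_G$ lies in $\tree{p}$, so $x_G\in[\tree{p}]$, and no initial segment lies in $\bad{p}$, so $x_G\notin\open{\bad{p}}$. Hence $x_G\in[\bm{p}]$.

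There is no serious obstacle here. The only point needing care is the bookkeeping of initial segments shorter than $\stm{p}$, together with the observation that compatible conditions in $G$ have a common extension below $\bm{p}$.
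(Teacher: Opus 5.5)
Your proof is correct and follows essentially the same route as the paper: given $\sigma\prec x_G$, pick $\bm{q}\in G$ extending $\bm{p}$ with $\sigma\preceq\stm{q}$, and use the fact that the ``good part'' of a condition is a tree. The only cosmetic difference is that you first place $\stm{q}$ in $\tree{p}\setminus\bad{p}$ and then close downward there, whereas the paper closes downward in $\tree{q}\setminus\bad{q}$ and then uses $\tree{q}\setminus\bad{q}\subseteq\tree{p}\setminus\bad{p}$; your separate treatment of initial segments shorter than $\stm{p}$ is redundant, since $\tree{p}\setminus\bad{p}$ is already downward closed.
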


	\begin{proof}
		Let $G\subset \bushy$ be sufficiently generic, and suppose that $\bm{p}\in G$. Let $\sigma \prec x_G$. There is some $\bm{q}\in G$ with $\sigma\preceq \stm{q}$. We may assume that~$\bm{q}$ extends~$\bm{p}$. Since $\stm{q}\in \tree{q}\setminus \bad{q}$, we have $\sigma\in \tree{q}\setminus \bad{q}$; since $\bm{q}\le \bm{p}$, $\sigma\in \tree{p}\setminus\bad{p}$. 
	\end{proof}

	We are thus justified in saying that $\bm{p}$ \emph{strongly forces} $\vphi(x_G)$ if $\vphi(x)$ holds for all $x\in [\bm{p}]$. 

	\begin{proposition} \label{prop:bushy_Laver:dominating}
		If $G\subset \bushy$ is a sufficiently generic, then $x_G$ dominates all functions in~$\ideal{I}$.
	\end{proposition}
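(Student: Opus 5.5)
We follow the proof of \cref{prop:laver_adds_dominating_functions}, now keeping track of the bad set. It suffices to show that for every $f\in\ideal{I}$, the set of conditions strongly forcing ``$x_G(i)\ge f(i)$ for all $i\ge|\stm{q}|$'' is dense in~$\bushy$. Combined with \cref{lem:avoidbad}, this gives that a sufficiently generic $x_G$ dominates~$f$.

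Fix $\bm{p}\in\bushy$ and $f\in\ideal{I}$. Let $\tree{q}$ be the set of $\sigma\in\tree{p}$ that are comparable with $\stm{p}$ and satisfy $\sigma(i)\ge f(i)$ for all $i$ with $|\stm{p}|\le i<|\sigma|$. We check that $\tree{q}$ is a Laver tree with stem $\stm{p}$. Every $\sigma\in\tree{q}$ extending the stem has infinitely many children in $\tree{p}$, and only finitely many of them, namely those $\concat{\sigma}{n}$ with $n<f(|\sigma|)$, are excluded. Hence $\sigma$ still has infinitely many children in $\tree{q}$. Moreover $\tree{q}\le_{\T}\tree{p}\oplus f$, so $\tree{q}\in\ideal{I}$ because $\ideal{I}$ is a Turing ideal.

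Set $\bad{q}=\bad{p}\cap\tree{q}$. We verify that $\bm{q}=(\tree{q},\bad{q})$ is a condition.
\begin{itemize}
\item It is $\omega$-closed by \cref{rmk:closure_stays_inside_tree}(3).
\item It is upward closed in $\tree{q}$, since $\bad{p}$ is upward closed in $\tree{p}\supseteq\tree{q}$.
\item It does not contain the stem, since $\stm{q}=\stm{p}\notin\bad{p}$.
\item It is $\Pi^1_1(\ideal{I})$: if $\bad{p}$ is $\Pi^1_1(X)$, then $\bad{q}$ is $\Pi^1_1(X\oplus\tree{q})$, and $X\oplus\tree{q}\in\ideal{I}$.
\end{itemize}
Clearly $\bm{q}$ extends $\bm{p}$. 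Every $x\in[\bm{q}]\subseteq[\tree{q}]$ satisfies $x(i)\ge f(i)$ for all $i\ge|\stm{p}|$, so $\bm{q}$ strongly forces that $x_G$ dominates~$f$.

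Genericity then finishes the argument. If $G$ meets the dense set above, there is some $\bm{q}\in G$ strongly forcing domination of~$f$. By \cref{lem:avoidbad}, $x_G\in[\bm{q}]$, so $x_G$ dominates~$f$. Since $\ideal{I}$ is countable, a sufficiently generic $G$ meets the dense set for every $f\in\ideal{I}$. The argument is routine, and no step poses a real obstacle. The only points needing care are that intersecting the bad set with the thinner tree preserves $\omega$-closure and the $\Pi^1_1(\ideal{I})$ complexity, and both are handled by the checks above.
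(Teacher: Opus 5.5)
Your proposal is correct and follows essentially the same route as the paper's proof. You thin $\tree{p}$ to the nodes whose values past the stem are at least $f$, set $\bad{q}=\bad{p}\cap\tree{q}$, and observe that this condition strongly forces domination; the checks you add (that $\bm{q}\in\bushy$, and that strong forcing gives forcing via \cref{lem:avoidbad}) are the details the paper leaves implicit.
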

	
	\begin{proof}
		Let $\bm{p}\in \bushy$ and let $h\in \ideal{I}$. Let~$\bm{q}$ be the following extension of~$\bm{p}$:
		\begin{itemize}
			\item $\tree{q}$ is the set of $ \sigma\in \tree{p}$ such that for all~$i$ with $|\stm{p}|\le i < |\sigma|$, $\sigma(i)\ge h(i)$; and
			\item $\bad{q} = \bad{p}\cap \tree{q}$. 
		\end{itemize}
		Then $\bm{q}$ strongly forces that $x_G$ dominates~$h$. 
	\end{proof}



	Suppose that $T$ is a Laver tree, and that $A\subseteq T$ is upwards closed in~$T$. Suppose that $\stem{T}\notin \cl(A)$. Then $\stem{T}$ is not in the upward closure of $\cl(A)$ in~$T$: if $\sigma\prec \stem{T}$ is in~$\cl(A)$ then $\sigma\in A$ (consider an $\omega$-bushy tree above~$\sigma$ with leaves in~$A$), and then $\stem{T}\in A$, so $\stem{T}\in \cl(A)$.

	\begin{lemma} \label{lem:Bushy_Laver:deciding_Pi_2}
		Let $\+H$ be an $\ideal{I}$-computable $G_\delta$ set. The collection of conditions that strongly decide $x_G\in \+H$ is dense in~$\bushy$. 
	\end{lemma}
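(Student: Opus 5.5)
Write $\+H = \bigcap_n \+U_n$ with $(\+U_n)$ having a name in~$\ideal{I}$, and let $B_n\subseteq\omega^{<\omega}$ be upward-closed names of the $\+U_n$, also in~$\ideal{I}$. Fix $\bm{p}\in\bushy$. We split into two cases according to whether some complement of a $\+U_n$ can be forced by enlarging the bad set.

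For each~$n$, let $A_n$ be the set of $\sigma\in\tree{p}$ extending $\stm{p}$ such that $\sigma\in\bad{p}$ or $[\sigma]^\prec\subseteq\+U_n$ fails in a simple way; concretely, take $A_n = \bad{p}\cup (\tree{p}\setminus B_n)$ is the wrong shape. The useful choice is $D_n = \bad{p}\cup (B_n\cap\tree{p})$, which is upward closed in $\tree{p}$ and $\Pi^1_1(\ideal{I})$.

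\textbf{Case 1.} For some $n$, $\stm{p}\notin\cl(D_n)$. Let $\bm{q}$ have $\tree{q}=\tree{p}$ and $\bad{q}$ the upward closure in $\tree{p}$ of $\cl(D_n)$. By \cref{prop:closure_of_Pi11_sets}(3), relativised to some $X\in\ideal{I}$, $\cl(D_n)$ is $\Pi^1_1(X)$, and so is its upward closure. It is $\omega$-closed by \cref{lem:characterisation_of_omega-closed} and \cref{rmk:closure_stays_inside_tree}(2). By the remark preceding the lemma, $\stm{p}\notin\bad{q}$. Thus $\bm{q}\le\bm{p}$ is a condition, and since $B_n\cap\tree{q}\subseteq\bad{q}$, every $x\in[\bm{q}]$ avoids $\+U_n$. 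Hence $\bm{q}$ strongly forces $x_G\notin\+H$.

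\textbf{Case 2.} For every $n$, $\stm{p}\in\cl(D_n)$, so $D_n$ is $\omega$-big above $\stm{p}$. Here we build a fusion sequence $(T_i)$ starting from $T_0=\tree{p}$, maintaining $T_i\setminus\bad{p}$ Laver with $\stm{p}\in\cl(D_n\cap T_i)$ for all $n$, and hence for all nodes $\tau\in T_i\setminus\bad{p}$ extending the stem. At stage~$i$, for each $\tau\in P_i(T_i)$ not in $\bad{p}$, $D_i$ is $\omega$-big above~$\tau$. (This should be arranged by restricting to nodes where all $D_n$ remain big, using \cref{lem:concatenation_and_preservation_of_smallness}(2) to show that the nodes where some $D_n$ is small can be thrown into the bad set as in Case 1 without making the stem bad.) By \cref{prop:closure_of_Pi11_sets}(2) there is a $\Delta^1_1(X)$ $\omega$-bushy well-founded tree $R_\tau$ above $\tau$ avoiding the protected nodes, with leaves in $D_i$. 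Take $S^i_\tau$ to be $R_\tau$ with all $T_i$-extensions of its leaves added, and set $T_{i+1}=\bigcup_\tau S^i_\tau$. Then $T_{i+1}\le_i T_i$, and every path of $T_{i+1}$ not through $\bad{p}$ lies in $\+U_i$. Since the whole construction is a $\Delta^1_1(X)$ recursion of length $\omega$ with uniformly $\Delta^1_1(X)$ steps (choose the witnessing trees uniformly from the $\ock^X$-enumeration of the $\Pi^1_1(X)$ sets), the sequence is hyperarithmetic in~$X$ and so lies in~$\ideal{I}$. The fusion $S$ is Laver by \cref{lem:fusion}, and $\bm{q}=(S,\bad{p}\cap S)$ strongly forces $x_G\in\+H$.

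The main obstacle is the uniformity in Case 2. Unlike the $\atr$ setting, we cannot appeal to dependent choice, so we need each bushy witness to be found effectively in a single $\ock^X$-computation. We also need to handle the ambiguity that leaves of $R_\tau$ may lie in $\bad{p}$ rather than $B_i$; such paths are excluded from $[\bm{q}]$ anyway. Finally, we must ensure bigness is preserved below every protected node, and I expect this to need the careful bad-set enlargement sketched above.
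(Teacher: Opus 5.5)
Your Case~1 is correct. The gap is the case distinction itself, exactly where you suspected trouble. The hypothesis of your Case~2 only says that each $D_n$ is $\omega$-big above $\stm{p}$. The fusion step at stage~$i$, however, needs $D_i\cap (T_i)^i_\tau$ to be $\omega$-big above \emph{every} $\tau\in P_i(T_i)$. Bigness above the stem says nothing about bigness above a particular extension, so your ``and hence for all nodes'' does not follow.

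In fact no invariant can rescue Case~2 as stated. Take $\bm{p}=(\omega^{<\omega},\emptyset)$ and $B_n=\{\sigma : |\sigma|\ge 1,\ \sigma(0)>n\}$. Every $D_n=B_n$ is $\omega$-big above $\rooot$, so you are in Case~2. Yet $\mathcal{H}=\bigcap_n\open{B_n}=\emptyset$, so no condition forces $x_G\in\mathcal{H}$. Concretely, the child $\seq{a}$ of the root protected from stage~$1$ on has $a\ge 1$ and $D_a$ $\omega$-small above it, so the construction is stuck by stage~$a$.

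Your parenthetical repair fails on the same example. Every child $\seq{k}$ of the root has $D_k$ $\omega$-small above it, so throwing ``the nodes where some $D_n$ is small'' into the bad set makes the stem bad by $\omega$-closure. \Cref{lem:concatenation_and_preservation_of_smallness}(2) controls only finite unions, not the union over all~$n$. Moreover, that set of nodes is the complement of the $\Pi^1_1$ set $\bigcap_n\cl(D_n)$, hence $\Sigma^1_1$, and in general it is not upward closed. So it is not an admissible bad set anyway.

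The fix, which is what the paper does, is to let the dichotomy range over all extensions. Ask whether there are $\bm{q}\le\bm{p}$ and~$n$ such that $B_n\cap\tree{q}$ is $\omega$-small above $\stm{q}$.
\begin{itemize}
\item If so, run your Case~1 argument on~$\bm{q}$ rather than on~$\bm{p}$. In the example, take $\bm{q}=\bm{p}\upto\seq{0}$ and $n=0$.
\item If not, then for every $\tau\in P_i(T_i)\setminus\bad{p}$, the pair $((T_i)^i_\tau,\bad{p}\cap(T_i)^i_\tau)$ is itself a condition extending~$\bm{p}$ with stem~$\tau$. This uses that $T_i$ is $\Delta^1_1(X)$, hence in~$\ideal{I}$. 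So $B_i\cap(T_i)^i_\tau$ is $\omega$-big above~$\tau$, which is exactly what the fusion step needs, with no invariant to maintain.
\end{itemize}
With that change, the rest of your Case~2 goes through. Since $\bad{p}\subseteq D_i$, you can simply wait for $\tau$ to enter $\cl(D_i\cap(T_i)^i_\tau)$, which happens at an $X$-computable stage whether or not $\tau\in\bad{p}$. This neatly replaces the paper's simultaneous enumeration of $\bad{p}$ and the closure. Leaves of the witness that lie in $\bad{p}$ are harmless, as you say.
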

	
	\begin{proof}
		The extra difficulty in comparison with the proof of \cref{lem:atr:forcing_totality_or_divergence} is that we don't have \cref{prop:ATR:Laver:fund}, that is, in the very first step. This is why we need to have the bad sets. Let $(A_n)\in \ideal{I}$ be a name of~$\+H$. We may assume that each~$A_n$ is upward closed in $\omega^{<\omega}$. 

		Let $\bm{p}\in \bushy$. We ask: is there some $\bm{q}\le \bm{p}$ and some~$n$ such that $A_n\cap \tree{q}$ is $\omega$-small (i.e., not $\omega$-big) above $\stm{q}$? That is, such that $\stm{q}\notin \cl(A_n\cap \tree{q})$? If so, we let $C$ be the upward closure of $\cl(A_n\cap \tree{q})$ in~$\tree{q}$; by \cref{rmk:closure_stays_inside_tree}, $C$ is $\omega$-closed. As we just observed, $\stm{q}\notin C$. By relativising \cref{prop:closure_of_Pi11_sets}, $\cl(A_n\cap \tree{q})$ is $\Pi^1_1(\ideal{I})$, and so~$C$ is $\Pi^1_1(\ideal{I})$.  By \cref{prop:closure_operator}, $\bad{q}\cup C$ is $\omega$-closed. Hence,
		 \[
		 \bm{r} = (\tree{q}, \bad{q}\cup C)
		 \]
		 is a condition in~$\bushy$ that extends~$\bm{q}$ (and so extends~$\bm{p}$) and strongly forces $x_G\notin \+H$. 

		\smallskip

		Suppose that the answer is ``no'': for all $\bm{q}\le \bm{p}$, for all~$n$, $\stm{q}\in \cl(A_n\cap \tree{q})$. We define a fusion sequence $(T_i)$ starting with $T_{0} = \tree{p}$. To do this, we perform an effective construction in $L_{\omega_1^X}[X]$ (the smallest admissible set containing~$X$), where $X\in \ideal{I}$ computes $\tree{p}$ and $(A_n)$, and is such that $\bad{p}$ is $\Pi^1_1(X)$. 

		Suppose that $T_i$ has been defined. For each $\tau\in P_i(T_i)$, we simultaneously enumerate the sets $\bad{p}$ and $\cl(A_i\cap (T_i)^i_\tau)$. By \cref{prop:closure_of_Pi11_sets}, at some $X$-computable stage, we see that either $\tau\in \bad{p}$, or that $\tau\in \cl(A_i\cap (T_i)^i_\tau)$. Note that if $\tau\notin \bad{p}$ then $((T_i)^i_\tau, \bad{p}\cap (T_i)^i_\tau)$ is a condition extending $\bm{p}$, with stem~$\tau$. 
		\begin{itemize}
			\item If we first see that $\tau\in \bad{p}$, we let $S^i_\tau = (T_i)^i_\tau$. 
			\item Otherwise, by \cref{prop:closure_of_Pi11_sets}, we obtain a well-founded tree~$R$, $\omega$-bushy above~$\tau$, with leaves in $A_i\cap (T_i)^i_\tau$. We let $S^i_\tau$ be the result of adding to~$R$ all predecessors of~$\tau$, and all extensions in $(T_i)^i_\tau$ of leaves of~$R$. 
		\end{itemize}
		In either case, $S^i_\tau$ is $\Delta^1_1(X)$ and is a Laver tree with $S^i_\tau \le_0 (T_i)^i_\tau$. We let $T_{i+1} = \bigcup_{\tau\in P_i(T_i)} S^i_\tau$. Since $P_i(T_i)$ is finite, there is an $X$-computable stage by which we discover all the trees $S^i_\tau$, and so the construction can proceed to the next step. 

		Finally, we let $S = \bigcap_i T_i$. Then $S\in \laver(\ideal{I})$. Let $\bm{q} = (S,\bad{p}\cap S)$. For any~$i$, for any ~$\tau \in P_i(S)$, either $\tau\in \bad{p}$, or $[S^i_\tau]\subseteq \open{A_i}$; so $[T_{i+1}\setminus \bad{p}]\subseteq \open{A_i}$. It follows that $\bm{q}$ strongly forces $x_G\in \+H$. 
	\end{proof}
	
\begin{proposition} \label{prop:Laver_bushy:Laver_property}
	$\bushy = \bushy(\ideal{I})$ has the Laver property over~$\ideal{I}$. 
\end{proposition}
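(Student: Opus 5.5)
Given an $\ideal{I}$-computable $\Phi\colon\baire\to h^\omega$ with $h\in\ideal{I}$, and $\bm{p}\in\bushy$, I will find $\bm{q}\le\bm{p}$ and a trace $(F_i)\in\ideal{I}$ such that $\bm{q}$ strongly forces: if $\Phi(x_G)$ is total, then $(F_i)$ traces it. The argument combines the fusion from \cref{lem:atr:LaverProperty} with the bad-set trick from \cref{lem:Bushy_Laver:deciding_Pi_2}.

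First I apply \cref{lem:Bushy_Laver:deciding_Pi_2} to the $G_\delta$ set $\mathrm{dom}(\Phi)$. If the resulting extension strongly forces non-totality, there is nothing to prove. So assume $\bm{p}$ strongly forces totality, i.e.\ $[\bm{p}]\subseteq\mathrm{dom}(\Phi)$.

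Next I fix $X\in\ideal{I}$ computing $\tree{p}$, $h$ and $\Phi$, with $\bad{p}$ being $\Pi^1_1(X)$. I then build a fusion sequence $T_0=\tree{p}\ge_0 T_1\ge_1\cdots$ by an effective construction in $L_{\omega_1^X}[X]$, as in \cref{lem:Bushy_Laver:deciding_Pi_2}. At stage $i$, for each $\tau\in P_i(T_i)$ and each $m<h(i)$, let $A^{i,\tau}_m$ be the set of $\sigma\in(T_i)^i_\tau$ with $\Phi(\sigma,i)=m$; this set is $X$-computable and upward closed. I enumerate simultaneously $\bad{p}$ and the $\Pi^1_1(X)$ sets $\cl(A^{i,\tau}_m)$ for $m<h(i)$, waiting for $\tau$ to appear in one of them. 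By \cref{prop:closure_of_Pi11_sets}, such appearance happens at an $X$-computable stage, and when $\tau\in\cl(A^{i,\tau}_m)$ we get a $\Delta^1_1(X)$ well-founded $\omega$-bushy witness tree. If $\tau\in\bad{p}$, put $S^i_\tau=(T_i)^i_\tau$ and $m^i_\tau=0$. If $\tau\in\cl(A^{i,\tau}_m)$, graft onto the witness tree all extensions in $(T_i)^i_\tau$ of its leaves, obtaining $S^i_\tau\le_0(T_i)^i_\tau$ on which every path has $\Phi(\cdot,i)=m$; set $m^i_\tau=m$. Let $T_{i+1}=\bigcup_\tau S^i_\tau$ and $F_i=\{m^i_\tau:\tau\in P_i(T_i)\}$, so $|F_i|\le 2^i$. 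Since each step finishes at an $X$-computable stage, the whole sequence, the fusion $S=\bigcap_iT_i$ and $(F_i)$ are $\Delta^1_1(X)$, hence lie in $\ideal{I}$ by $\lhyp$-closure.

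The main obstacle is that $\tau$ might lie in none of these sets. In that case I do not wait; I instead declare the relevant strings bad. Before starting the fusion, I first ask, as in \cref{lem:Bushy_Laver:deciding_Pi_2}, whether some $\bm{q}\le\bm{p}$ and some $i$ satisfy $\stm{q}\notin\cl\bigl(\bigcup_{m<h(i)}A_m\cap\tree{q}\bigr)$, where $A_m$ is the set of strings $\sigma$ with $\Phi(\sigma,i)=m$. If so, add the upward closure $C$ of that closure inside $\tree{q}$ to $\bad{q}$. Since $\stm{q}\notin C$, the result is a condition, and every path of it avoids $\open{\bigcup_m A_m}$. This makes $\Phi(x_G,i)$ undefined, contradicting the strongly forced totality, because the new condition extends $\bm{p}$ and its paths are paths of $[\bm{p}]$. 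So this case cannot occur. Hence, for every condition below $\bm{p}$, in particular $((T_i)^i_\tau,\bad{p}\cap(T_i)^i_\tau)$ when $\tau\notin\bad{p}$, the stem lies in $\cl(\bigcup_{m<h(i)}A_m)$. By \cref{prop:closure_operator}(3) this closure equals the finite union $\bigcup_m\cl(A_m)$, so some single $m$ works and the search above always terminates. Finally $\bm{q}=(S,\bad{p}\cap S)$ strongly forces $\Phi(x_G,i)\in F_i$ for all $i$: any $x\in[\bm{q}]$ passes through some $\tau\in P_i(S)\setminus\bad{p}$ and then lies in $[S^i_\tau]$. This gives the Laver property.
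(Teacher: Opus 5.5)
Your proposal is correct and follows the paper's proof. You reduce to $\bm{p}$ strongly forcing totality via \cref{lem:Bushy_Laver:deciding_Pi_2}, then run the fusion of \cref{lem:atr:LaverProperty} inside $L_{\omega_1^X}[X]$, at each $\tau\in P_i(T_i)$ waiting until either $\tau\in\bad{p}$ or some $\{\sigma : \Phi(\sigma,i)=m\}\cap(T_i)^i_\tau$ is $\omega$-big above $\tau$. Your argument that this search always terminates is the justification the paper leaves implicit: otherwise the closure could be added to the bad set, forcing $\Phi(x_G,i)$ undefined, and \cref{prop:closure_operator}(3) then isolates a single $m$. The only nit is that $A^{i,\tau}_m$ is $\Delta^1_1(X)$ rather than $X$-computable, since $T_i$ is only $\Delta^1_1(X)$; this changes nothing in the argument.
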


\begin{proof}
	The proof follows the proof of \cref{lem:atr:LaverProperty}, with the same modification as in the proof of \cref{lem:Bushy_Laver:deciding_Pi_2}. We may assume that $\bm{p}$ strongly forces that $\Phi(x_G)$ is total, where $\Phi\colon \baire\to h^\omega$ is partial continuous with name in~$\ideal{I}$ (and $h\in \ideal{I}$). During the construction, for each~$i$, for each $\tau\in P_i(T_i)$, we search until we either see that $\tau\in \bad{p}$, or that for some $m<h(i)$, the set of strings~$\sigma$ in $(T_i)^i_\tau$ with $\Phi(\sigma,i)=m$ is $\omega$-big above~$\tau$, and define $S^i_\tau$ accordingly. 
\end{proof}

As before, we obtain:

\begin{corollary} \label{cor:separating_dom_and_SNE_in_general}
	For any countable Turing ideal~$\ideal{I}$ that is downward closed under $\lhyp$, $\SME{\ideal{I}}\subsetneq \domm{\ideal{I}}$. 
\end{corollary}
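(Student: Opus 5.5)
The inclusion $\SME{\ideal{I}}\subseteq\domm{\ideal{I}}$ holds for every Turing ideal, via the effective morphism from $\spill{\ideal{M}}$ to $\wpesc$ recalled in the introduction. So the plan is to exhibit a single real in $\domm{\ideal{I}}\setminus\SME{\ideal{I}}$. Take $G\subset\bushy(\ideal{I})$ sufficiently generic and use the real $x_G$. Since $\ideal{I}$ is countable, only countably many dense sets need to be met (those from \cref{prop:bushy_Laver:dominating}, \cref{lem:Bushy_Laver:deciding_Pi_2} and the proof of \cref{prop:Laver_bushy:Laver_property}), so such a filter exists.

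By \cref{prop:bushy_Laver:dominating}, $x_G$ dominates every function in~$\ideal{I}$. As $x_G\le_{\T(\ideal{I})}x_G$, this places $x_G$ in $\domm{\ideal{I}}$.

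By \cref{prop:Laver_bushy:Laver_property}, $\bushy$ has the Laver property over~$\ideal{I}$. So \cref{cor:Laver_property_and_SME} gives $x_G\notin\SME{\ideal{I}}$. Spelled out, the chain is as follows. Any element of $\SME{\ideal{I}}$ $\ideal{I}$-computes an $\ideal{I}$-Cohen real, by the morphism from $\spill{\ideal{M}}$ to $\capture{\ideal{M}}$. But \cref{lem:Laver_property_and_generics} shows that no element of $\ideal{I}[x_G]$ is $\ideal{I}$-Cohen.

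The only point needing care is that ``sufficiently generic'' must cover both the dense sets for domination and those behind the Laver property. In particular, \cref{prop:Laver_bushy:Laver_property} requires first extending to a condition that strongly decides totality of $\Phi(x_G)$, which is available by \cref{lem:Bushy_Laver:deciding_Pi_2}. All of these are countably many dense sets indexed by elements of~$\ideal{I}$, so there is no real obstacle; the substantive work was already done in \cref{lem:Bushy_Laver:deciding_Pi_2}.
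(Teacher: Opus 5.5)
Your proposal is correct and is exactly the paper's argument. The paper obtains the corollary ``as before'' by combining \cref{prop:bushy_Laver:dominating} (giving $x_G\in\domm{\ideal{I}}$) with \cref{prop:Laver_bushy:Laver_property} and \cref{cor:Laver_property_and_SME} (giving $x_G\notin\SME{\ideal{I}}$), while the effective morphism supplies the inclusion $\SME{\ideal{I}}\subseteq\domm{\ideal{I}}$; your remark about meeting only countably many dense sets indexed by elements of $\ideal{I}$ is the right justification for ``sufficiently generic''.
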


\section{Interlude: Hechler forcing.} \label{sec:Hechler}

In set theory, one builds a real in $\SME{V} \setminus \SNE{V}$ by forcing with two steps: first adding a Cohen real, then a \emph{Hechler} real. In \cref{sec:Hechler_and_Cohen} we will describe an effectivization of the 2-step iteration $\cohen \star \hechler$, and we will use it to show that, if $\ideal{I}$ is closed under relative hyperarithmeticity, then there is some real in $\SME{\ideal{I}} \setminus \SNE{\ideal{I}}$. Here we show an intermediate result which is helpful to understand the notion of forcing that we will introduce in \cref{sec:Hechler_and_Cohen}.

\begin{definition} \label{def:Hechler_forcing}
  	A \emph{Hechler condition} is a pair $(\tau,f)$ where $\tau\in \omega^{<\omega}$ and $f\in \baire$. A Hechler condition $(\tau,f)$ extends another condition $(\sigma, g)$ if $\tau\succeq \sigma$, $f\ge g$ (for all~$n$, $f(n)\ge g(n)$), and for every~$i$ with $|\sigma|\le i < |\tau|$ we have $\tau(i)\ge g(i)$. 
\end{definition}

For a Hechler condition $(\tau,f)$ we let $T_{\tau,f}$ be the collection of all $\sigma\in \omega^{<\omega}$ comparable with~$\tau$ such that for all~$i$ with $|\tau|\le i<|\sigma|$ we have $\sigma(i)\ge f(i)$. The closed set determined by a condition $(\tau,f)$ is $[T_{\tau,f}]$. 

\begin{remark}
	A variant of Hechler forcing deals with trees directly; a \emph{Hechler tree} is defined like a Laver tree, except that for every $\sigma$ extending the stem, for almost all~$n$, $\concat{\sigma}n\in T$. Each $T_{\tau,f}$ above is a Hechler tree, but not all Hechler trees are generated thusly, not even densely. Indeed, the two variants of Hechler forcing are in general not forcing equivalent. For a comparison of the two, see \cite{palumbo}. For the purpose of our version of the iteration $\cohen\star\hechler$, it is much easier to work with Hechler conditions rather than trees. 
\end{remark}

\begin{definition} \label{def:Hechler_with_bad_sets}
	Let $\ideal{I}$ be a Turing ideal. \emph{Hechler forcing with bad sets over $\ideal{I}$}, denoted by $\hechlerb{} = \hechlerb(\ideal{I})$, is the collection of triples $\bm{p} = (\stm{p},\fun{p}, \bad{p})$, where:
	\begin{itemize}
 		\item $(\stm{p}, \fun{p})$ is a Hechler condition, with $\fun{p}\in \ideal{I}$; 
 		\item $\bad{p} \subseteq \omega^{<\omega}$ is upwards closed in $\omega^{<\omega}$, $\bad{p}\in \ideal{I}$, and $\stm{p} \notin \cl(\bad{p})$.
 	\end{itemize} 	
 	A condition $\bm{q}$ extends a condition $\bm{p}$ if the Hechler condition $(\stm{q},\fun{q})$ extends $(\stm{p},\fun{p})$, and $\bad{p}\subseteq \bad{q}$. 
\end{definition}

\begin{remark}
	Note the difference with Laver forcing with bushy trees: here we are not assuming that~$\bad{p}$ is $\omega$-closed (for this reason, we can take $\bad{p}\in \ideal{I}$ rather than just being $\Pi^1_1(\ideal{I})$). The point is that because Hechler conditions are much less malleable than Laver trees (we cannot perform fusion), we will need to consider whether certain sets are $\omega$-big above various strings extending the stem, not only above the stem itself. This implies that we cannot just pass to the upwards closure of an $\omega$-closed set as we did above. 
\end{remark}

For basic facts about Hechler forcing with bad sets, we need to identify which finite extensions of the stem are ``permitted'' by a condition.

\begin{definition} \label{def:Hechler_with_bad_sets:E_p}
 For $\bm{p}\in \hechlerb$ we let 
\[
	E_{\bm{p}} = \left\{ \sigma\succeq \stm{p}  \,:\,  \sigma\in T_{\stm{p},\fun{p}}\setminus \cl(\bad{p}) \right\}. 
\]	
\end{definition}

\begin{lemma} \label{lem:Hechler_with_bad_sets:possible_finite_extensions}
	Let $\bm{p}\in \hechlerb$. The following are equivalent for $\sigma\in \omega^{<\omega}$: 
	\begin{enumerate}
		\item $\bm{p}\upto \sigma = (\sigma,\fun{p},\bad{p})\in \hechlerb$ and extends~$\bm{p}$; 
		\item There is some $\bm{q}\le \bm{p}$ in $\hechlerb$ such that $\sigma = \stm{q}$; 
		\item $\sigma\in E_{\bm{p}}$. 
	\end{enumerate}
\end{lemma}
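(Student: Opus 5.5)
I will prove the cycle of implications (1)$\Rightarrow$(2)$\Rightarrow$(3)$\Rightarrow$(1). Each step just unwinds \cref{def:Hechler_forcing}, \cref{def:Hechler_with_bad_sets} and \cref{def:Hechler_with_bad_sets:E_p}. Two facts about $\cl$ will do the work: the closure properties of \cref{prop:closure_operator}, and the observation that the conditions on $f$ play no role in $\omega$-bigness of $\bad{p}$.

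(1)$\Rightarrow$(2) is immediate: take $\bm{q}=\bm{p}\upto\sigma$.

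For (2)$\Rightarrow$(3), let $\bm{q}\le\bm{p}$ with $\stm{q}=\sigma$. Since $(\sigma,\fun{q})$ extends $(\stm{p},\fun{p})$ as a Hechler condition, we have $\sigma\succeq\stm{p}$. Moreover, for all $i$ with $|\stm{p}|\le i<|\sigma|$ we have $\sigma(i)\ge\fun{p}(i)$, so $\sigma\in T_{\stm{p},\fun{p}}$. Since $\bm{q}$ is a condition, $\sigma=\stm{q}\notin\cl(\bad{q})$. As $\bad{p}\subseteq\bad{q}$, monotonicity of $\cl$ (which follows from \cref{prop:closure_operator}(3), or directly from the definition of $\omega$-bigness) gives $\cl(\bad{p})\subseteq\cl(\bad{q})$. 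Hence $\sigma\notin\cl(\bad{p})$, and so $\sigma\in E_{\bm{p}}$.

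For (3)$\Rightarrow$(1), let $\sigma\in E_{\bm{p}}$. I check that $(\sigma,\fun{p},\bad{p})$ is a condition:
\begin{itemize}
	\item $\fun{p}\in\ideal{I}$ and $\bad{p}\in\ideal{I}$;
	\item $\bad{p}$ is upward closed;
	\item $\sigma\notin\cl(\bad{p})$ by the definition of $E_{\bm{p}}$.
\end{itemize}
It extends $\bm{p}$ because:
\begin{itemize}
	\item $\sigma\succeq\stm{p}$;
	\item $\fun{p}\ge\fun{p}$;
	\item $\sigma(i)\ge\fun{p}(i)$ for $|\stm{p}|\le i<|\sigma|$, which is exactly membership in $T_{\stm{p},\fun{p}}$ for a string extending $\stm{p}$;
	\item $\bad{p}\subseteq\bad{p}$.
\end{itemize}

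I don't expect a real obstacle. The only points needing care are that membership in $T_{\stm{p},\fun{p}}$ for $\sigma\succeq\stm{p}$ is exactly the Hechler extension clause, and that $\cl$ is monotone.
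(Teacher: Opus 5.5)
Your proof is correct and follows the same route as the paper. The paper also treats (1)$\Rightarrow$(2) as trivial, and proves (2)$\Rightarrow$(3) by observing $\stm{q}\in T_{\stm{p},\fun{p}}$ and using $\cl(\bad{p})\subseteq\cl(\bad{q})$; it dismisses (3)$\Rightarrow$(1) as immediate from the definition of $\hechlerb$, where you spell out the individual definitional checks.
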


\begin{proof}
	(2)$\Rightarrow$(3): suppose that $\bm{q}\le \bm{p}$. Since the Hechler condition $(\stm{q},\fun{q})$ extends the Hechler condition $(\stm{p},\fun{p})$, we have $\stm{q}\in T_{\stm{p},\fun{p}}$ (and $\stm{q}\succeq \stm{p}$). We have $\stm{q}\notin \cl(\bad{q})$ and $\bad{p}\subseteq \bad{q}$, so $\cl(\bad{p})\subseteq \cl(\bad{q})$ and $\stm{q}\notin \cl(\bad{p})$. 

	(3)$\Rightarrow$(1) is by definition of~$\hechlerb$. 
\end{proof}

We also observe:

\begin{lemma} \label{lem:Hechler_with_bad:existence_of_long_extensions}
	For all $m<\omega$, the collection of $\bm{p}\in \hechlerb$ with $|\stm{p}|\ge m$ is dense in~$\hechlerb$. 
\end{lemma}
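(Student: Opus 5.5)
It suffices to show that every $\bm{p}\in\hechlerb$ can be extended by a condition whose stem has length at least~$m$, and an induction on $m-|\stm{p}|$ reduces this to extending the stem by one. So fix $\bm{p}$. By \cref{lem:Hechler_with_bad_sets:possible_finite_extensions}, it is enough to find a one-step extension $\sigma=\concat{\stm{p}}{n}$ lying in $E_{\bm{p}}$. Then $\bm{p}\upto\sigma=(\sigma,\fun{p},\bad{p})$ is a condition extending~$\bm{p}$ whose stem is one longer.

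Membership of $\sigma=\concat{\stm{p}}{n}$ in $T_{\stm{p},\fun{p}}$ only requires $n\ge \fun{p}(|\stm{p}|)$, and this holds for all but finitely many~$n$. It remains to see that $\sigma\notin\cl(\bad{p})$ for infinitely many~$n$. Suppose not. Then for almost all~$n$ we have $\concat{\stm{p}}{n}\in\cl(\bad{p})$, and in particular for infinitely many~$n$. By \cref{prop:closure_operator}, $\cl(\bad{p})$ is a fixed point of~$\cl$, so by \cref{lem:characterisation_of_omega-closed} it is $\omega$-closed. Hence $\stm{p}\in\cl(\bad{p})$, contradicting the definition of a condition in~$\hechlerb$. (One can also argue directly from ranks: if infinitely many children have finite $\rk_{\bad{p}}$, then so does the parent, since the ranks of those children are bounded by some ordinal.)

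So infinitely many~$n$ are outside $\cl(\bad{p})$, while only finitely many are excluded by~$\fun{p}$. We pick any $n$ that satisfies both conditions, and then $\sigma=\concat{\stm{p}}{n}\in E_{\bm{p}}$. Iterating this $m$ times gives the claim. No step is a real obstacle. The only point needing care is that $\cl(\bad{p})$ is $\omega$-closed, which is where the earlier closure-operator results are used.
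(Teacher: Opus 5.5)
Your proof is correct and is essentially the paper's argument. The paper finds a long $\sigma\in E_{\bm{p}}$ in one step, by noting that the strings in $T_{\stm{p},\fun{p}}$ of length $\ge m$ form a set that is $\omega$-big above $\stm{p}$, while $\cl(\bad{p})$ is not. You apply the same fact one level at a time, via the $\omega$-closedness of $\cl(\bad{p})$, i.e.\ $\cl(\cl(B))=\cl(B)$, and iterate using \cref{lem:Hechler_with_bad_sets:possible_finite_extensions}, so the difference is only cosmetic.
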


\begin{proof}
	Let $\bm{p}\in \hechlerb$, and let $m > |\stm{p}|$. The collection of strings $\sigma\in T_{\stm{p},\fun{p}}$ of length $\ge m$ is $\omega$-big above~$\stm{p}$, whereas $\cl(\bad{p})$ is small above~$\stm{p}$, so there is some $\sigma\in E_{\bm{p}}$ of length $\ge m$. 
\end{proof}

For a filter $G\subset \hechlerb$ we let
\[
	x_G = \bigcup \left\{ \stm{p} \,:\,  \bm{p}\in G \right\};
\]
by \cref{lem:Hechler_with_bad:existence_of_long_extensions}, if~$G$ is sufficiently generic then $x_G\in \baire$. 

For $\bm{p}\in \hechlerb$ we let 
\[
	[\bm{p}] = [T_{\stm{p},\fun{p}}\setminus \bad{p}] = [T_{\stm{p},\fun{p}}]\setminus \open{\bad{p}}. 
\]

The argument of \cref{lem:avoidbad} gives:

\begin{lemma} \label{lem:Hechler_with_bad:strong_forcing_implies_forcing}
	Every $\bm{p}\in \hechlerb$ forces that $x_G\in [\bm{p}]$.
\end{lemma}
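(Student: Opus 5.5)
We argue as in \cref{lem:avoidbad}, adapted to Hechler conditions with bad sets. Fix $\bm{p}\in\hechlerb$ and a sufficiently generic $G\subset\hechlerb$ with $\bm{p}\in G$. We must check two things: $x_G\in[T_{\stm{p},\fun{p}}]$, and no initial segment of $x_G$ lies in $\bad{p}$.

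Let $\sigma\prec x_G$. By the definition of $x_G$ and \cref{lem:Hechler_with_bad:existence_of_long_extensions}, there is some $\bm{q}\in G$ with $\sigma\preceq\stm{q}$. Since $G$ is a filter, we may take $\bm{q}$ to extend $\bm{p}$; if needed, replace $\bm{q}$ by a common extension in $G$ of $\bm{q}$ and $\bm{p}$, whose stem still extends $\sigma$.

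By \cref{lem:Hechler_with_bad_sets:possible_finite_extensions}, (2)$\Rightarrow$(3), we get $\stm{q}\in E_{\bm{p}}$. Thus $\stm{q}\in T_{\stm{p},\fun{p}}$ and $\stm{q}\notin\cl(\bad{p})$. Because $T_{\stm{p},\fun{p}}$ is closed under initial segments, $\sigma\in T_{\stm{p},\fun{p}}$.

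For the bad set, note that $\bad{p}\subseteq\cl(\bad{p})$ by \cref{prop:closure_operator}. Since $\bad{p}$ is upwards closed in $\omega^{<\omega}$, the condition $\sigma\in\bad{p}$ would give $\stm{q}\in\bad{p}\subseteq\cl(\bad{p})$, a contradiction. So $\sigma\in T_{\stm{p},\fun{p}}\setminus\bad{p}$ for every $\sigma\prec x_G$, and hence $x_G\in[\bm{p}]$.

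There is no real obstacle here. The only point needing care is the filter step, which ensures the chosen $\bm{q}\in G$ lies below $\bm{p}$ while its stem is still long enough to extend $\sigma$.
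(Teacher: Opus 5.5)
Your proof is correct and is essentially the paper's argument; the paper simply invokes the proof of \cref{lem:avoidbad}: pick $\bm{q}\in G$ below $\bm{p}$ with $\sigma\preceq\stm{q}$, then use $\bm{q}\le\bm{p}$ to place $\sigma$ in $T_{\stm{p},\fun{p}}\setminus\bad{p}$. Routing this through $\stm{q}\in E_{\bm{p}}$ via \cref{lem:Hechler_with_bad_sets:possible_finite_extensions}, together with the upward closure of $\bad{p}$, is a fine way to make the ``since $\bm{q}\le\bm{p}$'' step explicit.
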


Again, we say that $\bm{p}$ strongly forces $\varphi$ if $\varphi(x)$ holds for every $x \in [\bm{p}]$.

\begin{proposition} \label{prop:Hechler_with_bad_sets:dominating}
 For any countable Turing ideal~$\ideal{I}$, if $G\subset \hechlerb(\ideal{I})$ is sufficiently generic then~$x_G$ dominates all functions in~$\ideal{I}$. 
\end{proposition}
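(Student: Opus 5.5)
The plan is a density argument, parallel to the proofs of \cref{prop:laver_adds_dominating_functions} and \cref{prop:bushy_Laver:dominating}. Fix $h\in\ideal{I}$ and a condition $\bm{p}\in\hechlerb(\ideal{I})$. We will find an extension $\bm{q}\le\bm{p}$ that strongly forces that $x_G$ dominates~$h$. By \cref{lem:Hechler_with_bad:strong_forcing_implies_forcing}, $\bm{q}$ then forces this, and genericity over the countably many $h\in\ideal{I}$ finishes the proof.

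The natural candidate is $\bm{q}=(\stm{p},\max(\fun{p},h),\bad{p})$, where $\max$ is taken pointwise. We check the three requirements.
\begin{itemize}
\item The function $\max(\fun{p},h)$ lies in~$\ideal{I}$, since $\ideal{I}$ is a Turing ideal.
\item The bad set is unchanged, and it is still upwards closed and in~$\ideal{I}$. The stem is also unchanged, so $\stm{q}\notin\cl(\bad{q})$ holds exactly as for~$\bm{p}$.
\item $\bm{q}$ extends $\bm{p}$: the stems are equal, so the condition on $\tau(i)$ for $|\sigma|\le i<|\tau|$ is vacuous; moreover $\max(\fun{p},h)\ge\fun{p}$, and $\bad{p}\subseteq\bad{q}$.
\end{itemize}

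For strong forcing, let $x\in[\bm{q}]\subseteq[T_{\stm{q},\fun{q}}]$. By the definition of $T_{\tau,f}$, for every $i\ge|\stm{p}|$ we have $x(i)\ge\fun{q}(i)\ge h(i)$. Hence $x$ dominates~$h$.

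No step here is a real obstacle. The only point needing care is that the bad set and the non-bigness requirement $\stm{q}\notin\cl(\bad{q})$ are untouched when only the function coordinate is raised. This is clear because both depend only on the stem and on $\bad{p}$, which we leave as they are.
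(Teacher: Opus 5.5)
Your proposal is correct and is essentially the paper's own proof: it also takes the pointwise maximum $g=\max\{h,\fun{p}\}$, keeps the stem and bad set, and observes that $(\stm{p},g,\bad{p})$ extends $\bm{p}$ and strongly forces that $x_G$ dominates~$h$. Your explicit checks, that the condition $\stm{q}\notin\cl(\bad{q})$ is untouched and that strong forcing implies forcing via \cref{lem:Hechler_with_bad:strong_forcing_implies_forcing}, just spell out what the paper leaves implicit.
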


\begin{proof}
	Let $\bm{p}\in \hechlerb$, and let $h\in \ideal{I}$. Let $g(m) = \max \{h(m),\fun{p}(m)\}$. Then $(\stm{p}, g,\bad{p})$ is in~$\hechlerb$, extends~$\bm{p}$, and strongly forces that $x_G$ dominates~$h$. 
\end{proof}

\subsection{Strong null engulfment and computing random reals}

Recall that above we showed that a real that we constructed was not strongly meagre engulfing because it did not compute any $\ideal{I}$-Cohen real. The exact same argument holds for the null ideal. For a Turing ideal~$\ideal{I}$, a real $x$ is \emph{$\ideal{I}$-random} if it is not an element of any $\ideal{I}$-computable null (a null set with a name in~$\ideal{I}$). As with Cohen reals, for a jump ideal, all arithmetical notions of randomness coincide; for a general ideal, this notion of randomness is sometimes called $\ideal{I}$-\emph{Schnorr} randomness. 

Just as with the meagre ideal, there is an effective morphism from the Weihrauch problem $\spill{\ideal{N}}$ to the problem $\capture{\ideal{N}}$ (see \cite[Proposition 3.9]{GKT} for details), which shows that for any ideal~$\ideal{I}$, if $x\in \SNE{\ideal{I}}$ then there is some $\ideal{I}$-random real that is $\ideal{I}$-computable from~$x$.  In the next section, when we construct a real in $\SME{\ideal{I}}\setminus \SNE{\ideal{I}}$, we will do so by constructing a real that does not $\ideal{I}$-compute any $\ideal{I}$-random real. 

In this section we show the weaker result, that Hechler forcing with bad sets adds a dominating real that does not $\ideal{I}$-compute a random. To get the real into the class $\SME{\ideal{I}}$ we will need to work harder (as mentioned, force with a 2-step iteration of Cohen and Hechler), so the argument in this section serves as a warm-up to the next section.

\medskip

\subsubsection*{$\sigma$-centered notions of forcing} 
The reason that (in set theory) a Hechler real does not add randoms is that Hechler forcing is \emph{$\sigma$-centered}. That is, Hechler forcing can be partitioned into countably many sets, such that any two conditions in the same part of the partition are compatible. Namely, any two Hechler conditions $(\tau,f)$ and $(\tau,f')$ with the same ``stem'' are compatible, since the condition $(\tau,\max\{f,f'\})$ extends them both. 

The ``bad set'' variant of Hechler forcing is $\sigma$-centered as well:
	\begin{lemma}
	If $\bm{p},\bm{q}\in \hechlerb$ and $\stm{p}= \stm{q}$ then $\bm{p}$ and $\bm{q}$ are compatible in $\hechlerb$. 
\end{lemma}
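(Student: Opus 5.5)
The natural candidate for a common extension is $\bm{r} = (\stm{p}, g, \bad{p}\cup\bad{q})$, where $g = \max\{\fun{p},\fun{q}\}$ pointwise and $\tau = \stm{p}=\stm{q}$. I will check that $\bm{r}\in \hechlerb$ and that $\bm{r}$ extends both $\bm{p}$ and $\bm{q}$.

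Membership in $\hechlerb$ splits into three routine checks and one real one.
\begin{itemize}
\item Since $\ideal{I}$ is a Turing ideal, $g\in\ideal{I}$, and $\bad{p}\cup\bad{q}\in\ideal{I}$.
\item A union of two sets each upward closed in $\omega^{<\omega}$ is again upward closed.
\item The remaining requirement is $\tau\notin\cl(\bad{p}\cup\bad{q})$. By \cref{prop:closure_operator}(3), $\cl(\bad{p}\cup\bad{q}) = \cl(\bad{p})\cup\cl(\bad{q})$. By hypothesis $\tau\notin\cl(\bad{p})$ and $\tau\notin\cl(\bad{q})$, so $\tau$ is not in the union.
\end{itemize}
This last point is exactly the smallness-preservation property of \cref{lem:concatenation_and_preservation_of_smallness}(2).

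For the extension relation, I check against $\bm{p}$; the case of $\bm{q}$ is symmetric.
\begin{itemize}
\item The Hechler condition $(\tau,g)$ extends $(\tau,\fun{p})$: the stems agree, $g\ge\fun{p}$ pointwise, and the constraint on new entries $\tau(i)$ for $|\tau|\le i<|\tau|$ is vacuous.
\item $\bad{p}\subseteq\bad{p}\cup\bad{q}$.
\end{itemize}

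The only nontrivial ingredient is the union property of $\cl$, and that is already available from \cref{prop:closure_operator}. I expect no real obstacle.
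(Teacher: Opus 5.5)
Your proposal is correct and is essentially the paper's proof: the common extension $\bm{r}=(\stm{p},\max\{\fun{p},\fun{q}\},\bad{p}\cup\bad{q})$, with \cref{prop:closure_operator}(3) giving $\stm{p}\notin\cl(\bad{p}\cup\bad{q})$. Your explicit checks of membership in $\ideal{I}$, upward closure, and the vacuous stem condition fill in the details the paper leaves implicit.
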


\begin{proof}
	Define a condition $\bm{r}$ by letting $\stm{r} = \stm{p}$,  $\fun{r}=\max\{\fun{p}, \fun{q}\}$ (the pointwise maximum of the two functions), and $\bad{r} = \bad{p}\cup \bad{q}$. By \cref{prop:closure_operator}, $\bad{r}$ is a condition, and it extends both $\bm{p}$ and~$\bm{q}$.
\end{proof}

\subsection{Forcing totality and avoiding randoms}

For the proof of the following lemma, we use the fact that if $(\tau,f)$ is a Hechler condition, then $T_{\tau,f}$ is a Hechler tree, so for any $A\subseteq \omega^{<\omega}$ and any $\sigma\succeq \tau$ in $T_{\tau,f}$, $A$ is $\omega$-big above~$\sigma$ if and only if $A\cap T_{\tau,f}$ is $\omega$-big above~$\sigma$ (this is why in the definition of $\hechlerb$ we do not require $\bad{p}\subseteq T_{\stm{p},\fun{p}}$). 

\begin{lemma} \label{lem:Hechler:forcing_open}
	Let $A\in \ideal{I}$ be a name of an open set, and let $\bm{p}\in \hechlerb$. 
	\begin{enumerate}
		\item If $E_{\bm{p}}\subseteq \cl(A)$ then $\bm{p}\force x_G\in \open{A}$. 
		\item If $E_{\bm{p}}\nsubseteq \cl(A)$ then~$\bm{p}$ has an extension which strongly forces that $x_G\notin \open{A}$. 
	\end{enumerate}
\end{lemma}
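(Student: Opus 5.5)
For (1), the plan is a density argument. Suppose $E_{\bm{p}}\subseteq \cl(A)$, and let $\bm{q}\le\bm{p}$ be arbitrary. We will find $\bm{r}\le\bm{q}$ whose stem lies in the upward closure of~$A$; such an $\bm{r}$ strongly forces $x_G\in\open{A}$, and density then gives $\bm{p}\force x_G\in\open{A}$.

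By \cref{lem:Hechler_with_bad_sets:possible_finite_extensions}, $\stm{q}\in E_{\bm{q}}$. Since $\bm{q}\le\bm{p}$, the same lemma gives $\stm{q}\in E_{\bm{p}}\subseteq\cl(A)$. Hence $A$ is $\omega$-big above $\stm{q}$. By the remark preceding the lemma (that $T_{\stm{q},\fun{q}}$ is a Hechler tree), $A\cap T_{\stm{q},\fun{q}}$ is also $\omega$-big above $\stm{q}$.

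On the other hand, $\stm{q}\notin\cl(\bad{q})$. By \cref{prop:closure_operator}(3), the set $(A\cap T_{\stm{q},\fun{q}})\setminus\cl(\bad{q})$ is then $\omega$-big above $\stm{q}$: otherwise $A\cap T_{\stm{q},\fun{q}}\subseteq \bigl((A\cap T_{\stm{q},\fun{q}})\setminus\cl(\bad{q})\bigr)\cup\cl(\bad{q})$ would force one of the two pieces to be big above $\stm{q}$, and neither is.

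Since $\stm{q}$ itself is not in $\cl(\bad{q})$, any witnessing tree contains a leaf $\sigma\succeq\stm{q}$ with $\sigma\in A\cap T_{\stm{q},\fun{q}}$ and $\sigma\notin\cl(\bad{q})$. Thus $\sigma\in E_{\bm{q}}$, and $\bm{r}=\bm{q}\upto\sigma$ is a condition extending $\bm{q}$. Because $\sigma$ extends a string in~$A$, every element of $[\bm{r}]$ lies in $\open{A}$, so $\bm{r}$ strongly forces $x_G\in\open{A}$. (Taking $A$ upward closed is harmless.)

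For (2), suppose some $\sigma\in E_{\bm{p}}$ has $\sigma\notin\cl(A)$. The plan is to take $\bm{q}=(\sigma,\fun{p},\bad{p}\cup A')$, where $A'$ is the upward closure of~$A$ in $\omega^{<\omega}$. This set is in~$\ideal{I}$ and is upward closed. For it to be a condition we need $\sigma\notin\cl(\bad{p}\cup A')=\cl(\bad{p})\cup\cl(A')$. First, $\sigma\notin\cl(\bad{p})$ because $\sigma\in E_{\bm{p}}$. Second, $\cl(A')=\cl(A)$ for strings outside the upward closure, since a bushy tree above~$\sigma$ with leaves extending elements of~$A$ can be pruned at the first node in~$A$ along each branch. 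Also $\sigma$ is not in~$A'$: if some $\rho\preceq\sigma$ were in~$A$, then $\sigma\in A'$, and a trivial one-node tree would give $\sigma\in\cl(A')$; combined with the pruning argument, this would give $\sigma\in\cl(A)$, a contradiction.

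It follows that $\bm{q}$ is a condition. It extends $\bm{p}$ because $(\sigma,\fun{p})$ extends $(\stm{p},\fun{p})$, as $\sigma\in T_{\stm{p},\fun{p}}$, and its bad set contains $\bad{p}$. Since $[\bm{q}]$ avoids $\open{A'}=\open{A}$, $\bm{q}$ strongly forces $x_G\notin\open{A}$.

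The main subtlety is the bookkeeping between $A$ and its upward closure, namely checking that $\cl(A')=\cl(A)$. This holds because a well-founded $\omega$-bushy tree with leaves in~$A'$ can have each leaf replaced by the subtree cut at its predecessor in~$A$; that predecessor lies above the root, because the root $\sigma$ has no initial segment in~$A$.
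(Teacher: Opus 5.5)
Your proof of (1) is the paper's argument with the details filled in: use density below $\bm{p}$, and pass to $\bm{q}\upto\sigma$ for some $\sigma\in (A\cap T_{\stm{q},\fun{q}})\setminus\cl(\bad{q})$. Such a $\sigma$ exists by the Hechler-tree remark and \cref{prop:closure_operator}. In (2) your choice $(\sigma,\fun{p},\bad{p}\cup A)$ is also the paper's. The one step that fails is your extra bookkeeping for a name $A$ that is not upward closed.

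Your argument that $\sigma\notin A'$ is circular. It appeals to the pruning argument, but as your last paragraph says, pruning needs the root $\sigma$ to have no initial segment in $A$, which is exactly what you are trying to show. The claim is also false. Take $\bm{p}=(\langle\rangle,0,\emptyset)$, with $0$ the constant zero function, and $A=\{\langle\rangle\}$. Then $E_{\bm{p}}=\omega^{<\omega}$ and $\cl(A)=\{\langle\rangle\}$, so $\sigma=\langle 0\rangle\in E_{\bm{p}}\setminus\cl(A)$. But $A'=\omega^{<\omega}$, so $(\sigma,0,A')$ is not a condition. Worse, no extension of $\bm{p}$ can strongly force $x_G\notin\open{A}=\baire$, because $[\bm{r}]\neq\emptyset$ for every condition $\bm{r}$ (build a path through $T_{\stm{r},\fun{r}}$ that avoids the $\omega$-closed set $\cl(\bad{r})$). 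So (2) is simply false for names that are not upward closed, and no bookkeeping can rescue it. Your remark that taking $A$ upward closed is harmless is right for (1), where $\cl$ is monotone. It is wrong for (2), since $E_{\bm{p}}\nsubseteq\cl(A)$ does not imply $E_{\bm{p}}\nsubseteq\cl(A')$.

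The fix is to read the lemma as the paper tacitly does, with $A$ upward closed. This is needed anyway for $\bad{p}\cup A$ to be an admissible bad set in $\hechlerb$. The names to which the lemma is applied, such as the sets $A_n$ in \cref{prop:Hechler:no_randoms}, are upward closed. With that assumption $A'=A$, your final paragraph becomes unnecessary, and your proof coincides with the paper's.
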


\begin{proof}
	For~(1), let $\bm{q}\le \bm{p}$. Since $\stm{q}\in E_{\bm{p}}$, by assumption, $A$ is $\omega$-big above~$\stm{q}$. Since $\bad{q}$ is not $\omega$-big above~$\stm{q}$, we can find some $\sigma\in T_{\stm{q},\fun{q}}$ which is in $A\setminus \cl(\bad{q})$. So $(\sigma,\fun{q},\bad{q})$ is a condition extending~$\bm{q}$ which strongly forces that $x_G\in \open{A}$, and the collection of conditions that strongly force $x_G\in \open{A}$ is dense below~$\bm{p}$; it follows that $\bm{p}\force x_G\in \open{A}$. 

	For~(2), suppose that $\sigma\in E_{\bm{p}}\setminus \cl(A)$. Then $(\sigma,\fun{p},\bad{p}\cup A)$ extends~$\bm{p}$ and strongly forces that $x_G\notin \open{A}$. 
\end{proof}

If~$T$ is a Laver tree and $A\cap T$ is $\omega$-big above $\stem{T}$, then we have some $S\le_0 T$ that strongly forces that $x_G\in \open{A}$. In contrast, \cref{lem:Hechler:forcing_open} gives forcing into~$\open{A}$ rather than strong forcing, because we cannot throw infinitely many children out. In terms of bigness, the Laver forcing argument works because a set is either $\omega$-big or $\omega$-small. We would be able to obtain a similar result for Hechler forcing if the complement of an $\omega$-big set was always $\omega$-small, but this is readily seen to be false. This and related matters are explored in \cite{laverpartition}.

\begin{proposition} \label{prop:Hechler:no_randoms}
	Suppose that $\ideal{I}$ is closed under $\lhyp$. Then for sufficiently generic $G\subset \hechlerb(\ideal{I})$, $\ideal{I}[x_G]$ contains no $\ideal{I}$-random reals. 
\end{proposition}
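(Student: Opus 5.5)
The plan is to adapt the set-theoretic argument that $\sigma$-centered forcings add no random reals, using the fact (the lemma just before this subsection) that conditions of $\hechlerb$ with the same stem are compatible. Fix an $\ideal{I}$-computable $\Phi\colon\baire\to\cantor$ and a condition $\bm{p}$. I will find an extension $\bm{q}$ and a null $G_\delta$ set $\mathcal{N}$ with a name in~$\ideal{I}$ such that $\bm{q}$ forces: if $\Phi(x_G)$ is total then $\Phi(x_G)\in\mathcal{N}$. A density argument then finishes the proof.

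First I decide totality. The domain of $\Phi$ is $\ideal{I}$-computable $G_\delta$, an intersection of open sets $\open{D_m}$ with $D_m=\{\tau : \Phi(\tau,m)\converge\}$. If some extension of $\bm{p}$ has some $E_{\bm{q}}\nsubseteq \cl(D_m)$, then \cref{lem:Hechler:forcing_open}(2) gives an extension strongly forcing non-totality, and we are done. Otherwise, for every extension $\bm{q}$ and every~$m$ we have $E_{\bm{q}}\subseteq\cl(D_m)$. So I may assume that for every $\bm{q}\le\bm{p}$, every stem $\sigma\in E_{\bm{q}}$, and every~$m$, $D_m$ is $\omega$-big above~$\sigma$.

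Next I define the centers. For $\sigma\in E_{\bm{p}}$ and a binary string~$\rho$, let $A_\rho=\{\tau : \Phi(\tau)\perp\rho\}$; this is upward closed and in~$\ideal{I}$. Call $\rho$ \emph{forcible at~$\sigma$} if $\sigma\notin\cl(\bad{p}\cup A_\rho)$, equivalently $\sigma\notin\cl(A_\rho)$ (using \cref{prop:closure_operator} and $\sigma\notin\cl(\bad{p})$). In that case $(\sigma,\fun{p},\bad{p}\cup A_\rho)$ is a condition with stem~$\sigma$ that strongly forces $\rho\prec\Phi(x_G)$. Two forcible strings at the same~$\sigma$ must be comparable. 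Indeed, if $\rho\perp\rho'$ were both forcible, the two conditions would be compatible by $\sigma$-centeredness. Their common extension would force $\Phi(x_G)$ to extend two incompatible strings, which is impossible given that totality is forced.

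Hence the strings forcible at~$\sigma$ form a chain, and their union is a (possibly finite) $y_\sigma\in 2^{\le\omega}$. Forcibility at~$\sigma$ is $\Sigma^1_1(X)$ for suitable $X\in\ideal{I}$, since $\cl$ of a $\Delta^1_1(X)$ set is $\Pi^1_1(X)$ by \cref{prop:closure_of_Pi11_sets}. I intend to show, via admissibility of $\omega_1^X$ and the chain property, that the sequence $(y_\sigma)_\sigma$ is $\Delta^1_1(X)$. It then lies in~$\ideal{I}$, because $\ideal{I}$ is $\lhyp$-closed.

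With the centers in hand, I take the Solovay-type null set
\[
	\mathcal{N}=\bigcap_j\bigcup_{\sigma}\bigl[y_\sigma\upto(\langle\sigma\rangle+j)\bigr],
\]
where $\langle\sigma\rangle$ is the code of~$\sigma$ and only those $\sigma$ with $|y_\sigma|\ge\langle\sigma\rangle+j$ contribute. The $j$-th open set has measure at most $2^{-j+1}$, so $\mathcal{N}$ is null with a name in~$\ideal{I}$. It remains to show that for each~$j$, the conditions forcing $\Phi(x_G)\succ y_\sigma\upto(\langle\sigma\rangle+j)$ for some~$\sigma$ are dense below~$\bm{p}$.

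This density step is the main obstacle: at a fixed stem, $y_\sigma$ may be too short. I would first try to prove that $y_\sigma$ is in fact infinite for every $\sigma\in E_{\bm{p}}$, by a rank argument. Suppose every one-bit extension $\rho$ of $y_\sigma\upto\ell$ had $\sigma\in\cl(A_\rho)$. The two sets $A_{\rho^\frown0}$ and $A_{\rho^\frown1}$ each contain $A_\rho$. Combining this with bigness of $D_{\ell+1}$ above~$\sigma$, \cref{lem:concatenation_and_preservation_of_smallness}, and the structure of Hechler trees, I expect to derive $\sigma\in\cl(A_{y_\sigma\upto\ell})$, contradicting forcibility of $y_\sigma\upto\ell$.

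If that fails, I would fall back on moving the stem. Given $\bm{q}$, choose $\tau\in E_{\bm{q}}$ inside the big set $\{\tau : |\Phi(\tau)|\text{ is large}\}$; then $\Phi(\tau)$ is forcible at~$\tau$. I would adjust the weights $\langle\sigma\rangle$ to depend on how long $\Phi$ is forced to be at~$\sigma$, so that the measure bound survives.
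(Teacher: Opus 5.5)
There is a genuine gap. The chain of strings forcible at a stem can be finite, even empty, and then $\mathcal{N}$ captures nothing. Take $\Phi(x)(k)=x(|\stm{p}|+k)\bmod 2$. Let $\sigma\in E_{\bm{p}}$ with $|\sigma|=|\stm{p}|+k$, and let $\rho=\concat{\Phi(\sigma)}{i}$. Infinitely many children $\concat{\sigma}{m}$ have $m\not\equiv i$, so $A_\rho$ is $\omega$-big above $\sigma$. Hence $y_\sigma=\Phi(\sigma)$ has length $k<\langle\sigma\rangle$, and no $\sigma$ ever contributes to $\mathcal{N}$.

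So the rank argument you hope for is false. Here $A_{\Phi(\sigma)}$ is $\omega$-small above $\sigma$, while both $A_{\concat{\Phi(\sigma)}{0}}$ and $A_{\concat{\Phi(\sigma)}{1}}$ are $\omega$-big. This is an instance of the phenomenon the paper flags: the complement of an $\omega$-big set need not be $\omega$-small.

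The fallback does not repair this. In the example, the strings $y_\sigma$ for $\sigma\in E_{\bm{p}}$ of length $|\stm{p}|+k$ already range over all of $2^k$, so weights tied to $|y_\sigma|$ give no measure bound. Moreover, without totality of $y_\sigma$ you also lose the $\Delta^1_1(X)$-definability you wanted, since ``$|y_\sigma|\ge\ell$'' is only $\Sigma^1_1(X)$.

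The missing idea is the right notion of ``center''. Even in the classical proof, the real attached to a center is a path through the tree of strings that \emph{no} member of the center refutes. It is not the union of the strings that \emph{some} member forces. Here $\nu$ is unrefuted at $\sigma$ iff $V_\nu=\{\tau:\nu\preceq\Phi(\tau)\}$ is $\omega$-big above $\sigma$. The paper takes $P_\sigma$ to be the set of such $\nu$, or all of $2^{<\omega}$ if $\sigma\in\cl(\bad{p})$.

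The set of $\tau$ with $|\Phi(\tau)|\ge n$ equals $\bigcup_{|\nu|=n}V_\nu$ and is $\omega$-big above every $\sigma\in E_{\bm{p}}$. So preservation of smallness puts strings of every length into $P_\sigma$, and the $P_\sigma$ are uniformly $\Pi^1_1(X)$.

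A path through $P_\sigma$ need not be hyperarithmetic, so the paper does not choose one. Instead it uses admissibility to obtain uniformly $\Delta^1_1(X)$ infinite subsets $R_\sigma\subseteq P_\sigma$, which therefore lie in $\ideal{I}$. It then builds a finite-weight Solovay test $S\in\ideal{I}$ meeting every $R_\sigma$ infinitely often.

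Density is then straightforward. Given $\bm{q}\le\bm{p}$ and $n$, pick $\nu\in S\cap P_{\stm{q}}$ with $|\nu|\ge n$. Then $V_\nu$ is $\omega$-big above $\stm{q}$, so some $\tau\in V_\nu\cap E_{\bm{q}}$ gives an extension forcing $\nu\prec\Phi(x_G)$.
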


\begin{proof}
	Let $\Phi$ be an $\ideal{I}$-partial computable function from~$\baire$ to~$\cantor$, and let $\bm{p}\in \hechlerb$. For each~$n$, let $A_n$ be the collection of $\sigma\in\omega^{<\omega} $ such that $|\Phi(\tau)|\ge n$. If there is some~$n$ such that $E_{\bm{p}}\nsubseteq \cl(A_n)$, then by \cref{lem:Hechler:forcing_open}, $\bm{p}$ has an extension that forces that $\Phi(x_G)$ is not total. Assume otherwise; we show that $\bm{p}$ forces that $\Phi(x_G)$ is not $\ideal{I}$-random. 

	 Let $X\in \ideal{I}$ that computes $\fun{p}$, $\bad{p}$, and~$\Phi$. As in the proof of \cref{lem:Bushy_Laver:deciding_Pi_2}, we perform an effective construction in the least admissible set containing~$X$. For each $\nu\in 2^{<\omega}$ we let 
	\[
		V_\nu = \left\{ \sigma  \,:\,  \nu\preceq \Phi(\sigma) \right\}. 
	\]
	These sets are $X$-computable, uniformly, so the sets $\cl(V_\nu)$ are $\Pi^1_1(X)$, uniformly; and $\cl(\bad{p})$ is $\Pi^1_1(X)$. For $\sigma\in T_{\stm{p}, \fun{p}}$ extending $\stm{p}$ we define $P_\sigma\subseteq 2^{<\omega}$ as follows: 
	\begin{itemize}
		\item If $\sigma\notin \cl(\bad{p})$, then 
		\[
			P_\sigma = \left\{ \nu  \,:\,  \sigma\in \cl(V_\nu) \right\}. 
		\]
		\item If $\sigma\in \cl(\bad{p})$ then $P_\sigma = 2^{<\omega}$. 
	\end{itemize}
	Since $T_{\stm{p},\fun{p}}$ is $X$-computable, the sets $P_\sigma$ are $\Pi^1_1(X)$, uniformly (as long as we don't see $\sigma$ enter $\cl(\bad{p})$, we enumerate~$\nu$ into~$P_\sigma$ when we see~$\sigma$ enter $\cl(V_\nu)$; once we see~$\sigma$ enter $\cl(\bad{p})$, we throw all the strings into~$P_\sigma$). The main point is that each $P_\sigma$ is infinite: this is clear when $\sigma\in \cl(\bad{p})$. If $\sigma\notin \cl(\bad{p})$, i.e., if $\sigma\in E_{\bm{p}}$, then by assumption, for all~$n$, $A_n$ is $\omega$-big above~$\sigma$. For each~$n$, since there are only finitely many binary strings of length~$n$, by \cref{prop:closure_operator}, there is some $\nu\in 2^{<\omega}$ of length~$n$ such that $V_\nu$ is $\omega$-big above~$\sigma$, i.e., such that $\nu\in P_\sigma$. 

	By admissibility, there is some $X$-computable stage by which we see that every~$P_\sigma$ is infinite. This gives us uniformly $\Delta^1_1(X)$, infinite sets $R_\sigma\subseteq P_\sigma$. By assumption on~$\ideal{I}$, the array $(R_\sigma)$ is in~$\ideal{I}$. With these we can find a set $S\subset 2^{<\omega}$ in~$\ideal{I}$ with finite weight ($\sum_{\sigma\in S} 2^{-|\sigma|}$ is finite) such that for all~$\sigma$, $S\cap R_\sigma$ (and so $S\cap P_\sigma$) is infinite.

	The set~$S$ can be considered as a Solovay test; so any $y\in \cantor$ with infinitely many initial segments in~$S$ is not $\ideal{I}$-random. We claim that $\bm{p}$ forces that $\Phi(x_G)$ has infinitely many initial segments in~$S$. 

	To see this, let $\bm{q}\le \bm{p}$ and let $n<\omega$. Let $\nu\in S\cap P_{\stm{q}}$ have length $\ge n$. Since $\stm{q}\notin \cl(\bad{p})$, $V_{\nu}$ is $\omega$-big above~$\stm{q}$. So there is some $\sigma\in V_{\nu}\cap E_{\bm{q}}$, so $\bm{q}$ has an extension~$\bm{r}$ with $\stm{r}\in V_\nu$, i.e., $\Phi(\stm{r})\succeq \nu$; so~$\bm{r}$ (strongly) forces that $\nu\prec \Phi(x_G)$.
\end{proof}

\begin{remark} \label{rmk:Hechler_in_atr}
	As with Laver forcing, if $\ideal{I}$ is an $\omega$-model of $\atr$, then we can dispose of the bad sets, and force with Hechler trees. To see why, we observe that the argument of \cref{prop:ATR:Laver:fund} shows that if $B\in \ideal{I}$ is a name of an open set, then for all~$\sigma$, either $B$ is $\omega$-big above~$\sigma$, or there is a Hechler tree $T\in \ideal{I}$ with stem~$\sigma$ such that $T\cap B = \emptyset$. 	  
\end{remark}

\section{Iterating Cohen and Hechler forcing} \label{sec:Hechler_and_Cohen}
	
We turn to the proof of the second part of \cref{thm:main}: if $\ideal{I}$ is closed under $\lhyp$, then $\SNE{\ideal{I}}\subsetneq \SME{\ideal{I}}$. As discussed in the previous section, to show that the real~$z$ that we construct is not in $\SNE{\ideal{I}}$, we will ensure that $\ideal{I}[z]$ contains no $\ideal{I}$-random. The following lemma explains why we will use a 2-step iteration, adding a Cohen real~$x$ and a Hechler real over $\ideal{I}[x]$: 

\begin{lemma}\label{lem:truss}
	Let $\ideal{I}$ be any Turing ideal. If $x$ is $\ideal{I}$-Cohen and $y$ is $\ideal{I}[x]$-dominating, then $y \in \SME{\ideal{I}}$.
\end{lemma}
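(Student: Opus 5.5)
The plan is to show that $y$ itself $\ideal{I}$-computes a meagre $F_\sigma$ set containing every meagre set with a name in~$\ideal{I}$. I will use the standard combinatorial normal form for meagre sets (Bartoszy\'nski / Blass). For every meagre set $\mathcal{M}$ with a name in~$\ideal{I}$, there are an interval partition $(J_k)$ of~$\omega$ and a real $w\in\cantor$, both computable from the name (so both in~$\ideal{I}$), such that
\[
\mathcal{M}\subseteq \{z : (\forall^\infty k)\; z\upto J_k\neq w\upto J_k\}.
\]
To obtain these, take $\mathcal{M}=\bigcup_n \mathcal{P}_n$ with $(\mathcal{P}_n)$ increasing and nowhere dense, and build $J_k$ and $w\upto J_k$ so that every $\sigma$ of length $\min J_k$, extended by $w\upto J_k$, leaves $\mathcal{P}_k$. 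This is a routine $\ideal{I}$-effective construction, and I will take it as the first step.

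Next I use that $x$ is $\ideal{I}$-Cohen. For each such pair $((J_k),w)$ in~$\ideal{I}$, genericity gives infinitely many $k$ with $x\upto J_k = w\upto J_k$. Define $d_{(J,w)}(n)$ to be $\max J_k$ for the least $k$ with $\min J_k\ge n$ and $x\upto J_k=w\upto J_k$. Then $d_{(J,w)}\le_{\T} x\oplus(J,w)$, so it lies in $\ideal{I}[x]$, and $y$ dominates it. Hence every interval $[n,y(n)]$, for almost all~$n$, contains a full block $J_k$ on which $x$ agrees with~$w$. Consequently, if $z$ agrees with $x$ on almost all of the intervals of the partition $I^y_m=[y^{(m)},y^{(m+1)})$ (iterates of~$y$), then $z$ agrees with $w$ on infinitely many $J_k$, and so $z\notin\mathcal{M}$. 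It follows that
\[
\mathcal{M}\subseteq \mathcal{N}_{y,x}=\{z : (\forall^\infty m)\; z\upto I^y_m \neq x\upto I^y_m\},
\]
which is meagre, and its name is computable from $y\oplus x$.

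The main obstacle is removing $x$ from this name, since $y$ alone must $\ideal{I}$-compute the engulfing set. The fix I will try first is to replace the single pattern $x\upto I^y_m$ by a finite list of candidate patterns that $y$ can compute. To do this, let $y$ also dominate the $\ideal{I}[x]$-function $c(n)=$ the canonical code of $x\upto n$. Then from~$y$ one can bound, for each interval, a finite set $W_m$ of strings of length $|I^y_m|$ that contains $x\upto I^y_m$. Replace the requirement ``$z\upto I^y_m\neq x\upto I^y_m$'' by ``$z\upto I^y_m\notin W_m$''. This remains a meagre $F_\sigma$ condition provided that, in each interval, the patterns in $W_m$ fail to cover everything on a suitable sub-block. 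I would secure this by refining the partition using $y$ once more, so that each $I^y_m$ is long compared with $\log|W_m|$, giving a nowhere-dense condition as in the proof of \cref{lem:Laver_property_and_generics}.

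The inclusion $\mathcal{M}\subseteq\mathcal{N}_y$ then follows from the previous paragraph, since $x\upto I^y_m\in W_m$. It is this bookkeeping that I expect to require the most care: choosing the dominated functions in $\ideal{I}[x]$ so that the $W_m$ are $y$-computable and at the same time the sizes stay small relative to the interval lengths. Once that is in place, $\mathcal{N}_y$ has a name computable from $y$ (together with a computable parameter), so $y\in\SME{\ideal{I}}$.
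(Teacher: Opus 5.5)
\textbf{There is a genuine gap: the step that removes $x$ does not work.}

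First, dominating $c(n)=$ (code of $x\upto n$) gives you nothing. The function $c$ is bounded by a computable function, so for almost all $n$ every string of length $n$ has code at most $y(n)$. Your $W_m$ is therefore just the set of all strings of length $|I^y_m|$. In general, domination only supplies upper bounds, and reals other than $x$ satisfy them equally well.

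Second, and more fundamentally, even a genuinely small $W_m$ containing $x\upto I^y_m$ would not give $\mathcal{M}\subseteq\mathcal{N}_y$. Since $x\upto I^y_m\in W_m$, you have $\mathcal{N}_y\subseteq\mathcal{N}_{y,x}$. You have shrunk the engulfing set, so the claimed inclusion does not follow from your second paragraph. A real $z$ that copies some \emph{other} member of $W_m$ on infinitely many intervals lies outside $\mathcal{N}_y$, yet nothing forces it to agree with $w$ on any $J_k$. Concretely, with your $W_m$ you get $\mathcal{N}_y=\emptyset$, while $\{0^\omega\}$ is a meagre set with a computable name.

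For $\mathcal{N}_y$ to engulf, every pattern in $W_m$ would have to agree with $w$ on some $J_k\subseteq I^y_m$, for every $(J,w)\in\ideal{I}$. That is a genericity property of the patterns themselves, and a trace of $x$ cannot supply it. The size condition you propose is also beside the point. The set $\{z:(\forall^\infty m)\,z\upto I^y_m\notin W_m\}$ is meagre whenever the $W_m$ are nonempty. The counting from \cref{lem:Laver_property_and_generics} is a device for showing a real is \emph{not} Cohen, not for engulfing.

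\textbf{What you do have is correct and worth keeping.} The normal form $(J,w)\in\ideal{I}$, the function $d_{(J,w)}\in\ideal{I}[x]$, and the partition $I^y_m$ give $\mathcal{M}\subseteq\mathcal{N}_{y,x}$, hence $x\oplus y\in\SME{\ideal{I}}$. Fix one quantifier: you need that if $z$ agrees with $x$ on \emph{infinitely many} $I^y_m$ then $z\notin\mathcal{M}$, not ``almost all''; the latter would only yield a comeagre superset.

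This is a direct unpacking of the classical argument behind the sequential-composition morphism of \cite[Proposition 4.9]{GKT}, which the paper cites instead of giving a proof. It is also exactly the form in which the lemma is used: \cref{cor:iterated_forcing:in_SME} concludes $(x_G,y_G)\in\SME{\ideal{I}}$, with $x_G$ available as an oracle. So stop after your second paragraph and state the result for the pair. A proof for $y$ alone would need a different idea: a real $\ideal{I}$-computable from $y$ itself with the strong agreement property above.
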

	
Lemma \ref{lem:truss} follows from \cite[Proposition 4.9]{GKT}. The latter Proposition is an effective-morphism-based proof of the equalities in Cichon's diagram summarized in \cite[Corollary 4.10]{GKT}, originally due to Truss, A. Miller, and Fremlin. The morphism argument relies on the sequential composition of Weihrauch problems. 

\subsection{Bigness for sets of pairs}

In set theory, the 2-step iteration theorem states that if $\mathbb{P}$ is a notion of forcing, and $\mathbb{Q}$ is a $\mathbb{P}$-name for a notion of forcing, then there is a single notion of forcing $\mathbb{P}\star \mathbb{Q}$ (in the ground model) which is forcing equivalent to forcing with~$\mathbb{P}$ and then with the interpretation of $\mathbb{Q}$ in $V^{\mathbb{P}}$. This equivalence is not, in general, effective, and in computability theory, there are situations in which it is important that the two steps are performed sequentially (see for example \cite{Introreducible}). For our purposes, though, it would be simpler to combine the steps into a single notion of forcing. Thus, conditions in our notion of forcing will consist of a Cohen condition, and a continuous Cohen-name for a Hechler condition in the extension. To be able to decide open sets, we will again need bad sets, in this case sets of pairs of strings (a Cohen condition, and a possible stem of a Hechler condition). The main work therefore is in adapting the notion of $\omega$-bigness and closure to this setting. For a similar situation regarding iterations of bushy tree forcing see~\cite{CGM}.

\begin{notation}
	For $(\sigma,\tau), (\sigma',\tau')\in \upairs$, we write:
	\begin{itemize}
		\item $(\sigma,\tau)\preceq (\sigma',\tau')$ if $\sigma\preceq \sigma'$ and $\tau\preceq\tau'$. 
		\item $(\sigma,\tau)\prec (\sigma',\tau')$ if $\sigma\prec \sigma'$ and $\tau\prec\tau'$ (so strict extension in both coordinates). 
	\end{itemize}
	We say that $B\subseteq \upairs$ is \emph{upwards closed} if for all $(\sigma,\tau)\preceq (\sigma',\tau')$ in $\upairs$, if $(\sigma,\tau)\in B$ then $(\sigma',\tau')\in B$. 
\end{notation}

\begin{definition}
	Recall that a set $A\subseteq 2^{<\omega}$ is \emph{dense above} a string $\sigma\in 2^{<\omega}$ if every $\sigma'\succeq \sigma$ has an extension in~$A$. 

	We say that $A\subseteq 2^{<\omega}$ is \emph{somewhere dense above $\sigma$} if it is dense above some $\sigma'\succeq \sigma$. Otherwise we say that it is \emph{nowhere dense} above~$\sigma$.   
\end{definition}

\begin{definition} \label{def:omega-closed_for_pairs}
	A set $B\subseteq 2^{<\omega}\times \omega^{<\omega}$ is \emph{$\omega$-closed} if for every pair $(\sigma,\tau)\in \upairs$, if
	\[
		\left\{ \sigma'\succeq\sigma  \,:\,  (\exists^\infty n)\,\,(\sigma', \concat{\tau}{n})\in B \right\}
	\]	
	is somewhere dense above~$\sigma$, then $(\sigma,\tau)\in B$. 
\end{definition}

We describe the closure of a set of pairs, both via ranks and via ``bigness witnesses'' analogous to $\omega$-bushy trees. We start with the former. 

\begin{definition} \label{def:rank_for_set_of_pairs}
	Let $B\subseteq 2^{<\omega}\times \omega^{<\omega}$. By induction on ordinals~$\alpha$ we define the relations $\rk_B(\sigma,\tau) \le \alpha$ for $(\sigma,\tau)\in\upairs$: 
	\begin{itemize}
		\item $\rk_B(\sigma,\tau)\le 0$ if $(\sigma,\tau) \in B$. 

		\item For $\alpha>0$, $\rk_B(\sigma,\tau)\le \alpha$ if the set 
		\[
			\left\{ \sigma'\succeq \sigma  \,:\,  (\exists^\infty n)\,\,\rk_B(\sigma',\concat{\tau}n)<\alpha \right\}
		\]
		is somewhere dense above~$\sigma$ (where again, $\rk_B(\sigma,\tau)<\alpha$ means $\rk_B(\sigma,\tau)\le \beta$ for some $\beta<\alpha$). 
	\end{itemize}
	As before, we write $\rk_B(\sigma,\tau)<\infty$ if $\rk_B(\sigma,\tau)\le \alpha$ for some~$\alpha$, and we let $\rk_B(\sigma,\tau)$ denote the least such~$\alpha$; if there is no such~$\alpha$ we write $\rk_B(\sigma,\tau)=\infty$. 
\end{definition}

We turn to bigness witnesses. To motivate the definition, recall that for sets of strings, the ``canonical'' bigness witness was the collection of strings with strictly decreasing ranks. 

\begin{definition} \label{def:bigness_witness_for_sets_of_pairs}
	Let $(\rho,\nu)\in \upairs$. A \emph{bigness witness above $(\rho,\nu)$} is a tree~$R$ of nonempty finite sequences whose entries are pairs in $\upairs$, satisfying: 
	\begin{itemize}
		\item The only sequence of length~1 on~$R$ is $\smallseq{(\rho,\nu)}$ (we call that sequence the root of~$R$);

		\item If $(\sigma,\tau)$ and $(\sigma',\tau')$ are successive entries of a sequence $s\in R$, then $(\sigma,\tau)\prec (\sigma',\tau')$, and $|\tau'| = |\tau|+1$; 

		\item	Suppose that $s\in R$ is not a leaf of~$R$; let $(\sigma,\tau)$ be the last entry of~$s$. Then 
		\[
			\left\{ \sigma'\succ \sigma  \,:\, (\exists^\infty n)\,\,\concat{s}{\smallseq{(\sigma',\concat{\tau}n)}} \in R \right\}
		\]
		is somewhere dense above~$\sigma$. 


	\end{itemize}
\end{definition}

\begin{lemma} \label{lem:equivalence_of_rank_and_bigness_witnesses_for_pairs}
	Let $B\subseteq \upairs$, and let $(\sigma,\tau)\in \upairs$. The following are equivalent:
	\begin{enumerate}
		\item $\rk_B(\sigma,\tau)<\infty$; 
		\item There is a well-founded bigness witness~$R$ above $(\sigma,\tau)$, such that for every leaf~$s$ of~$R$, the last entry of~$s$ is in~$B$. 
	\end{enumerate}
\end{lemma}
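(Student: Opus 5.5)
We mirror the proof of \cref{lem:equivalence_of_ranked_and_bigness}, with the canonical witness again being sequences of strictly decreasing ranks.

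For (2)$\Rightarrow$(1), let $R$ be a well-founded bigness witness above $(\sigma,\tau)$ whose leaves end in~$B$. For $s\in R$ let $\rk_R(s)$ be its tree rank, so leaves have rank~$0$. By induction on $\rk_R(s)$ we show that if $(\sigma',\tau')$ is the last entry of~$s$ then $\rk_B(\sigma',\tau')\le \rk_R(s)$. For a leaf this holds because its last entry is in~$B$. Now let $s$ be a non-leaf with last entry $(\sigma',\tau')$. The set of $\sigma''\succ\sigma'$ such that $\concat{s}{\smallseq{(\sigma'',\concat{\tau'}{n})}}\in R$ for infinitely many~$n$ is somewhere dense above~$\sigma'$. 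Each such child has $R$-rank $<\rk_R(s)$, so by induction $\rk_B(\sigma'',\concat{\tau'}{n})<\rk_R(s)$ for those infinitely many~$n$. The definition of rank then gives $\rk_B(\sigma',\tau')\le\rk_R(s)$. Applying this at the root yields $\rk_B(\sigma,\tau)<\infty$.

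For (1)$\Rightarrow$(2), assume $\rk_B(\sigma,\tau)<\infty$. Let $R$ consist of all finite sequences $\smallseq{(\sigma_0,\tau_0),\dots,(\sigma_k,\tau_k)}$ with the following properties:
\begin{itemize}
\item $(\sigma_0,\tau_0)=(\sigma,\tau)$;
\item successive entries satisfy $(\sigma_j,\tau_j)\prec(\sigma_{j+1},\tau_{j+1})$ and $|\tau_{j+1}|=|\tau_j|+1$;
\item all ranks are finite and strictly decreasing, i.e.\ $\rk_B(\sigma_{j+1},\tau_{j+1})<\rk_B(\sigma_j,\tau_j)<\infty$;
\item no entry other than possibly the last has rank~$0$.
\end{itemize}
The last condition ensures that rank-$0$ entries are leaves. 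Strictly decreasing ordinals make $R$ well-founded, and the root is $\smallseq{(\sigma,\tau)}$. Let $s\in R$ end in $(\sigma',\tau')$ with $\alpha=\rk_B(\sigma',\tau')>0$. By definition, the set $D$ of $\sigma''\succeq\sigma'$ such that $\rk_B(\sigma'',\concat{\tau'}n)<\alpha$ for infinitely many~$n$ is somewhere dense above~$\sigma'$.

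We need this with \emph{strict} extensions $\sigma''\succ\sigma'$. If $D$ is dense above some $\rho\succeq\sigma'$, then $D\setminus\{\sigma'\}$ is dense above $\concat{\rho}{0}$. So $D\cap\{\sigma'':\sigma''\succ\sigma'\}$ is still somewhere dense, and all these extensions are admissible children of $s$ in~$R$. Hence $s$ is not a leaf and satisfies the branching clause. Conversely, every leaf must have rank~$0$, since a positive-rank endpoint has children. So every leaf's last entry lies in~$B$.

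Two small points need checking. First, rank-$0$ nodes might in principle have children in $R$; excluding them via the last condition makes them leaves, which the definition of a bigness witness allows. Second, the strict-extension adjustment above is needed. Beyond these, the argument is routine.
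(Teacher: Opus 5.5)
Your proof is correct and is essentially the paper's argument: induction on tree rank for (2)$\Rightarrow$(1), and for (1)$\Rightarrow$(2) the canonical witness made of sequences with strictly decreasing $B$-ranks, whose leaves are the sequences ending in a rank-$0$ entry, i.e.\ in $B$. Your check that restricting to strict extensions $\sigma''\succ\sigma'$ keeps the set somewhere dense (via $\concat{\rho}{0}$) fills in a detail the paper leaves implicit; your clause forbidding non-final rank-$0$ entries is harmless but redundant, since strictly decreasing ranks already rule this out.
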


When these equivalent conditions hold, we say that \emph{$B$ is $\omega$-big above $(\sigma,\tau)$}. 

\begin{proof}
This is similar to the proof of \cref{lem:equivalence_of_ranked_and_bigness}. In one direction, suppose that $R$ is a well-founded bigness witness above $(\sigma,\tau)$, where the last entry of any leaf of~$R$ is in~$B$. By induction on $\rk_R(s)$ for $s\in R$, we see if $(\rho,\nu)$ is the last entry of~$s$, then $\rk_B(\rho,\nu)\le \rk_R(s)$. 

In the other direction, suppose that $\rk_B(\sigma,\tau)< \infty$. We define a bigness witness $R$ above $(\sigma,\tau)$ recursively: we start with the root $\seq{(\sigma,\tau)}$. Suppose that $s\in R$, and let $(\rho,\nu)$ be the last entry of~$s$. If $(\rho,\nu)\in B$ then~$s$ is a leaf of~$R$. Otherwise, we let the children of~$s$ on~$R$ be all the sequences $\concat{s}{\smallseq{(\rho',\concat{\nu}{n}})}$ where $\rho'\succ \rho$ and $\rk_B(\rho',\concat{\nu}{n})< \rk_B(\rho,\nu)$. 
\end{proof}

\begin{definition} \label{def:omega_closure_of_sets_of_pairs}
	For $B\subseteq \upairs$, we let $\cl(B)$ be the collection of all $(\sigma,\tau)$ such that $B$ is $\omega$-big above $(\sigma,\tau)$. 
\end{definition}

The analogue of \cref{lem:characterisation_of_omega-closed} holds, with a similar argument:

\begin{lemma} \label{lem:characterisation_of_omega_closed_sets_of_pairs}
	A set $B\subseteq \upairs$ is $\omega$-closed if and only if $B = \cl(B)$. 
\end{lemma}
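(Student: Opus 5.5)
We prove the two directions separately, following the proof of \cref{lem:characterisation_of_omega-closed} but with the single-string rank replaced by the rank of \cref{def:rank_for_set_of_pairs}. We always have $B\subseteq\cl(B)$, since $(\sigma,\tau)\in B$ gives $\rk_B(\sigma,\tau)\le 0$. So we only need to compare $\cl(B)$ with $B$.

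For the direction ``$\omega$-closed implies $B=\cl(B)$'', we show by induction on $\alpha\ge 0$ that $\rk_B(\sigma,\tau)\le\alpha$ implies $(\sigma,\tau)\in B$. The case $\alpha=0$ is the definition. Now let $\alpha>0$ and suppose $\rk_B(\sigma,\tau)\le\alpha$. Then the set
\[
D=\left\{ \sigma'\succeq\sigma \,:\, (\exists^\infty n)\,\,\rk_B(\sigma',\concat{\tau}{n})<\alpha\right\}
\]
is somewhere dense above~$\sigma$. By the induction hypothesis, every pair $(\sigma',\concat{\tau}n)$ with rank $<\alpha$ lies in~$B$. Hence $D$ is contained in
\[
\left\{ \sigma'\succeq\sigma \,:\, (\exists^\infty n)\,\,(\sigma',\concat{\tau}{n})\in B\right\}.
\]
A superset of a set that is somewhere dense above~$\sigma$ is itself somewhere dense above~$\sigma$. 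So $\omega$-closedness (\cref{def:omega-closed_for_pairs}) gives $(\sigma,\tau)\in B$. It follows that $\rk_B(\sigma,\tau)<\infty$ implies $(\sigma,\tau)\in B$, that is, $\cl(B)\subseteq B$.

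For the converse, suppose $B=\cl(B)$, and suppose the set of $\sigma'\succeq\sigma$ for which $(\sigma',\concat{\tau}n)\in B$ holds for infinitely many~$n$ is somewhere dense above~$\sigma$. Each such pair has $\rk_B(\sigma',\concat{\tau}n)\le 0<1$. The defining set for ``$\rk_B(\sigma,\tau)\le 1$'' therefore contains our somewhere-dense set, so $\rk_B(\sigma,\tau)\le 1$. Thus $(\sigma,\tau)\in\cl(B)=B$, and $B$ is $\omega$-closed.

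Both directions are routine, and no real obstacle is expected. The only point to check is that ``somewhere dense above~$\sigma$'' is monotone under enlarging the set, which holds immediately from the definition. Unlike the bigness-witness formulation, the rank formulation requires no strict extension $\sigma'\succ\sigma$. Working with ranks throughout, via \cref{lem:equivalence_of_rank_and_bigness_witnesses_for_pairs} and \cref{def:omega_closure_of_sets_of_pairs}, therefore avoids handling that condition.
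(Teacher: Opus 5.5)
Your proof is correct and takes the paper's approach: the paper proves this lemma by the same argument as \cref{lem:characterisation_of_omega-closed}, namely an induction on $\alpha$ showing that $\omega$-closedness puts every pair with $\rk_B\le\alpha$ into $B$, together with the converse observation that the $\omega$-closedness hypothesis yields $\rk_B(\sigma,\tau)\le 1$, hence membership in $\cl(B)=B$. Working with ranks rather than bigness witnesses is fine, since $\cl(B)$ is defined through the equivalent conditions of \cref{lem:equivalence_of_rank_and_bigness_witnesses_for_pairs}.
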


We also obtain the two ``bigness properties'':

\begin{lemma} \label{lem:concatenation_and_preservation_of_smallness_for_pairs}
	Let $A,B,C\subseteq \upairs$.
	\begin{enumerate}
		\item Suppose that $B$ is $\omega$-big above~$(\sigma,\tau)$, and that for all $(\rho,\nu)\in B$, $C$ is $\omega$-big above $(\rho,\nu)$.  Then~$C$ is $\omega$-big above $(\sigma,\tau)$. 

		\item If $A\cup B$ is $\omega$-big above $(\sigma,\tau)$, then either~$A$ or~$B$ are $\omega$-big above $(\sigma,\tau)$. 
	\end{enumerate}
\end{lemma}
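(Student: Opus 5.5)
For~(1) I will follow the proof of \cref{lem:concatenation_and_preservation_of_smallness}(1) and glue bigness witnesses together. Fix a well-founded bigness witness~$R$ above $(\sigma,\tau)$ all of whose leaves end in~$B$. For each leaf $s$ of~$R$ with last entry $(\rho,\nu)\in B$, fix a well-founded bigness witness $R_{(\rho,\nu)}$ above $(\rho,\nu)$ whose leaves end in~$C$, as given by \cref{lem:equivalence_of_rank_and_bigness_witnesses_for_pairs}. Every sequence in $R_{(\rho,\nu)}$ begins with $(\rho,\nu)$, so I can form $s$ followed by the tail of $t$ (deleting its first entry) for each $t\in R_{(\rho,\nu)}$. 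Let $S$ consist of the non-leaf nodes of~$R$ together with all these grafted sequences. The conditions of \cref{def:bigness_witness_for_sets_of_pairs} are local: each concerns consecutive entries, or the children of a single node. Hence they hold at non-leaf nodes of~$R$ because~$R$ is a witness, and at grafted nodes because each $R_{(\rho,\nu)}$ is a witness. A former leaf~$s$ of~$R$ now has exactly the children of the root of $R_{(\rho,\nu)}$, so the somewhere-density condition transfers (or~$s$ remains a leaf ending in~$C$ if the root of $R_{(\rho,\nu)}$ is a leaf). The tree~$S$ is well-founded, since any infinite branch would eventually lie in some $R_{(\rho,\nu)}$, and its leaves end in~$C$.

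For~(2) I will prove by induction on~$\alpha$ that $\rk_{A\cup B}(\rho,\nu)\le\alpha$ implies $\rk_A(\rho,\nu)\le\alpha$ or $\rk_B(\rho,\nu)\le\alpha$. The case $\alpha=0$ is immediate. For $\alpha>0$, let
\[
	D=\left\{\rho'\succeq\rho \,:\, (\exists^\infty n)\,\rk_{A\cup B}(\rho',\concat{\nu}n)<\alpha\right\},
\]
which is dense above some $\rho_0\succeq\rho$. For each $\rho'\in D$, the induction hypothesis gives that for infinitely many~$n$, either $\rk_A(\rho',\concat{\nu}n)<\alpha$ or $\rk_B(\rho',\concat{\nu}n)<\alpha$. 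By pigeonhole, one of the two alternatives occurs for infinitely many~$n$. So $D\subseteq D_A\cup D_B$, where $D_A$ and $D_B$ are defined like~$D$ but using $\rk_A$ and $\rk_B$ respectively.

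The key step, which I expect to be the only real point, is the Baire-category fact that if $D_A\cup D_B$ is dense above~$\rho_0$, then one of $D_A$, $D_B$ is somewhere dense above~$\rho_0$. Suppose $D_A$ is nowhere dense above~$\rho_0$. Then there is some $\rho_1\succeq\rho_0$ none of whose extensions lie in $D_A$; otherwise $D_A$ would be dense above~$\rho_0$. Every extension of~$\rho_1$ has an extension in $D_A\cup D_B$, and that extension must lie in~$D_B$, so $D_B$ is dense above~$\rho_1$. In either case we obtain $\rk_A(\rho,\nu)\le\alpha$ or $\rk_B(\rho,\nu)\le\alpha$. Applying this at $(\sigma,\tau)$ with $\alpha=\rk_{A\cup B}(\sigma,\tau)$ gives~(2).
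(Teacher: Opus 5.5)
Your proposal is correct and follows essentially the paper's route: for (1) you graft bigness witnesses onto the leaves, and for (2) you induct on rank, use pigeonhole to get $D\subseteq D_A\cup D_B$, and apply the fact that a union of two sets nowhere dense above a string is nowhere dense. The paper phrases (2) as the equality $\rk_{A\cup B}(\sigma,\tau)=\min\{\rk_A(\sigma,\tau),\rk_B(\sigma,\tau)\}$; you prove only its nontrivial inequality, which is all that is needed, and you spell out the pigeonhole and nowhere-density steps that the paper leaves implicit.
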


\begin{proof}
	(1) follows from the same concatenation argument of \cref{lem:concatenation_and_preservation_of_smallness}, using bigness witnesses. 

	For~(2), we again show by induction on $\rk_{A\cup B}(\sigma,\tau)$ that
	\[
		\rk_{A\cup B}(\sigma,\tau) = \min \{ \rk_{A}(\sigma,\tau), \rk_{B}(\sigma,\tau) \}. 
	\]
	This certainly holds if $\rk_{A\cup B}(\sigma,\tau)=0$. Suppose that this holds for all pairs with $A\cup B$-rank below~$\alpha$; suppose that $\rk_{A\cup B}(\sigma,\tau) = \alpha$. For $X\in \{A,B,A\cup B\}$  let 
	\[
		D_X = \left\{ \sigma'\succ \sigma  \,:\, (\exists^\infty n)\,\,\rk_X(\sigma', \concat{\tau}n) <\alpha   \right\}. 
	\]
	By induction, $D_{A\cup B} = D_A\cup D_B$. The union of two sets that are nowhere dense above~$\sigma$ is nowhere dense above~$\sigma$; since $D_{A\cup B}$ is somewhere dense above~$\sigma$, at least one of $D_A$ or~$D_B$ is somewhere dense above~$\sigma$, so $\rk_A(\sigma,\tau)\le \alpha$  or $\rk_B(\sigma,\tau)\le \alpha$. 
\end{proof}

And translated to the closure operator:

\begin{proposition} \label{prop:closure_operator_for_pairs}
	The operator~$\cl$ on subsets of~$\upairs$ satisfies:
	\begin{enumerate}
		\item $B\subseteq \cl(B)$; 
		\item $\cl(\cl(B)) = \cl(B)$; and
		\item $\cl(A\cup B) = \cl(A)\cup \cl(B)$. 
	\end{enumerate}
\end{proposition}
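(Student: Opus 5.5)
We derive the three clauses from the rank description of $\cl$ in \cref{def:rank_for_set_of_pairs}, together with \cref{lem:concatenation_and_preservation_of_smallness_for_pairs}, just as \cref{prop:closure_operator} follows from \cref{lem:concatenation_and_preservation_of_smallness}.

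For (1), if $(\sigma,\tau)\in B$ then $\rk_B(\sigma,\tau)\le 0$. Hence $B$ is $\omega$-big above $(\sigma,\tau)$, and so $(\sigma,\tau)\in\cl(B)$. For (3), the inclusion $\cl(A)\cup\cl(B)\subseteq\cl(A\cup B)$ is monotonicity of $\cl$. To see monotonicity, note that if $A\subseteq A'$ then $\rk_{A'}\le\rk_A$ pointwise, by induction on $\rk_A$. The condition at successor and limit levels is ``somewhere dense'', which is preserved under enlarging the set of $\sigma'$, so the induction goes through. The reverse inclusion $\cl(A\cup B)\subseteq\cl(A)\cup\cl(B)$ is exactly \cref{lem:concatenation_and_preservation_of_smallness_for_pairs}(2): if $A\cup B$ is $\omega$-big above $(\sigma,\tau)$, then one of $A$ or $B$ is.

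For (2), the inclusion $\cl(B)\subseteq\cl(\cl(B))$ is (1) applied to $\cl(B)$. For the reverse, suppose $(\sigma,\tau)\in\cl(\cl(B))$. Then $\cl(B)$ is $\omega$-big above $(\sigma,\tau)$, and by definition, for every $(\rho,\nu)\in\cl(B)$, $B$ is $\omega$-big above $(\rho,\nu)$. Applying \cref{lem:concatenation_and_preservation_of_smallness_for_pairs}(1) with $\cl(B)$ in the role of the intermediate set and $C=B$ gives that $B$ is $\omega$-big above $(\sigma,\tau)$. Thus $(\sigma,\tau)\in\cl(B)$.

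All of this is routine once the lemma is granted. The only point that needs checking is the concatenation step hidden in \cref{lem:concatenation_and_preservation_of_smallness_for_pairs}(1). There, grafting a bigness witness $R_{(\rho,\nu)}$ onto each leaf of $R$ whose last entry is $(\rho,\nu)$ must again give a bigness witness. This holds because the root of $R_{(\rho,\nu)}$ is $\smallseq{(\rho,\nu)}$. We identify it with the leaf and append the remaining entries. The conditions on successive entries (strict extension in both coordinates, with the second coordinate growing by exactly one) and the somewhere-density condition at internal nodes are local, so they are inherited. Well-foundedness is preserved because every branch passes through finitely many entries of $R$ and then finitely many of a single $R_{(\rho,\nu)}$. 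We take that lemma as given, so the proposition follows immediately.
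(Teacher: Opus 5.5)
Your proposal is correct and follows essentially the paper's route: the paper derives the proposition directly from \cref{lem:concatenation_and_preservation_of_smallness_for_pairs}, obtaining (1) from $\rk_B\le 0$ on $B$, the nontrivial half of (2) from the concatenation clause with $\cl(B)$ as the intermediate set, and (3) from monotonicity together with preservation of smallness. Your check that grafting bigness witnesses keeps the local conditions and well-foundedness is the concatenation argument the paper only alludes to.
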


The arguments for \cref{prop:closure_of_Pi11_sets} give:

\begin{proposition} \label{prop:closure_of_Pi11_sets_of_pairs}
	Suppose that $B\subseteq \upairs$ is $\Pi^1_1$. 
	\begin{enumerate}
		\item For all $(\sigma,\tau)$, if $\rk_B(\sigma,\tau)<\infty$ then $\rk_B(\sigma,\tau)< \ock$.
		\item If $B$ is $\omega$-big above~$(\sigma,\tau)$, then there is a $\Delta^1_1$ bigness witness~$R$ above~$(\sigma,\tau)$ witnessing this. 
		\item $\cl(B)$ is $\Pi^1_1$. 
	\end{enumerate}
\end{proposition}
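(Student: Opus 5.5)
We follow the proof of \cref{prop:closure_of_Pi11_sets}, now for pairs. Fix an $\ock$-computable enumeration $(B_s)_{s<\ock}$ of~$B$. First we check that, for each computable $\alpha$ and $s$, the relation ``$\rk_{B_s}(\sigma,\tau)\le\alpha$'' is $\Delta^1_1$, uniformly in (notations for) $\alpha$ and $s$. This holds by effective transfinite recursion. The inductive clause asks whether a set of strings, defined from lower levels by ``$\exists^\infty n$'', is somewhere dense above~$\sigma$, and this is arithmetical in the lower levels. Monotonicity is also immediate: $B_s\subseteq B_t$ gives $\rk_{B_t}\le\rk_{B_s}$.

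The core step is an induction on $\alpha<\ock$. The claim is that if $\rk_B(\sigma,\tau)=\alpha$, then there is a computable stage $s$ with $\rk_{B_s}(\sigma,\tau)=\alpha$. For $\alpha>0$, let
\[
D=\{\sigma'\succeq\sigma : (\exists^\infty n)\,\rk_B(\sigma',\concat{\tau}{n})<\alpha\}.
\]
It is dense above some $\sigma_0\succeq\sigma$. Fix one such $\sigma_0$; this choice is a single finite object. Above $\sigma_0$, the set of $\sigma'\in D$ together with its infinite witness sets $A_{\sigma'}=\{n : \rk_B(\sigma',\concat{\tau}{n})<\alpha\}$ is $\Pi^1_1$, by the induction hypothesis. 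This is the main obstacle. We need to thin $D$ and each $A_{\sigma'}$ to $\Delta^1_1$ witnesses while keeping density, so that one countable family of instances bounds the stage.

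We handle it as follows. Uniformly in $\rho\succeq\sigma_0$, we search by $\Pi^1_1$ selection for some $\sigma'\succeq\rho$ in $D$; there is a $\Delta^1_1$ selector, since the set of pairs is $\Pi^1_1$ and every $\rho$ has a witness. For each selected $\sigma'$, $A_{\sigma'}$ is an infinite $\Pi^1_1$ set, so it has an infinite $\Delta^1_1$ subset $C_{\sigma'}$, uniformly. Admissibility of $\ock$, applied to this $\Delta^1_1$ countable family of pairs $(\sigma',\concat{\tau}{n})$, now gives two bounds. First, the ranks are bounded by some $\beta<\alpha$. Second, by induction, there is one computable stage $s$ beyond which all these ranks have stabilised. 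The selected $\sigma'$ are dense above $\sigma_0$, so $\rk_{B_s}(\sigma,\tau)\le\alpha$, and monotonicity gives equality.

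Item~(1) follows as before. Suppose $\rk_B(\sigma,\tau)\le\ock$. Running the same selection argument yields a $\Delta^1_1$ family of pairs of computable rank, indexed densely above some $\sigma_0$. By admissibility their ranks are bounded below $\ock$, so $\rk_B(\sigma,\tau)<\ock$. Induction on ordinals $\ge\ock$ then shows that no rank is $\ge\ock$ unless it is $\infty$.

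For~(2), if $\rk_B(\sigma,\tau)=\alpha<\ock$, choose $s$ with $\rk_{B_s}(\sigma,\tau)=\alpha$. The canonical bigness witness from \cref{lem:equivalence_of_rank_and_bigness_witnesses_for_pairs}, built from $B_s$, consists of sequences with strictly decreasing $\rk_{B_s}$-ranks. It is therefore $\Delta^1_1$, and its leaves lie in $B_s\subseteq B$. For~(3), by (1) and the stage analysis, $(\sigma,\tau)\in\cl(B)$ if and only if there exist a computable $\alpha$ and a computable $s$ with $\rk_{B_s}(\sigma,\tau)\le\alpha$. This condition is $\Pi^1_1$.
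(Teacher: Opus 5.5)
Your proposal is correct and follows the paper's route. The paper only says that the argument of \cref{prop:closure_of_Pi11_sets} carries over, and your $\Delta^1_1$ selection of a dense family of witnesses $\sigma'\succeq\rho$, with uniformly $\Delta^1_1$ infinite subsets $C_{\sigma'}$, is exactly what makes that transfer work. One correction to the bounding step: admissibility does not give a single $\beta<\alpha$ bounding the children's ranks (for limit $\alpha$ this would force $\rk_B(\sigma,\tau)\le\beta+1<\alpha$), nor in general a uniform stage at which all those ranks have stabilised; what $\Sigma_1$-bounding does give is one stage $s$ with $\rk_{B_s}(\sigma',\concat{\tau}{n})<\alpha$ for every pair in your family, and that is all your conclusion $\rk_{B_s}(\sigma,\tau)\le\alpha$ uses.
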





	
\subsection{The notion of forcing~$\iter$}

	We again fix a countable Turing ideal $\ideal{I}$ which is closed under $\lhyp$. We are ready to present our effective version of $\cohen \star \hechler$ over~$\ideal{I}$, with bad sets. We continue to use standard notation from computability regarding partial continuous functions and their names (codes, or ``functionals''). In particular, we will be using names for partial continuous functions from~$\cantor$ to~$\baire$. If~$f$ is such a name, then for each $\sigma\in 2^{<\omega}$ we let $f(\sigma)$ be the longest string $\tau\in \omega^{<\omega}$ such that $|\tau|\le |\sigma|$, and for all $m<|\tau|$, $f(\sigma,m)\converge = \tau(m)$. For such a name~$f$, and a string $\sigma\in 2^{<\omega}$, telling whether $\dom f$ (considered as a partial function on~$\cantor$) is comeagre in~$[\sigma]$ is arithmetical in~$f$: for all~$n$, $\left\{ \sigma'\succeq \sigma  \,:\,  |f(\sigma')|\ge n \right\}$ is dense above~$\sigma$ (in other words, as a Cohen condition, $\sigma$ forces that $f(x_G)$ is total). 

\begin{notation}
	Above, we only used the notation $\tau\le \rho$ for sequences of the same length. In this section we extend it as follows: we write $\tau\le \rho$ when $|\tau|\ge |\rho|$ and $\tau(m)\le \rho(m)$ for all $m<|\rho|$. Note that this is indeed a partial ordering on~$\omega^{<\omega}$.
\end{notation}

	\begin{definition}
		We define a forcing notion $\iter = \iter(\ideal{I})$ as follows.
		
		Elements of $\iter$ are quadruples $\bm{p}=(\mindom{p},\stm{p},\fun{p},\bad{p})$ such that
		\begin{enumerate}
			\item $\mindom{p}\in 2^{<\omega}$, and $\stm{p}\in \omega^{<\omega}$; 
			\item $\fun{p}\in \ideal{I}$ is a name of a partial continuous function from~$\cantor$ to~$\baire$ satisfying:
			\begin{itemize}
			 	\item $\dom \fun{p}$ is comeagre in~$[\mindom{p}]$; and
			 	\item $\fun{p}(\mindom{p}) \le  \stm{p}$. 
			 \end{itemize} 
			\item $\bad{p}\subseteq 2^{<\omega}\times \omega^{<\omega}$ is upwards closed, $\bad{p}\in \ideal{I}$, and $(\mindom{p}, \stm{p}) \notin \cl(\bad{p})$.  
		\end{enumerate}
		
		A condition $\bm{q}$ extends another condition $\bm{p}$ if:
		\begin{enumerate}
			\item $\mindom{q} \succeq \mindom{p}$ and $\stm{q}\succeq \stm{p}$; 
			\item $\bad{q} \supseteq \bad{p}$; and
			\item For all $\sigma\succeq \mindom{p}$,  $\fun{q}(\sigma) \geq \fun{p}(\sigma)$.
		\end{enumerate}
	\end{definition}
		
	Intuitively, one can view a condition $\bm{p}$ as consisting of the following components: $\mindom{p}$ is ``the Cohen part'', $\stm{p}$ is the finite part of a Hechler condition, $\fun{p}$ is a ``Cohen name'' for a function which will give us the infinitary part of a Hechler condition, and $\bad{p}$ is a Cohen name for a set of bad strings in $\omega^{<\omega}$ that the Hechler condition needs to avoid, as in the previous section. 

	\smallskip

	We now provide the basic properties of $\iter$. First, we observe that~$\iter$ is $\sigma$-centered (essentially because it is the iteration of two $\sigma$-centered notions of forcing).
	\begin{lemma}
		If $\bm{p}, \bm{q} \in \iter$ and $(\mindom{p}, \stm{p})=(\mindom{q}, \stm{q})$, then $\bm{p}$ and $\bm{q}$ are compatible in~$\iter$. 
	\end{lemma}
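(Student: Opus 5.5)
The common extension is built componentwise, following the $\sigma$-centredness lemma for $\hechlerb$. Write $\sigma = \mindom{p}=\mindom{q}$ and $\tau = \stm{p}=\stm{q}$. Define $\bm{r}$ by
\[
	\mindom{r}=\sigma,\quad \stm{r}=\tau,\quad \fun{r} = \max\{\fun{p},\fun{q}\},\quad \bad{r} = \bad{p}\cup\bad{q},
\]
where $\fun{r}$ is the pointwise maximum of the two partial continuous functions. That is, $\fun{r}(x,m)$ is defined exactly when both $\fun{p}(x,m)$ and $\fun{q}(x,m)$ are, and then equals the larger value. A name for $\fun{r}$ is computable from names of $\fun{p}$ and $\fun{q}$, so $\fun{r}\in\ideal{I}$. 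Likewise $\bad{r}\in\ideal{I}$, and $\bad{r}$ is upwards closed as a union of upwards closed sets.

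I then check that $\bm{r}\in\iter$. The domain of $\fun{r}$ is $\dom\fun{p}\cap\dom\fun{q}$, an intersection of two sets comeagre in $[\sigma]$, so it is comeagre in $[\sigma]$. For the string-level values, the $m$-th entry of $\fun{r}(\sigma)$ is defined iff both $\fun{p}(\sigma,m)$ and $\fun{q}(\sigma,m)$ converge. Hence $\fun{r}(\sigma)$ has length $\min\{|\fun{p}(\sigma)|,|\fun{q}(\sigma)|\}$, and its entries are maxima of the corresponding entries of $\fun{p}(\sigma)$ and $\fun{q}(\sigma)$. (Here I read the definition of $\fun{r}(\sigma)$ as taking the longest $\tau$ whose entries all converge on $\sigma$, and I would confirm this matches the paper's convention.) Since $\fun{p}(\sigma)\le\tau$ and $\fun{q}(\sigma)\le\tau$, the extended ordering gives $|\tau|\le|\fun{p}(\sigma)|$ and $|\tau|\le|\fun{q}(\sigma)|$, with entrywise domination below $|\tau|$. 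So $|\fun{r}(\sigma)|\ge|\tau|$ and each entry below $|\tau|$ is at most $\tau(m)$. Thus $\fun{r}(\sigma)\le\tau$. Finally, by \cref{prop:closure_operator_for_pairs}(3), $\cl(\bad{r})=\cl(\bad{p})\cup\cl(\bad{q})$, and $(\sigma,\tau)$ lies in neither, so $(\sigma,\tau)\notin\cl(\bad{r})$.

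It remains to show $\bm{r}$ extends both $\bm{p}$ and $\bm{q}$. The first two clauses are immediate: the Cohen and stem parts are equal, and $\bad{r}\supseteq\bad{p},\bad{q}$. For the third clause, take $\sigma'\succeq\sigma$; I need $\fun{r}(\sigma')\ge\fun{p}(\sigma')$ in the extended ordering, that is, $|\fun{r}(\sigma')|\le|\fun{p}(\sigma')|$ and $\fun{r}(\sigma')(m)\ge\fun{p}(\sigma')(m)$ for $m<|\fun{r}(\sigma')|$. The length condition holds because $\fun{r}(\sigma')$ converges only where $\fun{p}(\sigma')$ does, so its length is the minimum. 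The entry condition holds because entries are maxima. The same argument works for $\bm{q}$.

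The only delicate point is this bookkeeping with the extended ordering $\le$ on strings of different lengths, in particular that taking the pointwise maximum shortens the convergent initial segment in the direction the definition needs. Once that is checked as above, the argument is a routine adaptation of the $\hechlerb$ case.
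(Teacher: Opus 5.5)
Your proposal is correct and follows the paper's proof: the common extension is $(\mindom{p},\stm{p},\max\{\fun{p},\fun{q}\},\bad{p}\cup\bad{q})$, with $|\fun{r}(\sigma)|=\min\{|\fun{p}(\sigma)|,|\fun{q}(\sigma)|\}$, and \cref{prop:closure_operator_for_pairs}(3) gives $(\mindom{p},\stm{p})\notin\cl(\bad{p}\cup\bad{q})$. Your explicit check of the length bookkeeping for the extended ordering $\le$, which the paper leaves implicit, is accurate, and your reading of the string convention for $\fun{r}(\sigma)$ matches the paper's.
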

	\begin{proof}
		Suppose that $(\mindom{p}, \stm{p})=(\mindom{q}, \stm{q})$. By \cref{prop:closure_operator_for_pairs}, $(\mindom{p}, \stm{p})\notin \cl(\bad{p} \cup \bad{q})$. Now let $f$ be defined as 
		 \[
		 f(\sigma,i)=\max\{\fun{p}(\sigma,i), \fun{q}(\sigma,i)\}
		 \]
		 for every $\sigma$ and every $i < \min\{|\fun{p}(\sigma)|, |\fun{q}(\sigma)|\}$ (so $|f(\sigma)|=\min\{|\fun{p}(\sigma)|, |\fun{q}(\sigma)|\}$). Then $\bm{s}=(\mindom{p},\stm{p}, f,  \bad{p} \cup \bad{q})$ is a condition extending both $\bm{p}$ and $\bm{q}$.
	\end{proof}
	
	Our next goal is to identify the closed set determined by a condition. This is a bit more tricky, because $\upairs$ is not a tree. The following is an analogue of $T_{\tau,f}$ from the previous section. 

	\begin{definition} \label{def:C_p}
		For a condition $\bm{p}\in \iter$, we let $C_{\bm{p}}$ be the set of pairs $(\sigma,\tau)\in \upairs $ satisfying:
		\begin{itemize}
			\item $(\sigma, \tau)\succeq (\mindom{p}, \stm{p})$; and
			\item  $\tau\ge \fun{p}(\sigma)$. 
		\end{itemize}
		We let 
		\[
			E_{\bm{p}} = C_{\bm{p}}\setminus \cl(\bad{p}). 
		\]
	\end{definition}
	
	Observe that $C_{\bm{p}}$ is computable from~$\bm{p}$, so $C_{\bm{p}}\in \ideal{I}$. Observe also that $(\mindom{p},\stm{p})\in C_{\bm{p}}$. However, $C_{\bm{p}}$ is not closed downwards in $(\upairs,\preceq)$. Rather, if $(\sigma,\tau)\in C_{\bm{p}}$ then $(\sigma',\tau')\in C_{\bm{p}}$ whenever $\sigma'\succeq \sigma$ and $\stm{p}\preceq \tau'\preceq \tau$. 

	\begin{lemma} \label{lem:C_p_is_quite_big}
		For any $\bm{p}\in \iter$, $(\upairs) \setminus C_{\bm{p}}$ is $\omega$-small above any $(\sigma,\tau)\in C_{\bm{p}}$. 
	\end{lemma}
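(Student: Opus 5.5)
Let $D = (\upairs)\setminus C_{\bm{p}}$. The plan is to show by induction on ordinals $\alpha$ that no pair $(\sigma,\tau)\in C_{\bm{p}}$ satisfies $\rk_D(\sigma,\tau)\le\alpha$. By \cref{lem:equivalence_of_rank_and_bigness_witnesses_for_pairs}, this says exactly that $D$ is $\omega$-small above every pair in $C_{\bm{p}}$. (Equivalently, one could show that $D\cup\{\text{pairs not } \succeq(\mindom{p},\stm{p})\}$ is $\omega$-closed and use \cref{lem:characterisation_of_omega_closed_sets_of_pairs}; the rank induction is the same argument.)

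The base case $\alpha=0$ is immediate, since $(\sigma,\tau)\in C_{\bm{p}}$ means $(\sigma,\tau)\notin D$. For the inductive step, suppose $(\sigma,\tau)\in C_{\bm{p}}$ and $\rk_D(\sigma,\tau)\le\alpha$. Then the set
\[
	W=\left\{ \sigma'\succeq\sigma \,:\, (\exists^\infty n)\,\,\rk_D(\sigma',\concat{\tau}{n})<\alpha\right\}
\]
is dense above some $\rho\succeq\sigma$. By the induction hypothesis, every such pair $(\sigma',\concat{\tau}{n})$ with rank below $\alpha$ lies outside $C_{\bm{p}}$. Since $\sigma'\succeq\sigma\succeq\mindom{p}$ and $\concat{\tau}{n}\succeq\stm{p}$, the only way for it to fail to be in $C_{\bm{p}}$ is that $\concat{\tau}{n}\not\ge\fun{p}(\sigma')$.

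To get a contradiction, I would use the comeagreness of $\dom\fun{p}$ in $[\mindom{p}]$: above $\rho$ it is dense to find $\sigma''$ with $|\fun{p}(\sigma'')|\ge|\tau|+1$. Fix such a $\sigma''\succeq\rho$, and by density of $W$ choose $\sigma'\in W$ extending $\sigma''$. Monotonicity of the function $\sigma\mapsto\fun{p}(\sigma)$ along extensions (names being consistent) gives that $\fun{p}(\sigma')$ has length at least $|\tau|+1$. Its first $|\tau|$ entries equal those of $\fun{p}(\sigma)$ wherever the latter is defined. Now I need to check both parts of the ordering $\concat{\tau}{n}\ge\fun{p}(\sigma')$ under the extended notation. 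For the length condition, we need $|\concat{\tau}{n}|\ge|\fun{p}(\sigma')|$, which can fail if $\fun{p}(\sigma')$ is longer than $|\tau|+1$. This suggests the ordering should be read as comparison on the common domain; alternatively, I would choose $\sigma'$ so that the length of $\fun{p}(\sigma')$ is controlled. This is the main obstacle: interpreting $\le$ correctly for strings of differing lengths and handling how $\fun{p}(\sigma')$ compares with $\tau$ on coordinates $<|\tau|$.

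On the coordinates $m<|\tau|$, we need $\tau(m)\ge\fun{p}(\sigma')(m)$. Since $\tau\ge\fun{p}(\sigma)$, this holds where $\fun{p}(\sigma)$ was defined, but $\fun{p}(\sigma')$ may define new coordinates below $|\tau|$ with large values. Here I would use that $\stm{p}\ge\fun{p}(\mindom{p})$ and that membership in $C_{\bm{p}}$ is intended to bound only the coordinates beyond those already fixed. If that fails, I would restrict the claim to pairs with $|\fun{p}(\sigma)|\ge|\tau|$ and first argue densely. Granting agreement on the old coordinates, the new coordinate requires $n\ge\fun{p}(\sigma')(|\tau|)$, which holds for all but finitely many $n$. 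Since infinitely many $n$ witness $\sigma'\in W$, one of them gives a pair in $C_{\bm{p}}$ of rank $<\alpha$, contradicting the induction hypothesis.
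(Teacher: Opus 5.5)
Your strategy is essentially the paper's. The paper fixes a well-founded bigness witness above $(\sigma,\tau)\in C_{\bm{p}}$ and walks from the root to a leaf. At each node it uses comeagreness of $\dom\fun{p}$ to choose $\rho'$ in the dense set with $|\fun{p}(\rho')|>|\nu|$, and then $n>\fun{p}(\rho')(|\nu|)$, so that the last entry stays in $C_{\bm{p}}$. Your rank induction is the same step in different packaging.

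The gap is the step you flag as ``the main obstacle'': you leave it open and then simply grant it. It comes from misreading the extended ordering. In the paper, $\tau\le\rho$ means $|\tau|\ge|\rho|$ and $\tau(m)\le\rho(m)$ for $m<|\rho|$. So $\tau\ge\fun{p}(\sigma)$ means $|\fun{p}(\sigma)|\ge|\tau|$ and $\fun{p}(\sigma)(m)\le\tau(m)$ for all $m<|\tau|$. Two consequences follow:
\begin{enumerate}
\item The length requirement for $(\sigma',\concat{\tau}{n})\in C_{\bm{p}}$ is $|\fun{p}(\sigma')|\ge|\tau|+1$. This is the opposite of what you wrote, and it is exactly what your choice of $\sigma''$ guarantees.
\item No new coordinates of $\fun{p}(\sigma')$ below $|\tau|$ can appear. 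Membership $(\sigma,\tau)\in C_{\bm{p}}$ already gives $|\fun{p}(\sigma)|\ge|\tau|$, so $\fun{p}(\sigma')\succeq\fun{p}(\sigma)$ agrees with $\fun{p}(\sigma)$ on every $m<|\tau|$, and there it is bounded by $\tau$.
\end{enumerate}
With this observation, your inductive step closes exactly as you describe.

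This is not a cosmetic point. Under either reading you entertain (comparison on the common domain, or $|\tau|\ge|\fun{p}(\sigma)|$ plus pointwise domination on $\fun{p}(\sigma)$'s domain), the statement is false. Take, e.g., $\mindom{p}$ and $\stm{p}$ empty, $\bad{p}=\emptyset$, $\fun{p}$ constantly $1$, and let $\tau_0=\stm{p}$.
\begin{itemize}
\item Under your reading, $(\mindom{p},\concat{\tau_0}{0})\in C_{\bm{p}}$.
\item For every $\sigma'$ of length at least $1$ and every $n$, the pair $(\sigma',\concat{\concat{\tau_0}{0}}{n})$ lies outside $C_{\bm{p}}$: at coordinate $0$ it has value $0<1=\fun{p}(\sigma')(0)$.
\item So the complement has rank at most $1$ above $(\mindom{p},\concat{\tau_0}{0})$.
\end{itemize}
Under the paper's reading this pair is simply not in $C_{\bm{p}}$, since $|\fun{p}(\mindom{p})|=0<1$.

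So your fallback, ``restrict the claim to pairs with $|\fun{p}(\sigma)|\ge|\tau|$ and first argue densely'', is not a weakening to be worked around. It is precisely what membership in $C_{\bm{p}}$ says, and it is the fact your proof has to invoke explicitly.
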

	
	\begin{proof}
		Let $(\sigma,\tau)\in C_{\bm{p}}$. Suppose that $R$ is a well-founded bigness witness above $(\sigma,\tau)$. We show that if $s\in R$ is not a leaf of~$R$, and the last entry of~$s$ is in $C_{\bm{p}}$, then~$s$ has an extension in~$R$ whose last entry is also in~$C_{\bm{p}}$. In this way we can keep extending until we get a leaf of~$R$ with last entry in~$C_{\bm{p}}$.

		Let~$s\in R$ be a non-leaf; let $(\rho,\nu)$ be the last entry of~$s$. Let 
		 \[
		 A = \left\{ \rho'\succ\rho \,:\,  (\exists^\infty n)\,\,\concat{s}{\smallseq{(\rho',\concat{\nu}{n})}}\in R \right\};
		 \]
		 so $A$ is dense above some $\rho_0\succeq \rho$. Since $\dom \fun{p}$ is comeagre in~$[\mindom{p}]$, and so in~$[\rho_0]$, we can find some $\rho'\in A$ with $|\fun{p}(\rho')|>|\nu|$; we then take some $n> \fun{p}(\rho')(|\nu|)$ such that $\concat{s}\smallseq{(\rho',\concat{\nu}{n})}\in R$. If $(\rho,\nu)\in C_{\bm{p}}$ then $(\rho',\concat{\nu}{n})\in C_{\bm{p}}$ as well. 
	\end{proof}

	\begin{corollary} \label{cor:C_p_is_very_big}
		Let $\bm{p}\in \iter$, and suppose that $(\sigma,\tau)\in E_{\bm{p}}$. Then for all $A\subseteq \upairs$, if $A$ is $\omega$-big above $(\sigma,\tau)$ then $A\cap E_{\bm{p}}$ is $\omega$-big above $(\sigma,\tau)$.
	\end{corollary}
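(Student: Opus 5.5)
We derive the corollary from \cref{lem:C_p_is_quite_big} and the smallness-preservation property, \cref{lem:concatenation_and_preservation_of_smallness_for_pairs}(2), applied twice. Fix $(\sigma,\tau)\in E_{\bm{p}}$ and suppose that $A$ is $\omega$-big above $(\sigma,\tau)$. First write $A$ as a union of three pieces:
\[
A \subseteq (A\cap E_{\bm{p}}) \cup \bigl((\upairs)\setminus C_{\bm{p}}\bigr) \cup \cl(\bad{p}).
\]
This holds because $E_{\bm{p}} = C_{\bm{p}}\setminus\cl(\bad{p})$. Bigness is monotone: a bigness witness whose leaves lie in $A$ also has its leaves in any superset of $A$. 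So the right-hand side is $\omega$-big above $(\sigma,\tau)$.

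By \cref{lem:concatenation_and_preservation_of_smallness_for_pairs}(2), used twice, at least one of the three pieces is $\omega$-big above $(\sigma,\tau)$. We rule out the last two.

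\begin{itemize}
\item Since $(\sigma,\tau)\in E_{\bm{p}}\subseteq C_{\bm{p}}$, \cref{lem:C_p_is_quite_big} says that $(\upairs)\setminus C_{\bm{p}}$ is $\omega$-small above $(\sigma,\tau)$.
\item Since $(\sigma,\tau)\notin\cl(\bad{p})$, and $\cl(\cl(\bad{p}))=\cl(\bad{p})$ by \cref{prop:closure_operator_for_pairs}, the set $\cl(\bad{p})$ is not $\omega$-big above $(\sigma,\tau)$.
\end{itemize}

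Hence $A\cap E_{\bm{p}}$ is $\omega$-big above $(\sigma,\tau)$, as required.

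There is no real obstacle, since all the substantive work was done in \cref{lem:C_p_is_quite_big}. The only point to watch is the definition of $\omega$-big for sets of pairs: it must be monotone under enlarging the set. This is immediate from the rank definition, since enlarging $B$ can only decrease $\rk_B$, by induction on $\alpha$.
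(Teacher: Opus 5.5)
Your proposal is correct and is essentially the paper's proof: both split $A$ into its parts inside $E_{\bm{p}}$, outside $C_{\bm{p}}$, and inside $\cl(\bad{p})$. Both then use \cref{lem:C_p_is_quite_big}, idempotence of $\cl$, and smallness preservation to conclude. The only difference is cosmetic: the paper writes $A$ as an exact union of its intersections with the three pieces and argues contrapositively, while you enclose $A$ in a superset and appeal to monotonicity of bigness.
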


	\begin{proof}
		The fact that $(\sigma,\tau)\notin \cl(\bad{p})$ implies that $\cl(\bad{p})$ is $\omega$-small above $(\sigma,\tau)$. By \cref{lem:C_p_is_quite_big} and  \cref{prop:closure_operator_for_pairs}, $\cl(\bad{p})\cup (\upairs \setminus C_{\bm{p}})$ is $\omega$-small above $(\sigma,\tau)$. If $A\cap E_{\bm{p}}$ is $\omega$-small above $(\sigma,\tau)$ then 
		\[
			A  = (A\cap \cl{(\bad{p})}) \cup (A\setminus C_{\bm{p}})\cup (A\cap E_{\bm{p}})
		\]
		is $\omega$-small above $(\sigma,\tau)$. 
	\end{proof}

	\begin{lemma} \label{lem:iterated_forcing:characterisation_of_C_p}
		Let $\bm{p}\in \iter$. The following are equivalent for $(\sigma,\tau)\in \upairs$: 
		\begin{enumerate}
			\item $(\sigma,\tau,\fun{p},\bad{p})\in \iter$ and extends $\bm{p}$.
			\item There is some extension $\bm{q}\le \bm{p}$ such that $(\sigma,\tau) = (\mindom{q},\stm{q})$. 
			\item $(\sigma,\tau)\in E_{\bm{p}}$. 
		\end{enumerate}
	\end{lemma}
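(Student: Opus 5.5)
We prove the cycle (1)$\Rightarrow$(2)$\Rightarrow$(3)$\Rightarrow$(1), following the pattern of \cref{lem:Hechler_with_bad_sets:possible_finite_extensions}. The step (1)$\Rightarrow$(2) is immediate: take $\bm{q} = (\sigma,\tau,\fun{p},\bad{p})$.

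For (2)$\Rightarrow$(3), let $\bm{q}\le\bm{p}$ with $(\mindom{q},\stm{q}) = (\sigma,\tau)$. By the definition of extension, $\sigma\succeq\mindom{p}$ and $\tau\succeq\stm{p}$, so $(\sigma,\tau)\succeq(\mindom{p},\stm{p})$. Since $\bm{q}$ is a condition, $\fun{q}(\sigma)\le\tau$. Since $\bm{q}$ extends $\bm{p}$ and $\sigma\succeq\mindom{p}$, we also have $\fun{q}(\sigma)\ge\fun{p}(\sigma)$.

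We must chain these with care, because of the extended meaning of $\le$ on strings of different lengths. The relation $\fun{q}(\sigma)\ge\fun{p}(\sigma)$ means that $|\fun{q}(\sigma)|\le|\fun{p}(\sigma)|$ and that the entries agree in the stated direction on the shorter length. Transitivity of this partial order (asserted in the notation paragraph) then gives $\tau\ge\fun{p}(\sigma)$, so $(\sigma,\tau)\in C_{\bm{p}}$.

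For the bad set, $\bad{p}\subseteq\bad{q}$, and $\cl$ is monotone: a bigness witness for $\bad{p}$ is also one for $\bad{q}$, or one can use \cref{prop:closure_operator_for_pairs}(3). Hence $\cl(\bad{p})\subseteq\cl(\bad{q})$. Since $(\sigma,\tau)\notin\cl(\bad{q})$, we get $(\sigma,\tau)\notin\cl(\bad{p})$, and therefore $(\sigma,\tau)\in E_{\bm{p}}$.

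For (3)$\Rightarrow$(1), let $(\sigma,\tau)\in E_{\bm{p}}$ and check the clauses for $\bm{r} = (\sigma,\tau,\fun{p},\bad{p})$.
\begin{itemize}
\item $\fun{p}\in\ideal{I}$, and its domain is comeagre in $[\mindom{p}]$, hence in $[\sigma]\subseteq[\mindom{p}]$.
\item $\fun{p}(\sigma)\le\tau$ is exactly the membership $(\sigma,\tau)\in C_{\bm{p}}$.
\item $\bad{p}$ is unchanged, and $(\sigma,\tau)\notin\cl(\bad{p})$ by the definition of $E_{\bm{p}}$.
\end{itemize}
So $\bm{r}$ is a condition. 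It extends $\bm{p}$: $\sigma\succeq\mindom{p}$ and $\tau\succeq\stm{p}$, the bad sets are equal, and the function is the same, so $\fun{r}(\sigma')\ge\fun{p}(\sigma')$ holds trivially for all $\sigma'$.

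The only delicate point is the transitivity bookkeeping in (2)$\Rightarrow$(3). One must confirm that the extended order on $\omega^{<\omega}$ really is transitive with the length conventions in the correct direction. Here, $\rho\le\tau$ with $\tau$ longer forces agreement only on the initial $|\rho|$ entries in the specified way.
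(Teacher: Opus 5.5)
Your proof is correct and follows the paper's argument. The paper writes out only (2)$\Rightarrow$(3), using the same chain $\fun{p}(\mindom{q})\le\fun{q}(\mindom{q})\le\stm{q}$ and the monotonicity $\cl(\bad{p})\subseteq\cl(\bad{q})$; the other implications hold by definition, exactly as you check them.

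Your closing remark has the length convention backwards. Under the paper's convention, $\rho\le\tau$ forces $|\rho|\ge|\tau|$, with comparison on the first $|\tau|$ entries. You do use the correct direction in the body of your argument, so this slip does not affect the proof.
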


	\begin{proof}
		(2)$\Rightarrow$(3): suppose that $\bm{q}\le \bm{p}$. As in the proof of \cref{lem:Hechler_with_bad_sets:possible_finite_extensions}, because $\bad{p}\subseteq\bad{q}$ we get $(\mindom{q},\stm{q})\notin \cl(\bad p)$. Also, $\mindom{q}\succeq \mindom{p}$ so $\fun{p}(\mindom{q})\le \fun{q}(\mindom{q}) \le \stm{q}$, so $(\mindom{q},\stm{q})\in C_{\bm{p}}$. 
	\end{proof}

\begin{lemma} \label{lem:iterated_forcing:density_of_long_stems}
	For all~$m$, the collection of $\bm{p}\in \iter$ for which $|\mindom{p}|,|\stm{p}|\ge m$ is dense in~$\iter$. 
\end{lemma}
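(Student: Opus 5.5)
By \cref{lem:iterated_forcing:characterisation_of_C_p}, it suffices, given $\bm{p}\in\iter$ and $m$, to find a pair $(\sigma,\tau)\in E_{\bm{p}}$ with $|\sigma|,|\tau|\ge m$. Then $(\sigma,\tau,\fun{p},\bad{p})$ is a condition extending $\bm{p}$ with the required long stems. We will find such a pair through an $\omega$-bigness argument, imitating \cref{lem:Hechler_with_bad:existence_of_long_extensions}.

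Let $L_m$ be the set of pairs $(\sigma,\tau)\in\upairs$ with $|\sigma|\ge m$ and $|\tau|\ge m$. The first step is to show that $L_m$ is $\omega$-big above $(\mindom{p},\stm{p})$. We will build an explicit bigness witness $R$ that is well-founded of height $k=\max(0,m-|\stm{p}|)$ plus one extra step.

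\begin{itemize}
\item The root is $\smallseq{(\mindom{p},\stm{p})}$.
\item A non-leaf node $s$ with last entry $(\rho,\nu)$ gets as children all $\concat{s}{\smallseq{(\rho',\concat{\nu}{n})}}$ with $\rho'\succ\rho$ and $n\in\omega$ arbitrary. The set of such $\rho'$ is all proper extensions of $\rho$, which is dense above $\rho$, so the bigness condition holds.
\item Every entry strictly lengthens both coordinates, so after $m$ steps both lengths are at least $m$.
\end{itemize}

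We declare nodes of length $m+1$ to be leaves; their last entries lie in $L_m$. By \cref{lem:equivalence_of_rank_and_bigness_witnesses_for_pairs}, $L_m$ is $\omega$-big above $(\mindom{p},\stm{p})$.

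Next, the pair $(\mindom{p},\stm{p})$ lies in $C_{\bm{p}}$ and is not in $\cl(\bad{p})$, so it lies in $E_{\bm{p}}$. By \cref{cor:C_p_is_very_big}, $L_m\cap E_{\bm{p}}$ is $\omega$-big above $(\mindom{p},\stm{p})$. In particular it is nonempty, since a well-founded bigness witness has a leaf. Pick $(\sigma,\tau)$ in this intersection and take the extension described above.

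The only mildly delicate point is the claim that $(\sigma,\tau,\fun{p},\bad{p})$ is a genuine condition. This is already encoded in (3)$\Rightarrow$(1) of \cref{lem:iterated_forcing:characterisation_of_C_p}. We check it directly:
\begin{itemize}
\item $\dom\fun{p}$ is comeagre in $[\sigma]$, since $\sigma\succeq\mindom{p}$.
\item $\fun{p}(\sigma)\le\tau$ holds by membership in $C_{\bm{p}}$.
\item $(\sigma,\tau)\notin\cl(\bad{p})$.
\end{itemize}
The extension clauses hold trivially because the function and the bad set are unchanged. All the real work, namely handling the requirement $\tau\ge\fun{p}(\sigma)$, is absorbed into \cref{lem:C_p_is_quite_big}.
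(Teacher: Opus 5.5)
Your proof is correct and is the paper's argument: reduce via \cref{lem:iterated_forcing:characterisation_of_C_p} to finding a pair in $E_{\bm{p}}$ with both coordinates of length at least $m$, note that the set of such pairs is $\omega$-big above $(\mindom{p},\stm{p})$, and apply \cref{cor:C_p_is_very_big} to get a nonempty intersection with $E_{\bm{p}}$. The only difference is that you write out the bigness witness, which the paper takes for granted. One small slip: your aside about height $\max(0,m-|\stm{p}|)$ plus one step does not match the tree you actually use, and would not guarantee $|\sigma|\ge m$; the tree with leaves at sequence-length $m+1$ is the correct one.
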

	
\begin{proof}
	Let $\bm{p}\in \iter$. By \cref{lem:iterated_forcing:characterisation_of_C_p}, we need to show that there is a pair $(\sigma,\tau)\in E_{\bm{p}}$ with $|\sigma|,|\tau|\ge m$. Let $Q_m$ be the collection of $(\sigma,\tau)\succeq(\mindom{p},\stm{p})$ such that $|\sigma|,|\tau|\ge m$. Then~$Q_m$ is $\omega$-big above $(\mindom{p},\stm{p})$. By \cref{cor:C_p_is_very_big}, the collection of desirable $(\sigma,\tau)$ is $\omega$-big above $(\mindom{p},\stm{p})$, and so is nonempty. 
\end{proof}

For a filter $G\subset \iter$ we let
\[
	x_G = \bigcup \left\{ \mindom{p} \,:\,  p\in \iter \right\}
\]
and
\[
	y_G = \bigcup \left\{ \stm{p} \,:\,  p\in \iter \right\}.
\]

\Cref{lem:iterated_forcing:density_of_long_stems} implies that if $G$ is sufficiently generic, then 
\[
	(x_G,y_G)\in \cantor\times \baire. 
\]

For $\bm{p}\in \iter$ we let $[\bm{p}]$ be the collection of $(x,y)\in \cantor\times \baire$ such that: 
\begin{itemize}
	\item $\mindom{p}\prec x$ and $\stm{p}\prec y$;
	\item For all $m \geq \stm{p}$, $\fun{p}(x,m)\le y(m)$; and
	\item $(x,y)\notin \open{\bad{p}}$. 
\end{itemize}

This is a closed subset of $\cantor\times \baire$. 

\begin{lemma} \label{lem:iterated_forcing:strong_forcing_implies_forcing}
	For all $\bm{p}\in \iter$, $\bm{p}\force_{\iter} (x_G,y_G)\in [\bm{p}]$. 
\end{lemma}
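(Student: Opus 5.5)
The plan is to follow the argument of \cref{lem:avoidbad} (and its Hechler analogue \cref{lem:Hechler_with_bad:strong_forcing_implies_forcing}), checking each of the three clauses defining $[\bm{p}]$ separately. Let $G\subset\iter$ be sufficiently generic with $\bm{p}\in G$. By \cref{lem:iterated_forcing:density_of_long_stems}, $(x_G,y_G)\in\cantor\times\baire$; and since $G$ is a filter containing $\bm{p}$, every $\bm{q}\in G$ is compatible with $\bm{p}$. So for any finite amount of information we can pick a common extension $\bm{q}\in G$ of $\bm{p}$ whose stems are long enough. Then $\mindom{p}\preceq\mindom{q}\prec x_G$ and $\stm{p}\preceq\stm{q}\prec y_G$, which gives the first clause.

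\textbf{Third clause.} Suppose $(\sigma,\tau)\in\bad{p}$ with $\sigma\prec x_G$ and $\tau\prec y_G$. Pick $\bm{q}\in G$, $\bm{q}\le\bm{p}$, with $(\sigma,\tau)\preceq(\mindom{q},\stm{q})$. Because $\bad{p}$ is upwards closed, $(\mindom{q},\stm{q})\in\bad{p}\subseteq\bad{q}\subseteq\cl(\bad{q})$. This contradicts the requirement $(\mindom{q},\stm{q})\notin\cl(\bad{q})$ in the definition of $\iter$.

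\textbf{Second clause.} I read the clause as ``for all $m\ge|\stm{p}|$''. Suppose $\fun{p}(x_G,m)\converge$. Choose $\bm{q}\in G$, $\bm{q}\le\bm{p}$, such that $|\stm{q}|>m$ and $\fun{p}(\mindom{q},m)\converge$; this is possible because $\mindom{q}$ can be taken long enough to exhibit the convergence along $x_G$. By \cref{lem:iterated_forcing:characterisation_of_C_p}, $(\mindom{q},\stm{q})\in E_{\bm{p}}\subseteq C_{\bm{p}}$, so $\stm{q}\ge\fun{p}(\mindom{q})$. That inequality covers the positions $m<|\fun{p}(\mindom{q})|$, and $m$ is such a position provided $|\mindom{q}|$ is also large enough that $|\fun{p}(\mindom{q})|>m$; recall that $\fun{p}(\sigma)$ has length at most $|\sigma|$ and is the longest convergent initial segment. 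Hence $y_G(m)=\stm{q}(m)\ge\fun{p}(x_G,m)$.

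\textbf{Main obstacle.} The delicate step is the second clause. One must make sure that the $\bm{q}$ chosen has both $\mindom{q}$ long enough for $\fun{p}(\mindom{q})$ to reach position $m$, and $\stm{q}$ covering position $m$. The issue is that the ordering $\le$ on strings of different lengths compares only up to the shorter length. The fix is to combine the density of long stems with genericity. The set of conditions $\bm{q}\le\bm{p}$ that either have $\mindom{q}$ incompatible with $\dom\fun{p}$ at $m$ in the relevant sense, or have $|\fun{p}(\mindom{q})|>m$ and $|\stm{q}|>m$, is dense below $\bm{p}$, since $\dom\fun{p}$ is comeagre in $[\mindom{p}]$. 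A sufficiently generic $G$ meets this set. If $\fun{p}(x_G)$ were partial, the clause would be interpreted only where it is defined, and in any case totality on a comeagre set makes it total for generic $x_G$.
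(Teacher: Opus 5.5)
Your proof is correct and is essentially the paper's argument, spelled out clause by clause: you pick $\bm{q}\in G$ below $\bm{p}$ whose stems extend the relevant initial segments, and use $(\mindom{q},\stm{q})\in E_{\bm{p}} = C_{\bm{p}}\setminus\cl(\bad{p})$ together with the upward closure of $\bad{p}$. One small simplification: under the paper's convention, $\stm{q}\ge\fun{p}(\mindom{q})$ already forces $|\fun{p}(\mindom{q})|\ge|\stm{q}|$ and compares every position below $|\stm{q}|$, so requiring $|\stm{q}|>m$ is enough, and the worry in your ``main obstacle'' paragraph about the length of $\fun{p}(\mindom{q})$ is unnecessary.
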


\begin{proof}
	Suppose that $\bm{p}\in G$ and~$G$ is sufficiently generic. Let $\sigma\prec x_G$ and $\tau\prec y_G$. There is some $\bm{q}\in G$ extending~$\bm{p}$ such that $(\sigma,\tau)\preceq (\mindom{q},\stm{q})$; and $(\mindom{q},\stm{q})\in C_{\bm{p}}\setminus \bad{p}$. 
\end{proof}

Once again, we say that $\bm{p} \in \iter$ strongly forces $\varphi$ if $\varphi(x)$ holds for every $x \in [\bm{p}]$.

The proof of \cref{lem:Hechler:forcing_open} (using \cref{cor:C_p_is_very_big}) gives:

\begin{lemma} \label{lem:iterated:forcing_open}
	Let $A\in \ideal{I}$ be a name of an open subset of $\cantor\times \baire$, and let $\bm{p}\in \iter$. 
	\begin{enumerate}
		\item If $E_{\bm{p}}\subseteq \cl(A)$ then $\bm{p}\force (x_G,y_G)\in \open{A}$. 
		\item If $E_{\bm{p}}\nsubseteq \cl(A)$ then~$\bm{p}$ has an extension which strongly forces that $(x_G,y_G)\notin \open{A}$. 
	\end{enumerate}
\end{lemma}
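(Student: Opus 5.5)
We follow the proof of \cref{lem:Hechler:forcing_open}, replacing strings by pairs in $\upairs$ and using \cref{cor:C_p_is_very_big} in place of the observation that $T_{\tau,f}$ is a Hechler tree. We may assume $A$ is upwards closed in $(\upairs,\preceq)$, so that $(x,y)\in\open{A}$ exactly when some pair of initial segments $(\sigma,\tau)\prec(x,y)$ lies in~$A$.

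For~(1), suppose $E_{\bm{p}}\subseteq\cl(A)$ and let $\bm{q}\le\bm{p}$. By \cref{lem:iterated_forcing:characterisation_of_C_p}, $(\mindom{q},\stm{q})\in E_{\bm{p}}$, so $A$ is $\omega$-big above $(\mindom{q},\stm{q})$. Since $(\mindom{q},\stm{q})\in E_{\bm{q}}$, \cref{cor:C_p_is_very_big} applied to~$\bm{q}$ shows that $A\cap E_{\bm{q}}$ is $\omega$-big above this pair, and in particular is nonempty. Pick $(\sigma,\tau)\in A\cap E_{\bm{q}}$. By \cref{lem:iterated_forcing:characterisation_of_C_p}, $\bm{r}=(\sigma,\tau,\fun{q},\bad{q})$ is a condition extending~$\bm{q}$. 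By \cref{lem:iterated_forcing:strong_forcing_implies_forcing}, $\bm{r}$ forces $(\sigma,\tau)\prec(x_G,y_G)$, and by upward closure of~$A$ it therefore forces (indeed strongly forces) $(x_G,y_G)\in\open{A}$. Hence the conditions forcing membership are dense below~$\bm{p}$, so $\bm{p}$ forces it.

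For~(2), pick $(\sigma,\tau)\in E_{\bm{p}}\setminus\cl(A)$ and set $\bm{r}=(\sigma,\tau,\fun{p},\bad{p}\cup A)$. We check that $\bm{r}$ is a condition.
\begin{itemize}
\item $\bad{p}\cup A$ is upwards closed and lies in~$\ideal{I}$.
\item By \cref{prop:closure_operator_for_pairs}(3), $\cl(\bad{p}\cup A)=\cl(\bad{p})\cup\cl(A)$, and $(\sigma,\tau)$ lies in neither set.
\item $\dom\fun{p}$ is comeagre in $[\sigma]$ because $\sigma\succeq\mindom{p}$.
\item $\fun{p}(\sigma)\le\tau$ because $(\sigma,\tau)\in C_{\bm{p}}$.
\end{itemize}
Since the $\fun{}$ component is unchanged and the bad set only grew, $\bm{r}$ extends~$\bm{p}$. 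Every $(x,y)\in[\bm{r}]$ avoids $\open{\bad{r}}\supseteq\open{A}$, so $\bm{r}$ strongly forces $(x_G,y_G)\notin\open{A}$.

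The only step that is not routine is the nonemptiness of $A\cap E_{\bm{q}}$ in~(1), which is exactly \cref{cor:C_p_is_very_big}. We also need the basic fact that an $\omega$-big set above a pair is nonempty, since a well-founded bigness witness has a leaf.
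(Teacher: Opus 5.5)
Your proof is correct and follows the paper's own route: the argument of \cref{lem:Hechler:forcing_open} transplanted to pairs, with \cref{cor:C_p_is_very_big} giving $A\cap E_{\bm{q}}\neq\emptyset$ in~(1) and \cref{prop:closure_operator_for_pairs} making $(\sigma,\tau,\fun{p},\bad{p}\cup A)$ a condition in~(2). One rephrasing is needed: upward closure of~$A$ is not a ``we may assume'' here, because passing to the upward closure can enlarge $\cl(A)$, which occurs in the hypotheses (indeed, for $A=\{(\mindom{p},\stm{p})\}$ part~(2) is false); read it instead as the standing convention on names, which the paper also uses implicitly and which is needed only in~(2), to make $\bad{p}\cup A$ upward closed.
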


\subsection{The generic is strongly meagre engulfing}

\begin{lemma} \label{lem:Cohen_sits_inside_I}
	If $D\subseteq 2^{<\omega}$ is dense, then 
	\[
		\left\{ \bm{p}\in \iter  \,:\,  \mindom{p}\in D \right\}
	\]
	is dense in~$\iter$. 
\end{lemma}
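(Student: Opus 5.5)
Given $\bm{p}\in\iter$ and a dense $D\subseteq 2^{<\omega}$, the aim is a pair $(\sigma,\tau)\in E_{\bm{p}}$ with $\sigma\in D$. By \cref{lem:iterated_forcing:characterisation_of_C_p}, $(\sigma,\tau,\fun{p},\bad{p})$ is then a condition extending $\bm{p}$ whose Cohen part lies in~$D$, which proves the lemma. It is worth noting that $D$ need not be in~$\ideal{I}$: the conclusion is about existence only, so no definability is needed. It would also suffice for $D$ to be dense above $\mindom{p}$.

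Let $A=\{(\sigma,\tau)\in\upairs \,:\, \sigma\in D\}$. By \cref{cor:C_p_is_very_big} applied at $(\mindom{p},\stm{p})\in E_{\bm{p}}$, it is enough to show that $A$ is $\omega$-big above $(\mindom{p},\stm{p})$. Then $A\cap E_{\bm{p}}$ is $\omega$-big there, hence nonempty, and any element of it is the required pair.

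To show that $A$ is $\omega$-big, compute ranks: I claim $\rk_A(\mindom{p},\stm{p})\le 1$. Consider
\[
\left\{\sigma'\succeq\mindom{p} \,:\, (\exists^\infty n)\,\,\rk_A(\sigma',\concat{\stm{p}}{n})<1\right\}.
\]
For every $\sigma'\in D$ and every $n$ we have $(\sigma',\concat{\stm{p}}{n})\in A$, so its $A$-rank is $0$. Therefore this set contains $\{\sigma'\in D : \sigma'\succeq\mindom{p}\}$. That set is dense above $\mindom{p}$ because $D$ is dense, so it is somewhere dense above $\mindom{p}$. Equivalently, the witness is the bigness witness of height one whose children are $\smallseq{(\mindom{p},\stm{p}),(\sigma',\concat{\stm{p}}{n})}$ for $\sigma'\in D$ with $\sigma'\succ\mindom{p}$ and all~$n$.

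One small point needs checking: a bigness witness requires the strict extension $\sigma'\succ\mindom{p}$. This is harmless, since $D$ is dense above every proper extension of $\mindom{p}$. Likewise, the rank definition asks for $\sigma'\succeq\sigma$, so nothing is lost either way.

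There is no serious obstacle here; all the real work, namely that $C_{\bm{p}}$ and the complement of $\cl(\bad{p})$ are large, has already been done in \cref{lem:C_p_is_quite_big} and \cref{cor:C_p_is_very_big}.
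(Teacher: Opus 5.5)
Your proposal is correct and follows essentially the same route as the paper: you show $D\times\omega^{<\omega}$ is $\omega$-big above $(\mindom{p},\stm{p})$, apply \cref{cor:C_p_is_very_big} to get a pair in $E_{\bm{p}}$ with first coordinate in $D$, and finish with \cref{lem:iterated_forcing:characterisation_of_C_p}. The only difference is that you spell out the rank-$\le 1$ verification of bigness, which the paper leaves implicit.
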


\begin{proof}
	Let $\bm{p}\in \iter$. The set $D\times \omega^{<\omega}$ is $\omega$-big above $(\mindom{p},\stm{p})$. By \cref{cor:C_p_is_very_big}, there is some $(\sigma,\tau)\in D\times \omega^{<\omega}$  in $E_{\bm{p}}$; apply \cref{lem:iterated_forcing:characterisation_of_C_p}. 
\end{proof}

\begin{corollary} \label{cor:iterated:first_coordinate_is_Cohen}
	If $G\subset \iter$ is sufficiently generic, then $x_G$ is $\ideal{I}$-Cohen. 
\end{corollary}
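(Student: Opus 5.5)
The statement to prove is \cref{cor:iterated:first_coordinate_is_Cohen}: for sufficiently generic $G\subset\iter$, the real $x_G$ meets every dense open set with a name in~$\ideal{I}$. The plan is to read this off directly from \cref{lem:Cohen_sits_inside_I}.

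First, fix a dense open set $\mathcal{U}\subseteq\cantor$ with a name $U\in\ideal{I}$. Let $D\subseteq 2^{<\omega}$ be the upward closure of~$U$. Then $D$ is still in~$\ideal{I}$, is dense (every $\sigma\in 2^{<\omega}$ has an extension in~$D$, because $\open{U}$ is dense), and satisfies $\open{D}=\mathcal{U}$.

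By \cref{lem:Cohen_sits_inside_I}, the set $\mathcal{D}_U=\{\bm{p}\in\iter \,:\, \mindom{p}\in D\}$ is dense in~$\iter$. Since $\ideal{I}$ is countable, there are only countably many such names~$U$. So "sufficiently generic" can be taken to include meeting every $\mathcal{D}_U$, together with the dense sets of \cref{lem:iterated_forcing:density_of_long_stems}, which make $x_G\in\cantor$ total.

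Now take $\bm{p}\in G\cap\mathcal{D}_U$. For any $\bm{q}\in G$, compatibility within the filter gives a common extension $\bm{r}\in G$ of $\bm{p}$ and $\bm{q}$. Then $\mindom{r}$ extends both $\mindom{p}$ and $\mindom{q}$, so the Cohen parts of conditions in~$G$ are pairwise comparable. Hence $x_G=\bigcup\{\mindom{q}\,:\,\bm{q}\in G\}$ extends $\mindom{p}\in D$, and so $x_G\in\open{D}=\mathcal{U}$.

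An $\ideal{I}$-meagre set is a countable union of nowhere dense closed sets with names in~$\ideal{I}$, and the complements of these are dense open sets with names in~$\ideal{I}$ (computable from the names of the closed sets). Since $x_G$ lies in every such dense open set, it avoids every $\ideal{I}$-meagre set, so $x_G$ is $\ideal{I}$-Cohen.

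The only substantive work is in \cref{lem:Cohen_sits_inside_I}, which is already available; the rest is bookkeeping. The one point to be careful about is that the definition of $x_G$ in the paper ranges over conditions of~$G$, and that the Cohen parts of conditions in~$G$ are compatible, which is exactly what the filter property gives.
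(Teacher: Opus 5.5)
Your proof is correct and is the argument the paper intends: the corollary is stated with no separate proof, as an immediate consequence of \cref{lem:Cohen_sits_inside_I} applied to the countably many dense sets of strings coming from names in $\ideal{I}$ of dense open sets. Passing to the upward closure of a name, so that $D$ has an extension of every string as the lemma requires, and checking that the Cohen parts of conditions in $G$ are comparable, is exactly the bookkeeping the paper leaves implicit.
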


\begin{lemma} \label{lem:iterated_forcing:the_Hechler_part_is_total}
	Every $\bm{p}\in \iter$ forces that $\fun{p}(x_G)$ is total, and that $y_G$ dominates $\fun{p}(x_G)$. 
\end{lemma}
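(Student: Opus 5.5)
The statement has two parts: $\fun{p}(x_G)$ is total, and $y_G$ dominates $\fun{p}(x_G)$. I will prove both by density arguments below $\bm{p}$, using \cref{lem:iterated_forcing:characterisation_of_C_p} to pass from pairs in $E_{\bm{q}}$ to extensions.

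\emph{Totality.} Fix $n$ and $\bm{q}\le\bm{p}$. I claim $\bm{q}$ has an extension $\bm{r}$ with $|\fun{p}(\mindom{r})|\ge n$; since $\fun{p}(\mindom{r})$ is then an initial segment of $\fun{p}(x)$ for every $x\succeq\mindom{r}$, this suffices. Let
\[
	D=\{\sigma\succeq\mindom{q} \,:\, |\fun{p}(\sigma)|\ge n\}.
\]
Since $\mindom{q}\succeq\mindom{p}$ and $\dom\fun{p}$ is comeagre in $[\mindom{p}]$, $D$ is dense above $\mindom{q}$. Then $D\times\omega^{<\omega}$ lies in $\cl$ of itself at rank $0$, so it is $\omega$-big above $(\mindom{q},\stm{q})$. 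By \cref{cor:C_p_is_very_big} (applied to $\bm{q}$, with $(\mindom{q},\stm{q})\in E_{\bm{q}}$), some $(\sigma,\tau)\in E_{\bm{q}}$ has $\sigma\in D$. By \cref{lem:iterated_forcing:characterisation_of_C_p}, $(\sigma,\tau,\fun{q},\bad{q})$ extends $\bm{q}$, which proves the claim.

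\emph{Domination.} Let $G\ni\bm{p}$ be sufficiently generic. By \cref{lem:iterated_forcing:strong_forcing_implies_forcing}, $(x_G,y_G)\in[\bm{p}]$, so by the definition of $[\bm{p}]$, $\fun{p}(x_G,m)\le y_G(m)$ for all $m\ge|\stm{p}|$. Hence $y_G$ dominates $\fun{p}(x_G)$, once we know the latter is total. If instead one wants to avoid relying on that clause of $[\bm{p}]$, argue directly: for $\bm{q}\le\bm{p}$ in $G$, we have $(\mindom{q},\stm{q})\in C_{\bm{p}}$ (as in the proof of \cref{lem:iterated_forcing:characterisation_of_C_p}), so $\stm{q}\ge\fun{p}(\mindom{q})$. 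Here the relation $\ge$ in the extended sense says: for $m<|\stm{q}|$ with $\fun{p}(\mindom{q},m)$ defined, $\stm{q}(m)\ge\fun{p}(\mindom{q},m)$. Taking such $\bm{q}$ with long stems, which exist by genericity and \cref{lem:iterated_forcing:density_of_long_stems}, gives the inequality at each $m$.

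The main subtlety is the totality step, where comeagreness of the domain has to be transferred into the two-coordinate bigness notion. The product set $D\times\omega^{<\omega}$ handles this exactly as in \cref{lem:Cohen_sits_inside_I}. Beyond that, the only check is that the extended order $\le$ on strings of different lengths behaves as used.
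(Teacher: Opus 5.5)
Your proof is correct and is essentially the paper's: totality comes from the density argument of \cref{lem:Cohen_sits_inside_I}, which you inline using a set $D$ that is only dense above $\mindom{q}$, and domination is read off from \cref{lem:iterated_forcing:strong_forcing_implies_forcing}. One wording slip: $D\times\omega^{<\omega}$ is $\omega$-big above $(\mindom{q},\stm{q})$ because that pair has rank at most $1$ (since $D$ is dense above $\mindom{q}$ and every pair with first coordinate in $D$ has rank $0$); the pair $(\mindom{q},\stm{q})$ itself need not have rank $0$.
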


\begin{proof}
	For all~$m$, the collection $A_m$ of $\sigma\in 2^{<\omega}$ with $|\fun{p}(\sigma)|\ge m$ is dense above $\mindom{p}$. By \cref{lem:Cohen_sits_inside_I}, densely below~$\bm{p}$ we can find conditions~$\bm{q}$ with $\mindom{q}\in A_m$. This shows that~$\bm{p}$ forces that $\fun{p}(x_G)$ is total. By \cref{lem:iterated_forcing:strong_forcing_implies_forcing}, $\bm{p}$ forces that for all $m\ge |\stm{p}|$ we have $y_G(m)\ge \fun{p}(x_G,m)$. 
\end{proof}

\begin{lemma} \label{lem:iterated_forcing:domination}
	If $G\subset \iter$ is sufficiently generic, then $y_G$ is $\ideal{I}[x_G]$-dominating. 
\end{lemma}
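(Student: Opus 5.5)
Every $g\in\ideal{I}[x_G]$ has the form $g=\Phi(x_G)$ for some $\ideal{I}$-computable partial continuous $\Phi$ from $\cantor$ to $\baire$ (only the Cohen coordinate is used). So it suffices to show the following. For every such $\Phi$, the set of conditions that either force $\Phi(x_G)$ non-total or force $y_G$ to dominate $\Phi(x_G)$ is dense in $\iter$. By \cref{lem:iterated_forcing:the_Hechler_part_is_total}, for the second alternative it is enough to produce $\bm{q}\le\bm{p}$ with $\fun{q}\ge\Phi$ pointwise on $\dom\Phi\cap[\mindom{q}]$ (beyond $|\stm{q}|$). Then $\bm{q}$ forces that $y_G$ dominates $\fun{q}(x_G)$, which dominates $\Phi(x_G)$.

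Fix $\bm{p}\in\iter$. First case: $\dom\Phi$ is not comeagre in $[\mindom{p}]$. Then there are $\rho\succeq\mindom{p}$ and $n$ such that no $\rho'\succeq\rho$ has $|\Phi(\rho')|\ge n$. I take the set $D$ of strings which either have no extension with $|\Phi(\cdot)|\ge n$, or are incomparable with $\mindom{p}$, or which extend $\mindom{p}$ with $\dom\Phi$ comeagre below them. This set is dense. By \cref{lem:Cohen_sits_inside_I} I extend to $\bm{q}$ with $\mindom{q}\in D$. If $\mindom{q}$ lies in the first part of $D$, then $\bm{q}$ forces $\Phi(x_G)$ non-total, since $x_G$ extends $\mindom{q}$. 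Otherwise $\dom\Phi$ is comeagre in $[\mindom{q}]$, and I reduce to the second case with $\bm{q}$ in place of $\bm{p}$. (Alternatively, I could just prove density below any $\bm{p}$ whose $\mindom{p}$ already decides comeagreness, and use genericity.)

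Second case: $\dom\Phi$ is comeagre in $[\mindom{p}]$. I define $f(\sigma,i)=\max\{\fun{p}(\sigma,i),\Phi(\sigma,i)\}$ for $i<\min\{|\fun{p}(\sigma)|,|\Phi(\sigma)|\}$, exactly as in the $\sigma$-centredness lemma for $\iter$. Then $f\in\ideal{I}$ and $\dom f\supseteq\dom\fun{p}\cap\dom\Phi$, which is comeagre in $[\mindom{p}]$, and $f(\sigma)\ge\fun{p}(\sigma)$. The remaining obstacle is the clause $f(\mindom{q})\le\stm{q}$, since $\Phi(\mindom{p})$ may exceed $\stm{p}$. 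I handle this by cutting the Cohen part back in length rather than moving the stem. I redefine $f$ so that it agrees with $\fun{p}$ on the first $|\stm{p}|$ coordinates, taking the max with $\Phi$ only at coordinates $i\ge|\stm{p}|$. The domination is only required eventually, so nothing is lost. By the convention $|f(\sigma)|\le|\sigma|$ together with the ordering on $\omega^{<\omega}$ (which compares only up to $|\stm{p}|$ when $f(\mindom{p})$ is longer), we get $f(\mindom{p})\le\stm{p}$. If lengths cause trouble, I instead restrict $f$ so that it outputs nothing beyond what $\fun{p}$ outputs on strings of length $\le|\mindom{p}|$.

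Then $\bm{q}=(\mindom{p},\stm{p},f,\bad{p})$ is a condition extending $\bm{p}$, and the bad-set requirement is unchanged. For every $x\in\dom\Phi\cap[\mindom{p}]$ and $m\ge|\stm{p}|$ we have $f(x,m)\ge\Phi(x,m)$. Combining with \cref{lem:iterated_forcing:the_Hechler_part_is_total}, $\bm{q}$ forces $y_G\ge^*\Phi(x_G)$. The main point needing care is the bookkeeping in the redefinition of $f$ that preserves $f(\mindom{p})\le\stm{p}$; everything else is routine genericity.
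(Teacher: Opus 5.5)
Your proposal is correct and is essentially the paper's proof: first decide totality of $\Phi(x_G)$ through the Cohen coordinate, then replace the name by one that dominates $\Phi$ past the stem. For the first step, your parenthetical alternative is exactly the paper's route and is the one to use, since your set $D$ with a single fixed $n$ need not be dense (let $n$ vary).

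The only other difference is bookkeeping. You freeze the first $|\stm{p}|$ coordinates to $\fun{p}$, which already gives $f(\mindom{p})\le\stm{p}$ because $|\fun{p}(\mindom{p})|\ge|\stm{p}|$. The paper instead freezes them to $\stm{q}$, and so must first extend to get $|\Phi(\mindom{q})|\ge|\stm{q}|$.
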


\begin{proof}
	Let $\Phi\in \ideal{I}$ be a name of a partial continuous function from $\cantor$ to~$\baire$. We first consider totality. Let~$D$ be the set of Cohen conditions $\sigma\in 2^{<\omega}$ that decide if $\Phi(x_G)$ is total or not. This set~$D$ of conditions is dense. By \cref{lem:Cohen_sits_inside_I}, there is some $\bm{p}\in G$ with $\mindom{p}\in D$. If $\mindom{p}$ forces that $\Phi(x_G)$ is not total then we are done. Suppose otherwise; then $\dom \Phi$ is comeagre in $[\mindom{p}]$. 

	We show that $\bm{p}$ forces that~$y_G$ dominates $\Phi(x_G)$. Let $\bm{q}$ be any extension of~$\bm{p}$. 
	By passing to an extension, we may assume that $|\Phi(\mindom{q})|\ge |\stm{q}|$. We define a name~$g$ of a partial continuous function by letting, for all $\sigma\succeq \mindom{q}$,

	 \[
	 |g(\sigma)| = \min \left\{ |\fun{q}(\sigma)|, |\Phi(\sigma)|    \right\},
	 \]
	 and for $m<|g(\sigma)|$,
	 \[
	 g(\sigma,m) = 
	 	\begin{cases*}
	 		\stm{q}(m), & if $m<|\stm{q}|$; and \\
	 		\max\{ \fun{q}(\sigma,m), \Phi(\sigma,m) \}\, & otherwise. 
	 	\end{cases*}
	 \]
	 Then~$g$ is monotone ($\sigma\preceq \sigma'$ implies $g(\sigma)\preceq g(\sigma')$). Since both $\dom \Phi$ and $\dom \fun{q}$ are comeagre in $[\stm{q}]$, so is $\dom g$. By our assumption that $|\Phi(\mindom{q})|\ge |\stm{q}|$, we have $|g(\mindom{q})|\ge |\stm{q}|$, and by definition, $\stm{q} \ge g(\mindom{q})$. Since $\stm{q} \ge \fun{q}(\mindom{q})$, for all~$\sigma$, $g(\sigma)\ge \fun{q}(\sigma)$. It follows that $\bm{r}= (\mindom{q},\stm{q},g,\bad{q})$  is a condition that extends~$\bm{q}$, and it strongly forces that $g(x_G) = \fun{r}(x_G)$ dominates $\Phi(x_G)$. By \cref{lem:iterated_forcing:the_Hechler_part_is_total}, $\bm{r}$ forces that $y_G$ dominates $\Phi(x_G)$.
\end{proof}

Now \cref{cor:iterated:first_coordinate_is_Cohen,lem:iterated_forcing:domination,lem:truss} imply:

\begin{corollary} \label{cor:iterated_forcing:in_SME}
	If $G\subset \iter$ is sufficiently generic, then $(x_G,y_G)\in \SME{\ideal{I}}$. 
\end{corollary}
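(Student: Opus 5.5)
The corollary will follow by chaining the three cited results; the only care needed is in how the pair $(x_G,y_G)$ relates to $\ideal{I}$-computation.

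First, by \cref{cor:iterated:first_coordinate_is_Cohen}, for sufficiently generic $G$ the real $x_G$ is $\ideal{I}$-Cohen. Second, by \cref{lem:iterated_forcing:domination}, $y_G$ dominates every function in $\ideal{I}[x_G]$, so $y_G$ is $\ideal{I}[x_G]$-dominating. Since only countably many dense sets are involved, a single sufficiently generic $G$ satisfies both conclusions at once.

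We then apply \cref{lem:truss} with $x=x_G$ and $y=y_G$, which gives $y_G\in\SME{\ideal{I}}$. That is, $y_G$ $\ideal{I}$-computes a meagre set containing all meagre sets with names in $\ideal{I}$.

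Finally, we pass from $y_G$ to the pair. The pair $(x_G,y_G)$, identified with $x_G\oplus y_G$, computes $y_G$, and $\ideal{I}$-computability is transitive. Hence any meagre set that is $\ideal{I}$-computable from $y_G$ is also $\ideal{I}$-computable from $(x_G,y_G)$, so $(x_G,y_G)\in\SME{\ideal{I}}$.

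There is no real obstacle here; the substantive work is in \cref{lem:iterated_forcing:domination} and the cited \cref{lem:truss}. The one point to state explicitly is that membership in $\SME{\ideal{I}}$ is upward closed under $\le_{\T(\ideal{I})}$, which is what justifies this last step.
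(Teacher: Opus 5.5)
Your proposal is correct and matches the paper's proof, which obtains the corollary by chaining \cref{cor:iterated:first_coordinate_is_Cohen}, \cref{lem:iterated_forcing:domination} and \cref{lem:truss}. Your remark that $\SME{\ideal{I}}$ is upward closed under $\le_{\T(\ideal{I})}$, which carries membership from $y_G$ to the pair $(x_G,y_G)$, makes explicit a step the paper leaves implicit.
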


\begin{remark} 
	To prove \cref{lem:iterated_forcing:domination} we can break up the 2-step iteration into its two steps. The first is essentially \cref{lem:Cohen_sits_inside_I}. For the second, fixing an $\ideal{I}$-Cohen real~$x$, we can show that $\hechlerb(\ideal{I}[x])$ is the collection of triples $\bm{p}(x) = (\stm{p},\fun{p}(x), \bad{p}(x))$ for $\bm{p}\in \iter(\ideal{I})$ (where $\bad{p}(x) = \left\{ \tau  \,:\,  (\exists \sigma\prec x)\,\,(\sigma,\tau)\in \bad{p} \right\}$). We can argue that if $G\subset \iter(\ideal{I})$ is sufficiently generic, then the collection of conditions $\bm{p}(x_G)$ for $\bm{p}\in G$ is a generic filter for $\hechlerb(\ideal{I}[x_G])$. We can then quote \cref{prop:Hechler_with_bad_sets:dominating} to show that $y_G$ is $\ideal{I}[x_G]$-dominating. This is perhaps thematically more tidy, but the direct proof of \cref{lem:iterated_forcing:domination} is sufficiently simple. 
\end{remark}

\subsection{Separating $\SNE{\ideal{I}}$ from $\SME{\ideal{I}}$}
	
What is left to show now is that if $G$ is sufficiently generic, then $(x_G, y_G)$ does not $\ideal{I}$-compute any $\ideal{I}$-random.
	
\begin{proposition}\label{thm:iterated_forcing:no_random}
	If $\ideal{I}$ is closed under $\lhyp$, and $G\subset \iter(\ideal{I})$ is sufficiently generic, then $\ideal{I}[x_G,y_G]$ contains no $\ideal{I}$-random real. 
\end{proposition}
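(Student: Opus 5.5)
We follow the proof of \cref{prop:Hechler:no_randoms}, replacing strings by pairs and using \cref{lem:iterated:forcing_open} and \cref{cor:C_p_is_very_big} in place of \cref{lem:Hechler:forcing_open}.

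Fix an $\ideal{I}$-computable name $\Phi$ of a partial continuous function from $\cantor\times\baire$ to $\cantor$, and a condition $\bm{p}\in\iter$. For each $n$ let $A_n$ be the set of pairs $(\sigma,\tau)$ with $|\Phi(\sigma,\tau)|\ge n$. This is an upwards closed name in $\ideal{I}$ of an open set. If for some $n$ we have $E_{\bm{p}}\nsubseteq\cl(A_n)$, then by \cref{lem:iterated:forcing_open}(2) some extension of $\bm{p}$ strongly forces that $\Phi(x_G,y_G)$ is not total, and we are done. So assume that $E_{\bm{p}}\subseteq\cl(A_n)$ for all $n$; we show that $\bm{p}$ forces that $\Phi(x_G,y_G)$ is not $\ideal{I}$-random.

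For $\nu\in 2^{<\omega}$ let $V_\nu=\{(\sigma,\tau):\nu\preceq\Phi(\sigma,\tau)\}$. Fix $X\in\ideal{I}$ computing $\fun{p}$, $\bad{p}$ and $\Phi$. Then $C_{\bm{p}}$ is $X$-computable, and by \cref{prop:closure_of_Pi11_sets_of_pairs} the sets $\cl(V_\nu)$ and $\cl(\bad{p})$ are uniformly $\Pi^1_1(X)$. For $(\sigma,\tau)\in C_{\bm{p}}$ define
\[
P_{\sigma,\tau} = \{\nu : (\sigma,\tau)\in\cl(V_\nu)\}
\]
if $(\sigma,\tau)\notin\cl(\bad{p})$, and $P_{\sigma,\tau}=2^{<\omega}$ otherwise. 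Exactly as before, these sets are uniformly $\Pi^1_1(X)$: enumerate into $P_{\sigma,\tau}$ until $(\sigma,\tau)$ is seen to enter $\cl(\bad{p})$, and then add everything.

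Each $P_{\sigma,\tau}$ is infinite. If $(\sigma,\tau)\in E_{\bm{p}}$, then for every $n$ the set $A_n$ is $\omega$-big above $(\sigma,\tau)$. Now $A_n=\bigcup_{|\nu|=n}V_\nu$ is a finite union, so by \cref{prop:closure_operator_for_pairs}(3) some $V_\nu$ with $|\nu|=n$ is $\omega$-big above $(\sigma,\tau)$.

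The step that needs care is the uniformity over the countably many pairs. Since $\omega_1^X$ is admissible, there is an $X$-computable stage by which every $P_{\sigma,\tau}$ has been seen to be infinite. This yields a uniformly $\Delta^1_1(X)$ array of infinite sets $R_{\sigma,\tau}\subseteq P_{\sigma,\tau}$, and by $\lhyp$-closure the array lies in $\ideal{I}$. From it we build, computably in the array, a set $S\subseteq 2^{<\omega}$ of finite weight meeting every $R_{\sigma,\tau}$ infinitely often. To do this, enumerate the countably many requirements $(\sigma,\tau,k)$ and for the $j$-th one pick an element of $R_{\sigma,\tau}$ of length at least $j+k+1$. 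The set $S$ is then a Solovay test in $\ideal{I}$, so any real with infinitely many initial segments in $S$ is not $\ideal{I}$-random.

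It remains to show density. Let $\bm{q}\le\bm{p}$ and $n<\omega$. By \cref{lem:iterated_forcing:characterisation_of_C_p}, $(\mindom{q},\stm{q})\in E_{\bm{q}}\subseteq E_{\bm{p}}$. Choose $\nu\in S\cap P_{\mindom{q},\stm{q}}$ with $|\nu|\ge n$. Then $V_\nu$ is $\omega$-big above $(\mindom{q},\stm{q})$. By \cref{cor:C_p_is_very_big} applied to $\bm{q}$, the set $V_\nu\cap E_{\bm{q}}$ is $\omega$-big, hence nonempty; pick $(\sigma',\tau')$ in it. By \cref{lem:iterated_forcing:characterisation_of_C_p}, $(\sigma',\tau',\fun{q},\bad{q})$ is an extension of $\bm{q}$, and it strongly forces $\nu\prec\Phi(x_G,y_G)$. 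Hence $\bm{p}$ forces that $\Phi(x_G,y_G)$ has infinitely many initial segments in $S$, and so is not $\ideal{I}$-random.

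The main obstacle is the admissibility and uniformity step. We must check that the rank analysis of \cref{prop:closure_of_Pi11_sets_of_pairs} still gives a common bounded stage by which all the $\Pi^1_1(X)$ sets $P_{\sigma,\tau}$ are seen to be infinite. The "somewhere dense" clause in the definition of rank adds a quantifier, but the analysis stays within $L_{\omega_1^X}[X]$, so the bound should still hold.
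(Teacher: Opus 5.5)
Your proposal is correct and is essentially the paper's own proof. The paper likewise transfers the argument for Hechler forcing with bad sets to $\iter$ via the same dichotomy: either some $A_n$ fails to be $\omega$-big somewhere on $E_{\bm{p}}$, or one builds a finite-weight Solovay test from the uniformly $\Pi^1_1(X)$ sets $P_{\sigma,\tau}$ and gets density from \cref{cor:C_p_is_very_big}. The worry in your last paragraph is unnecessary: the common stage follows from $\Sigma_1$-bounding in $L_{\omega_1^X}[X]$, applied to the total map sending $(\sigma,\tau,n)$ to the first stage at which $P_{\sigma,\tau}$ is seen to contain a string of length at least $n$, and this needs only the uniform $\Pi^1_1(X)$-ness of $\cl$ from \cref{prop:closure_of_Pi11_sets_of_pairs}, not any further rank analysis.
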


\begin{proof}
 The construction of \cref{prop:Hechler:no_randoms} translates to the forcing~$\iter$ without the need for any new ideas. We let $\Phi\in \ideal{I}$ be a name for a partial continuous function from $\cantor\times \baire$ to~$\cantor$. For $m<\omega$ we let $A_m = \left\{ (\sigma,\tau)  \,:\,  |\Phi(\sigma,\tau)|\ge m \right\}$. Given $\bm{p}\in \iter$, we assume that it forces that $\Phi(x_G,y_G)$ is total. So by \cref{lem:iterated:forcing_open}, each $A_m$ is $\omega$-big above every $(\sigma,\tau)\in E_{\bm{p}}$. We again fix $X\in \ideal{I}$ that computes $\Phi$, $C_{\bm{p}}$ and $\bad{p}$. For $\nu\in 2^{<\omega}$ we let $V_\nu = \left\{ (\sigma,\tau)  \,:\,  \nu\preceq \Phi(\sigma,\tau) \right\}$. For each $(\sigma,\tau)\in C_{\bm{p}}$ we define $P_{\sigma,\tau}$ as above: it is $2^{<\omega}$ if $(\sigma,\tau)\in \cl(\bad{p})$, otherwise it is the collection of~$\nu$ such that $V_{\nu}$ is $\omega$-big above $(\sigma,\tau)$. Working in the smallest admissible set containing~$X$, we can uniformly enumerate the sets $P_{\sigma,\tau}$; again, they are each infinite, so at some $X$-computable stage we see that they are all infinite, and we can obtain the finite-weight $S\in \ideal{I}$ that has infinite intersection with each $P_{\sigma,\tau}$. This is a Solovay test in~$\ideal{I}$ and $\bm{p}$ forces that it captures $\Phi(x_G,y_G)$. 
\end{proof}

Together with \cref{cor:iterated_forcing:in_SME}, and the fact that strong null engulfing reals compute randoms, we obtain the promised second half of \cref{thm:main}:

\begin{theorem}
	If $\ideal{I}$ is closed under $\lhyp$ then $\SNE{\ideal{I}}\subsetneq \SME{\ideal{I}}$.
\end{theorem}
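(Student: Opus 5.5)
The plan is to assemble results already established. The inclusion $\SNE{\ideal{I}}\subseteq \SME{\ideal{I}}$ holds for every Turing ideal, by the effective morphism from $\spill{\ideal{N}}$ to $\spill{\ideal{M}}$ recalled in the introduction. So it remains to produce an oracle in $\SME{\ideal{I}}\setminus\SNE{\ideal{I}}$.

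Since $\ideal{I}$ is countable and $\iter(\ideal{I})$ has only countably many relevant dense sets, we can fix a sufficiently generic filter $G\subset \iter(\ideal{I})$. Let $z = x_G\oplus y_G$, coding the pair $(x_G,y_G)$ as a single element of Baire space. The relevant dense sets are those used in \cref{lem:iterated_forcing:density_of_long_stems}, \cref{lem:Cohen_sits_inside_I}, \cref{lem:iterated_forcing:domination} and \cref{thm:iterated_forcing:no_random}, indexed by names in $\ideal{I}$.

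By \cref{cor:iterated_forcing:in_SME}, $z\in\SME{\ideal{I}}$. This corollary in turn rests on two facts: $x_G$ is $\ideal{I}$-Cohen (\cref{cor:iterated:first_coordinate_is_Cohen}), and $y_G$ is $\ideal{I}[x_G]$-dominating (\cref{lem:iterated_forcing:domination}). These combine via \cref{lem:truss}, whose witness $y_G$ is $\ideal{I}$-computable from $z$.

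On the other side, by \cref{thm:iterated_forcing:no_random}, which is where closure of $\ideal{I}$ under $\lhyp$ is used, $\ideal{I}[x_G,y_G]=\ideal{I}[z]$ contains no $\ideal{I}$-random real. Suppose toward a contradiction that $z\in\SNE{\ideal{I}}$. The effective morphism from $\spill{\ideal{N}}$ to $\capture{\ideal{N}}$ (\cite[Proposition 3.9]{GKT}) then yields an $\ideal{I}$-random real that is $\ideal{I}$-computable from $z$, which is impossible. Hence $z\notin\SNE{\ideal{I}}$, and the inclusion is strict.

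The only point needing care is bookkeeping rather than a real obstacle: \cref{lem:truss} and the morphism facts are stated for single reals, so we should check that $\ideal{I}$-computability from the pair agrees with $\ideal{I}$-computability from $z$. Both notions amount to computability from $x_G\oplus y_G\oplus w$ for some $w\in\ideal{I}$, so they coincide. We should also check that a single generic filter $G$ meets all the countably many dense sets at once, which holds because $\ideal{I}$ is countable.
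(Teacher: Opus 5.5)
Your proposal is correct and is essentially the paper's own argument. The inclusion comes from the effective morphism $\spill{\ideal{N}}\to\spill{\ideal{M}}$. For strictness, a sufficiently generic $G\subset\iter(\ideal{I})$ gives $(x_G,y_G)\in\SME{\ideal{I}}$ by \cref{cor:iterated_forcing:in_SME}, and \cref{thm:iterated_forcing:no_random}, combined with the fact that strongly null-engulfing oracles $\ideal{I}$-compute $\ideal{I}$-random reals, gives $(x_G,y_G)\notin\SNE{\ideal{I}}$. Your extra bookkeeping (identifying the pair with $x_G\oplus y_G$, and meeting the countably many dense sets at once) is sound and just makes explicit what the paper leaves implicit.
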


	
	\bibliographystyle{plain}

\end{document}